\theoremstyle{plain}
\newtheorem{theorem}{Theorem}[section]
\newtheorem{corollary}[theorem]{Corollary}
\newtheorem{proposition}[theorem]{Proposition}
\newtheorem{lemma}[theorem]{Lemma}
\newtheorem{remark}[theorem]{Remark}
\numberwithin{equation}{section}
\numberwithin{equation}{section}
\def\XXint#1#2#3{{\setbox0=\hbox{$#1{#2#3}{\int}$}
\vcenter{\hbox{$#2#3$}}\kern-.5\wd0}}
\begin{document}
\title[Bifurcation diagrams with singular weights in two dimensions]{Bifurcation diagrams for semilinear elliptic equations with singular weights in two dimensions}
\author{Kenta Kumagai}
\address{Department of Mathematics, Tokyo Institute of Technology}
\thanks{This work was supported by JSPS KAKENHI Grant Number 23KJ0949}
\email{kumagai.k.ah@m.titech.ac.jp}
\date{\today}

\begin{abstract}
We consider the bifurcation diagram of radial solutions for the Gelfand problem with a positive radially symmetric weight in the unit ball. We deal with the exponential nonlinearity and a power-type nonlinearity. When the weight is constant, it is well-known that the bifurcation curve exhibits three different types depending on the dimension and the exponent of power for higher dimensions, while the curve exhibits only one type in two dimensions. 

In this paper, we succeed in realizing in two dimensions a phenomenon such that the bifurcation curve exhibits all of the three types, by choosing the weight appropriately. In particular, to the best of the author's knowledge, it is the first result to establish in two dimensions the bifurcation curve having no turning points.
\end{abstract}
\keywords{Semilinear elliptic equation, Bifurcation diagram, Two dimensions, Turning points, Stability}
    \subjclass[2020]{Primary 35J61, 35B32; Secondary 35J25, 35B35}

\maketitle

\section{Introduction}
Let $N=2$ and $B_1\subset \mathbb{R}^N$ be the unit ball. We consider the bifurcation diagram of radial solutions for the semilinear elliptic problem
\begin{equation}
\label{gelfand}
-\Delta u=\lambda V_k(|x|)f(u) \hspace{2mm} \text{in } B_1, \hspace{4mm}
u>0 \hspace{2mm}\text{in } B_1, \hspace{4mm}
u=0  \hspace{2mm}\text{on } \partial B_1,
\end{equation}
where $\lambda>0$ is a parameter and
\begin{equation}
\label{pote}
    V_k(r):=\frac{1}{r^2 (-\log (r/e))^{2+k}} \hspace{4mm} \text{with $k>0$}.
\end{equation}
In this paper, we deal with the following two types of the nonlinearities $f(u)=e^u$ and $f(u)=(1+u)^p$ with $p>k+1$.
\subsection{Classical case: \texorpdfstring{$V_k=1$}{LG}}
\label{a1}
In this case, by the symmetric result of Gidas, Ni, and Nirenberg \cite{Gidas}, every solution of \eqref{gelfand} is radially symmetric. Moreover, it is known \cite{korman, korman2014, Mi2015} that the set of solutions of \eqref{gelfand} is an unbounded analytic curve emanating from $(0,0)$ and described as $\{(\lambda(\alpha), u(r, \alpha)); \alpha>0\}$, where $u(r,\alpha)$ is the solution satisfying $\lVert u \rVert_{L^{\infty}(B_1)}=u(0)=\alpha$. We call this set $\{(\lambda(\alpha),\alpha); \alpha>0\}$ the bifurcation curve. A celebrated result of Joseph and Lundgren \cite{JL} states that the bifurcation curve exhibits the following three different types depending on the dimension $N$ and the exponent of power.
\begin{itemize}
    \item[{($0$)}] The curve emanating from $(0,0)$ goes to $\lambda=\lambda^{*}$ with some $\lambda^{*}>0$, bends back at $\lambda^{*}$, and then converges to $\lambda=0$ monotonically as $\alpha\to\infty$.
    \item[{(I)}] The curve emanating from $(0,0)$ turns infinitely many times around $\lambda=\lambda_{*}$ with some $\lambda_{*}>0$. In addition, the curve converges to $\lambda=\lambda_{*}$ as $\alpha\to\infty$.
    \item[{(II)}] The curve emanating from $(0,0)$ monotonically converges to $\lambda=\lambda_{*}$ with some $\lambda_{*}>0$ as $\alpha\to\infty$.
\end{itemize}
More precisely, they proved by using the Emden-Fowler transformation and a phase plane analysis that when $f(u)=e^u$,
the bifurcation diagram is of Type $0$ if $N=2$, of Type I if $3\le N\le 9$, and of Type II if $10\le N$. On the other hand, they proved that when $f(u)=(1+u)^q$, 
the bifurcation diagram is of Type 0 if $1<q\le q_c$, of Type I if $q_c<q<q^{+}_{\mathrm{JL}}$, and of Type II if $q^{+}_{\mathrm{JL}}\le q<\infty$, where 
\begin{equation*}
q_{c}=\begin{cases}
   \frac{N+2}{N-2} &\text{if $N\ge 3$,}\\
    \infty&\text{if $N=2$,}
\end{cases}
\hspace{4mm}\text{and} \hspace{4mm}q^{+}_{\mathrm{JL}}=
\begin{cases}
    1+\frac{4}{N-4-2\sqrt{N-1}}&\text{if $N\ge 11$,}\\
    \infty&\text{if $N\le  10$.}
\end{cases}
\end{equation*}
Here, we say that $(\lambda, U)$ is a radial singular solution of \eqref{gelfand} if $U\in C^2(0,1]$ satisfies \eqref{gelfand} and $U(r)\to \infty$ as $r\to 0$. Then,
for the case $N\ge 3$, they proved that there exists a singular solution 
\begin{equation*}
    (\lambda_{*},U_{*})=
    \begin{cases}
         (2(N-2),-2\log|x|) &\text{if $f(u)=e^u$,}\\
    (\theta(N-2-\theta),|x|^{-\theta}-1)&\text{if $f(u)=(1+u)^q$ with $q>q_s$}
    \end{cases}
\end{equation*}
with $q_s:=\frac{N}{N-2}$ and $\theta:=\frac{2}{q-1}$ such that the bifurcation curve converges to $(\lambda_{*}, U_{*})$ when $f(u)=e^u$ with $N\ge 3$ or $f(u)=(1+u)^q$ with $q>q_c$. Moreover, Brezis and V\'azquez \cite{Br} studied the stability of $U_{*}$ by using the Hardy inequality. Here, we mean that $U_{*}$ is stable if for any $\xi\in C^{0,1}_{0}(B_1)$, it follows that
\begin{align}
\label{sta}
Q_{U_{*}}(\xi):=\int_{B_1}|\nabla\xi|^2\,dx-\int_{B_1}\lambda V_k(|x|) f'(U_{*})\xi^2\,dx\ge 0.
\end{align}
As a result, they proved that when $f(u)=e^u$, $U_{*}\in H^1(B_1)$ is always satisfied. Moreover, $U_{*}$ is stable if and only if $N\ge 10$. On the other hand, when $f(u)=(1+u)^q$ with $q>q_s$, $U_{*}\in H^1(B_1)$ is satisfied if and only if $q_c<q$. Moreover, $U_{*}$ is stable if and only if $q\le q^{-}_{\mathrm{JL}}$ or $q\ge q^{+}_{\mathrm{JL}}$, where $q^{-}_{\mathrm{JL}}=1+\frac{4}{N-4+2\sqrt{N-1}}$. Here, we remark that $1<q_{s}<q^{-}_{\mathrm{JL}}<q_{c}<q^{+}_{\mathrm{JL}}$. In addition, they showed in \cite{Br} the following important relation between the stability of singular solutions and the bifurcation structure for all non-negative non-decreasing and convex nonlinearities $f$: if a stable singular solution $U\in H^1(B_1)$ of \eqref{gelfand} exists with some $\lambda$, then the bifurcation diagram is of Type II. From this result and the stability analysis stated above, we can also confirm that the bifurcation diagram is of Type II if $f(u)=e^u$ with $N\ge 10$ or $f(u)=(1+u)^q$ with $q\ge q^{+}_{\mathrm{JL}}$. Then, there have been many studies trying to study the bifurcation structure and some properties of singular solutions for various nonlinearities $f$. See \cite{Nor,chend,chen,Flore,Marius,Guowei,Kiwei,Luo,Lin,Merle,Mi2014,Mi2015,Mi2018,Mi2020,Mi2023}.
%Moreover, we mention that Miyamoto \cite{Mi2014} expected that the number of turning points is equal to $m(U)$ for some class of supercritical nonlinearities $f$ including $f(u)=e^u$ and $f(u)=(1+u)^p$ when $N\ge 3$. 

%Moreover, we mention that Miyamoto \cite{Mi2014} expected that the number of turning points is equal to $m(U)$ for some class of supercritical nonlinearities $f$ including $f(u)=e^u$ and $f(u)=(1+u)^p$ when $N\ge 3$. 
 
 %The singular solution plays a crucial role in the bifurcation structure. Indeed, if $N\ge 3$, Miyamoto and Naito \cite{Mi2023} showed that for a larger class of supercritical nonlinearities $f$, there exists the unique singular solution $U\in H^1(B_1)$ and the bifurcation curve converges to the singular solution. In addition, from the results \cite{Br, Mi2023}, we deduce that $m(U)=0$ if and only if (II) is satisfied for convex nonlinearities in the class. We also verify from this fact and the stability analysis by \cite{Br} that (II) is satisfied if $f(u)=e^u$ with $N\ge 10$ and $f(u)=(1+u)^q$ with $q>q_{\mathrm{JL}}$. Moreover, Miyamoto \cite{Mi2014} expected that the number of turning points is equal to $m(U)$ for the class of $f$. For the details for \textbf{KAKIKATA}

In contrast to the case $N\ge 3$, the bifurcation curve exhibits only Type 0 in two dimensions when $f(u)=e^u$ or $f(u)=(1+u)^q$. Moreover, until recently, 
the nonlinearitiy $f$ for which the bifurcation curve exhibits Type I had not been confirmed when $N=2$. Recently, Naimen \cite{Naimen} proved the oscillation of the bifurcation curve for a class of nonlinearities including $f(u)=e^{u^p}$ with $p>2$ when $N=2$. Moreover, the author \cite{kuma2024} proved that the curve has infinitely many turning points for general supercritical nonlinearities in the sense of the Trudinger-Morser imbedding when $N=2$. We also remark that the oscillation phenomenon is confirmed in \cite{Bou} for a cleverly set problem in two dimensions. However, it is not guaranteed in \cite{Bou,kuma2024,Naimen} that $\lambda(\alpha)\to \lambda_{*}$ for some $\lambda_{*}>0$.
Moreover, it is known by \cite{cabrecapella} that the bifurcation curve does not exhibit Type II for any non-negative non-decreasing nonlinearities in two dimensions in the unweighted case. 

Our motivation is to realize in two dimensions a phenomenon such that the bifurcation curve exhibits all of the three types obtained in \cite{JL}, by choosing the weight $V_k$ appropriately. 
\subsection{Weighted case}
Motivated by the classical case, we try to consider the bifurcation diagram of radial solutions for \eqref{gelfand} with the weight $V_k$ satisfying \eqref{pote}.
Here, we say that a pair $(\lambda, u)$ is a radial solution of \eqref{gelfand} if $u\in C_{\mathrm{rad}}^0[0,1]\cap C^2(0,1]$ and $u$ satisfies \eqref{gelfand}.
We note that the condition $k>0$ is natural: there exists no radial solution if $k\le 0$ (see Lemma \ref{nlem}). For the weighted case in two dimensions, we find the following exponents corresponding to $q_{s}$, $q_{c}$, and $q^{\pm}_{\mathrm{JL}}$ stated above:
\begin{equation*}
    p_s=k+1,\hspace{0.5mm} p^{-}_{\mathrm{JL}}= \frac{2k}{1-k+\sqrt{k(k+2)}},\hspace{0.5mm} p_c=2k+1,\hspace{0.5mm}p^{+}_{\mathrm{JL}}=
  \begin{cases}
    \frac{2k}{1-k-\sqrt{k(k+2)}} & \text{\hspace{-1.5mm}$k< \frac{1}{4}$}, \\
     \infty  & \text{\hspace{-1.5mm}$k\ge \frac{1}{4}$}.
  \end{cases}
\end{equation*}
Here, we remark that $1<p_s<p^{-}_{\mathrm{JL}}<p_c<p^{+}_{\mathrm{JL}}$.

We first obtain the following theorems by
using the specific changes of variables used in \cite{korman2014,Kuma} for $f(u)=e^u$ and used
in \cite{korman2018} for $f(u)=(1+u)^p$. 
%that the radial solution set is a unbounded analytic curve emanating from $(0,0)$ by 
\begin{theorem}
\label{diagramth}
Let $N=2$, $k>0$, $f(u)=e^u$ and $V_k$ be that in \eqref{pote}. Then, the set of radial solutions of \eqref{gelfand} is described as
\begin{equation*}
\{\left(\lambda(\beta), u(r, \alpha(\beta))\right); \beta\in\mathbb{R}\}\hspace{4mm}\text{with}\hspace{4mm}\alpha(\beta)=\beta-\log \lambda(\beta),
\end{equation*}
where $\alpha(\beta):=\lVert u \rVert_{L^{\infty}(B_1)}=u(0)$. Every radial solution $u\in H^1_{0}(B_1)$ and $u$ satisfies \eqref{gelfand} in the weak sense. 
Moreover, $\mathcal{C}:=\{(\lambda(\beta), \alpha(\beta)); \beta\in \mathbb{R}\}$ is an unbounded analytic curve emanating from $(0,0)$ and there exists $0<\lambda^{*}<\infty$ such that $\lambda(\beta)\le \lambda^{*}$ for all $\beta$. 
\end{theorem}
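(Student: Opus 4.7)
The plan is to reduce the singular radial PDE to an autonomous second-order ODE whose phase portrait can be fully described, in the spirit of the Emden--Fowler-type changes of variables in \cite{korman2014,Kuma}. Writing \eqref{gelfand} in radial form as $-(ru')'=\lambda r V_k(r)e^u$ on $(0,1]$ with $u(1)=0$, I would first substitute $s=-\log(r/e)\in[1,\infty)$ to obtain $u_{ss}=-\lambda e^u/s^{2+k}$. A further substitution $\tau=\log s\in[0,\infty)$ together with the shift $\tilde{w}(\tau)=u(r)+\log\lambda-k\tau$ then yields the autonomous ODE
\begin{equation*}
\tilde{w}_{\tau\tau}-\tilde{w}_\tau+e^{\tilde{w}}-k=0,\qquad \tau\in[0,\infty),
\end{equation*}
with boundary data $\tilde{w}(0)=\log\lambda$ (from $u(1)=0$) and the asymptotic condition $\tilde{w}(\tau)+k\tau\to\beta$ as $\tau\to\infty$ (from boundedness of $u$ at the origin), where $\beta=\alpha+\log\lambda$ encodes $\alpha=u(0)$ via $\alpha=\beta-\log\lambda$. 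Conversely, any solution of this boundary value problem produces a radial solution of \eqref{gelfand}, so the task reduces to parameterizing the ODE solutions by $\beta$.

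Next I would analyze the planar autonomous system $\tilde{w}_\tau=v$, $v_\tau=v+k-e^{\tilde{w}}$, whose unique equilibrium is $(\log k,0)$ with characteristic polynomial $\mu^2-\mu+k=0$; it is an unstable node for $0<k\le 1/4$ and an unstable spiral for $k>1/4$. The energy $E(\tau)=\tfrac{1}{2}v^2+e^{\tilde{w}}-k\tilde{w}$ satisfies $E'=v^2\ge 0$ and is bounded below by $k(1-\log k)$. For each $\beta\in\mathbb{R}$ I would construct a unique solution $\tilde{w}_\beta$ on $[0,\infty)$ with the prescribed asymptotics by a contraction-mapping argument on a Volterra reformulation of the ODE over $[T,\infty)$ for $T$ large, extended to $[0,\infty)$ by standard ODE continuation. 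Translation invariance then gives $\tilde{w}_\beta(\tau)=\tilde{w}_0(\tau-\beta/k)$ for a fixed reference trajectory $\tilde{w}_0$. The crucial structural step is extending $\tilde{w}_0$ backward to all of $\mathbb{R}$: monotonicity of $E$ gives $E(\tau)\le E(0)$ for $\tau\le 0$, which forces $(\tilde{w}_0,v_0)$ to remain in a compact subset of the phase plane and rules out finite-time blow-up backward; a LaSalle-type argument on the trapped trajectory then shows $(\tilde{w}_0(\tau),v_0(\tau))\to(\log k,0)$ as $\tau\to-\infty$.

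With $\tilde{w}_0$ defined on $\mathbb{R}$, I set $\lambda(\beta):=e^{\tilde{w}_0(-\beta/k)}$ and $\alpha(\beta):=\beta-\log\lambda(\beta)$. Analytic dependence of ODE solutions on the time variable makes $\beta\mapsto(\lambda(\beta),\alpha(\beta))$ analytic, and injectivity is immediate from $\beta=\alpha+\log\lambda$. The bound $\lambda(\beta)\le\lambda^*:=\sup_{\mathbb{R}}e^{\tilde{w}_0}<\infty$ holds because $\tilde{w}_0$ is continuous with limit $\log k$ at $-\infty$ and $-\infty$ at $+\infty$. As $\beta\to-\infty$, the asymptotic expansion $\tilde{w}_0(\tau)=-k\tau+O(e^{-k\tau})$ at $\tau=+\infty$ yields $\log\lambda(\beta)=\beta+o(1)$, so $(\lambda(\beta),\alpha(\beta))\to(0,0)$, confirming that $\mathcal{C}$ emanates from the origin. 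As $\beta\to+\infty$, $\tilde{w}_0(-\beta/k)\to\log k$, so $\lambda(\beta)\to k$ while $\alpha(\beta)\to\infty$, giving the unboundedness.

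To finish, the transformation gives $u\in C^0[0,1]\cap C^2(0,1]$ directly from the regularity of $\tilde{w}_0$. Changing variables in the Dirichlet integral yields
\begin{equation*}
\int_{B_1}|\nabla u|^2\,dx=2\pi\int_0^{\infty}u_\tau^2\,e^{-\tau}\,d\tau,
\end{equation*}
which is finite because $u_\tau=\tilde{w}_\tau+k$ is continuous on $[0,\infty)$ and tends to $0$ at $\infty$; hence $u\in H^1_0(B_1)$. The weak formulation then follows by integration by parts, using $V_ke^u\in L^1(B_1)$ and the $H^1_0$ regularity of $u$. I expect the main technical obstacle to be the global backward extension of $\tilde{w}_0$ and the identification of its limit at $\tau=-\infty$: although energy monotonicity rules out runaway behavior, pinning down convergence to $(\log k, 0)$ requires a careful global analysis of the planar system and a verification that the trajectory does not merely shadow level sets of $E$ away from the equilibrium.
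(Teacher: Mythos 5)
Your proposal is correct, but it takes a genuinely different route from the paper's own proof. The paper stays in the original radial variable: after the substitution $v=u+\log\lambda$, $\beta=\alpha+\log\lambda$, it proves the pointwise a priori bounds $v(r)\le k\log(-\log(r/e))+C_1$ and $-rv'(r)=\int_0^r sV_k(s)e^{v}\,ds$ (Lemma \ref{apriorileme}), from which unique solvability of the initial value problem \eqref{ev} via a Volterra integral equation in $r$, the $H^1$ regularity, and the uniform bound $\lambda(\beta)=e^{v(1,\beta)}\le e^{C_1}$ all follow at once (Corollary \ref{formeu}). You instead pass immediately to the autonomous Emden--Fowler equation, build the solution by contraction at $\tau=+\infty$, reduce to a single trajectory $\tilde w_0$ by translation invariance, and control it backward with the energy $E$ --- which is precisely the apparatus the paper reserves for Theorems \ref{singularth}(i) and \ref{mainthe}(i) (the $\beta$-independent trajectory $\hat w$, the Lyapunov function $\mathcal{L}$, the backward limit $\hat w\to 0$). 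What you gain is a unified picture and a cleaner derivation of analyticity and of the limit $(\lambda(\beta),\alpha(\beta))\to(0,0)$; what you pay is that the elementary conclusions of Theorem \ref{diagramth} are made to depend on global phase-plane analysis. In particular, the bound $\lambda(\beta)\le\lambda^*$ and the unboundedness of the curve only require $\tilde w_0$ to be bounded above, which energy monotonicity already yields; the convergence to $(\log k,0)$ as $\tau\to-\infty$ that you flag as the main obstacle is not needed for this theorem (it belongs to Theorem \ref{singularth}). One point to make explicit: to conclude that \emph{every} radial solution of \eqref{gelfand} appears in the parameterization, you need uniqueness of the solution in the full class satisfying $\tilde w+k\tau\to\beta$, not merely in the contraction ball; this requires an a priori argument showing any such solution eventually enters that ball, which is the role played in the paper by the integral identity of Lemma \ref{apriorileme}.
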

\begin{theorem}
\label{diagramthp}
Let $N=2$, $k>0$ and $V_k$ be that in \eqref{pote}. We assume that $f(u)=(1+u)^p$ with $p>p_s$. Then, there exists $\beta^{*}\in (0,\infty]$ depending only on $p$ such that 
the set of radial solutions of \eqref{gelfand} is described as
\begin{equation*}
\{\left(\lambda(\beta), u(r, \alpha(\beta))\right); \beta\in(0,\beta^{*})\}\hspace{4mm}\text{with}\hspace{4mm}\alpha(\beta)=\lambda(\beta)^{-\frac{1}{p-1}}\beta-1,
\end{equation*}
where $\alpha(\beta):=\lVert u \rVert_{L^{\infty}(B_1)}=u(0)$. Every radial solution $u\in H^1_{0}(B_1)$ and $u$ satisfies \eqref{gelfand} in the weak sense.
Moreover, $\mathcal{C}:=\{(\lambda(\beta), \alpha(\beta)); \beta\in (0,\beta^{*})\}$ is an unbounded analytic curve emanating from $(0,0)$  and there exists $0<\lambda^{*}<\infty$ such that $\lambda(\beta)\le \lambda^{*}$ for all $\beta$. In addition, $\beta^{*}=\infty$ if $p\ge p_c$ and $\beta^{*}<\infty$ if $p_s<p<p_c$.
\end{theorem}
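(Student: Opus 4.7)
My approach adapts the classical Joseph--Lundgren phase-plane method to this weighted two-dimensional setting. Writing the radial form of \eqref{gelfand} as $-(ru')' = \lambda r V_k(r)(1+u)^p$ and substituting $t := -\log r \in [0,\infty)$ (so that $-\log(r/e) = 1+t$) yields
\[ u_{tt} + \lambda \frac{(1+u)^p}{(1+t)^{2+k}} = 0, \qquad u(0) = 0,\ u(\infty) = \alpha. \]
Following the scaling in \cite{korman2018}, I then set $w(t) := \lambda^{1/(p-1)}(1 + u(t))$, which absorbs $\lambda$ into the parameter and produces the one-parameter equation
\[ w_{tt} + \frac{w^p}{(1+t)^{2+k}} = 0, \qquad w(0) = \lambda^{1/(p-1)},\ w(\infty) = \beta := \lambda^{1/(p-1)}(1+\alpha), \]
after which the asserted identity $\alpha = \lambda^{-1/(p-1)}\beta - 1$ is tautological. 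The problem thus reduces to a one-parameter shooting in $\beta$.

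For each $\beta > 0$, I construct $w(\cdot;\beta)$ as the unique solution of the integral equation $w(t) = \beta - \int_t^\infty (s-t)\, w(s)^p\, (1+s)^{-2-k}\,ds$ obtained by integrating the ODE twice from $+\infty$, via contraction on $[T,\infty)$ for large $T$ (the weight $(1+s)^{-2-k}$ is integrable at infinity thanks to $k > 0$), then extending backward in $t$ by ODE theory as long as $w > 0$. Concavity $w_{tt} \le 0$ together with $w_t(\infty) = 0$ forces $w_t > 0$, so $0 < w(0;\beta) < \beta$ whenever the extension reaches $t = 0$; setting $\lambda(\beta) := w(0;\beta)^{p-1}$ and $\alpha(\beta) := \beta/w(0;\beta) - 1 > 0$ then delivers an admissible solution of \eqref{gelfand}. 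Analytic dependence on initial data (applied to the integral formulation) gives real-analyticity of $\beta \mapsto (\lambda(\beta), \alpha(\beta))$. For $\beta$ small, $w \sim \beta$, so $\lambda(\beta) \sim \beta^{p-1} \to 0$ and $\alpha(\beta) \to 0$ as $\beta \to 0^+$, and $\mathcal{C}$ emanates from $(0,0)$. The a-priori bound $\lambda(\beta) \le \lambda^*$ follows by testing \eqref{gelfand} against the first Dirichlet eigenfunction of $-\Delta$ on $B_1$ and invoking $(1+u)^p \ge 1 + pu$ together with a positive lower bound on $V_k$ on $(0,1]$ (an elementary calculus exercise using $k>0$). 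Finally, $u \in H_0^1(B_1)$ and the weak formulation follow from $\int_{B_1} |\nabla u|^2\,dx = 2\pi \int_0^\infty u_t^2\,dt < \infty$, the integrability being a consequence of the exponential decay of $w_t$ on the stable manifold discussed below.

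The dichotomy for $\beta^* := \sup\{\beta > 0 : w(\cdot;\beta) > 0 \text{ on } [0,\infty)\}$ is the analytic heart of the theorem. Applying the Emden--Fowler transformation $\tau := \log(1+t)$ and $W(\tau) := (1+t)^{-b} w(t)$ with $b := k/(p-1) \in (0,1)$ recasts the non-autonomous ODE as the autonomous system
\[ W'' + (2b-1)\, W' + b(b-1)\, W + W^p = 0. \]
For $p > p_s$, the origin is a saddle with eigenvalues $-b < 0$ and $1 - b > 0$; the boundary condition $w(\infty) = \beta$ selects the one-dimensional stable manifold, and trajectories for different $\beta$ are time-translates of the canonical one $W_c$, with $w(0;\beta) > 0$ equivalent to $W_c$ staying positive past a $\beta$-dependent base point. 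The other equilibrium $W_* := (b(1-b))^{1/(p-1)} > 0$ is the scaled singular solution. The decisive tool is the energy
\[ E(\tau) := \tfrac12 (W')^2 + \tfrac12 b(b-1) W^2 + \tfrac{1}{p+1} W^{p+1}, \]
which satisfies $E'(\tau) = -(2b-1)(W')^2$. When $p \ge p_c$, i.e. $2b-1 \le 0$, $E$ is non-decreasing; since $E(\infty) = 0$, one has $E \le 0$ in backward time, and since $E \ge (W')^2/2 \ge 0$ on $\{W = 0\}$, the trajectory cannot cross $\{W = 0\}$, giving $\beta^* = \infty$. When $p_s < p < p_c$, i.e. $2b-1 > 0$, $E$ is non-increasing with $E \ge 0$ in backward time; combined with $E(W_*,0) < 0$ (so the backward trajectory cannot converge to $W_*$) and a LaSalle-type argument ruling out a bounded backward trajectory, this forces $W_c$ to hit $\{W = 0\}$ at some finite $\tau_0 < 0$, whence $\beta^* = e^{-b\tau_0} < \infty$. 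Unboundedness of $\mathcal{C}$ is then immediate, since $\alpha(\beta) \to \infty$ as $\beta \to \beta^{*-}$ in either case. \emph{The main technical obstacle} is this second half of the dichotomy: ruling out pathological asymptotics of $W_c$ (for example indefinite spiraling around $W_*$ without ever reaching $\{W=0\}$) for $p_s < p < p_c$ requires careful phase-plane barrier arguments against the singular-solution equilibrium $W_*$, and it is precisely here that the critical exponent $p_c = 2k + 1$ enters essentially.
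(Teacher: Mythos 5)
Your overall route coincides with the paper's: the same substitution $v=\lambda^{1/(p-1)}(1+u)$ reducing the problem to a one-parameter shooting in $\beta$, the same Emden--Fowler change of variables $\tau=\log(-\log(r/e))$, $W=(-\log(r/e))^{-k/(p-1)}v$ (the paper normalizes additionally by the singular solution and shifts, writing $\hat w=(\theta(1-\theta))^{-\theta/k}W-1$, but the autonomous equation and the Lyapunov function are the same up to this affine change), and the same energy identity $E'=-(2b-1)(W')^2$ driving the dichotomy at $p_c=2k+1$. Your treatment of $p\ge p_c$ (namely $E\le 0$ along the stable-manifold orbit in backward time versus $E=\tfrac12 (W')^2\ge 0$ on $\{W=0\}$) is sound and is essentially Proposition \ref{importantprop} (i), (iii).

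The gap is in the case $p_s<p<p_c$. Your argument that $W_c$ must reach $\{W=0\}$ at finite backward time excludes only two of the three alternatives: convergence to $(W_*,0)$ (ruled out by $E\ge 0>E(W_*,0)$) and a bounded backward orbit avoiding $\{W=0\}$ (ruled out by LaSalle together with the fact that $E\equiv 0$ forces $W'\equiv 0$). It does not exclude an \emph{unbounded} backward trajectory remaining in $\{W>0\}$: when $2b-1>0$ the inequality $E\ge 0$ gives no upper bound on $W$, and the crude bound $v\le\beta$ only yields $W\le\beta e^{-b\tau}$, which blows up as $\tau\to-\infty$. Geometrically this scenario means $v>0$ on all of $(0,e)$ with $(-\log(r/e))^{-k/(p-1)}v$ unbounded near $r=e$; your parenthetical worry about indefinite spiraling around $W_*$ is in fact the easy part (the energy already forbids entering a neighborhood of $(W_*,0)$), while the unbounded alternative is the one that needs work. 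The paper closes it with the a priori estimate $v(r)\le C_1(-\log(r/e))^{k/(p-1)}$ of Lemma \ref{apriorilemmap}, with $C_1$ depending only on $k,p$ and not on $\beta$ --- i.e.\ a uniform bound $W\le C_1$ wherever $W>0$, obtained by integrating $(rv')'=-rV_k|v|^p$ twice after bounding $\int_0^s tV_k\,dt$ from below --- and then feeds this into the monotonicity argument of Proposition \ref{importantprop} (ii): finiteness of $t_1=\inf\{t:\hat w'<0 \text{ on }(t,\infty)\}$, followed by the lower bound $|\hat w'|\ge c>0$ on the strip $-1\le\hat w\le 0$ coming from the strict decrease of the energy, which yields the finite hitting time. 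You need to supply this estimate, or a substitute for it, before your dichotomy is complete. (Minor points: your eigenfunction argument for $\lambda(\beta)\le\lambda^{*}$ is a legitimate alternative to the paper's use of Lemma \ref{apriorilemmap}; and $w_t$ decays only polynomially in $t=-\log r$, like $(1+t)^{-1-k}$, not exponentially, though this still suffices for $u\in H^1_0(B_1)$.)
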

We call the curve the bifurcation curve and we say that $(\lambda(\beta),\alpha(\beta))$ is a turning point if $\lambda(\beta)$ is an extreme point. Thanks to the above theorems, we verify that  $(\lambda(\beta_0),\alpha(\beta_0))$ is a turning point if and only if $\lambda$ is represented as a graph of $\alpha$ in some neighborhood of $(\lambda(\beta),\alpha(\beta))$ and $\frac{d\lambda}{d\alpha}(\alpha(\beta_0))=0$. On the contrary, it is not guaranteed whether $\lambda$ is globally parameterized by $\alpha$.
%We call this curve the bifurcation curve and we say that $(\lambda(\beta),\alpha(\beta))$ is a turning point if $\lambda(\beta)$ is an extreme point. Thanks to the above theorems, we verify that  $(\lambda(\beta_0),\alpha(\beta_0))$ is a turning point if and only if $\lambda$ is represented as a graph of $\alpha$ in some neighborhood of $(\lambda(\beta),\alpha(\beta))$ and $\frac{d\lambda}{d\alpha}(\alpha(\beta_0))=0$. 
%In addition, we should remark that it is not guaranteed whether $\lambda$ is globally parameterized by $\alpha:=\lVert u \rVert_{L^{\infty}(B_1)}$. Thus, we say that the bifurcation diagram is of Type $0$ (Type II) if $\lambda$ is globally parameterized by $\alpha$ and the property $0$ (II) is satisfied. We say that the bifurcation diagram is of Type I if the property I is satisfied. 

For the special weight $V_k$, we can get a singular solution explicitly. Moreover, we find new Emden-Fowler type transformations for the specific weights $V_k$. It enables us to use a phase plane analysis, and thus we get the following
\begin{theorem}
\label{singularth}
Let $N=2$, $k>0$ and $V_k$ be that in \eqref{pote}. Then,
\begin{itemize}
    \item[{\rm{(i)}}] when $f(u)=e^u$,  \eqref{gelfand} has a unique radial singular solution
    \begin{equation*}
        (\lambda_*, U_*):=\left(k,\log\left(\log(r/e)\right)\right).
    \end{equation*}
Moreover, $U_{*}\in H^1_{0}(B_1)$ and we have  
    \begin{equation*}
\lambda(\beta)\to \lambda_{*} \hspace{4mm}\text{and}\hspace{4mm} u(r,\beta)\to U_* (r) \hspace{4mm} \text{in $C^2_{\mathrm{loc}}(0,1]$} \hspace{4mm} \text{as $\beta\to\infty$}. 
 \end{equation*}
    \item[\rm{{(ii)}}] when $f(u)=(1+u)^p$ with $p>p_s$, \eqref{gelfand} has a radial singular solution 
    \begin{equation*}
    (\lambda_*, U_*):=\left(\frac{k}{p-1}(1-\frac{k}{p-1}), \left(\log(r/e) \right)^{\frac{k}{p-1}}-1\right).
    \end{equation*}
 Moreover, $U_{*}\in H^1_{0}(B_1)$ if and only if $p>p_c$. In addition, 
 when $p>p_c$, the radial singular solution of \eqref{gelfand} is unique and we have 
 \begin{equation*}
\lambda(\beta)\to \lambda_{*} \hspace{4mm}\text{and}\hspace{4mm} u(r,\beta)\to U_* (r) \hspace{4mm} \text{in $C^2_{\mathrm{loc}}(0,1]$} \hspace{4mm} \text{as $\beta\to\infty$.} 
 \end{equation*}
\end{itemize}
\end{theorem}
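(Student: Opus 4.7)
My approach has four interwoven parts. First, I would verify by direct computation that the explicit pair $(\lambda_*,U_*)$ given in the statement satisfies \eqref{gelfand} pointwise on $(0,1]$, with $U_*(1)=0$ and $U_*(r)\to\infty$ as $r\to 0$. For the $H^1_0(B_1)$ property, the substitution $s=-\log r$ reduces the Dirichlet energy to $2\pi k^{2}\int_{0}^{\infty}(1+s)^{-2}\,ds$ in the exponential case (always finite) and to $2\pi(k/(p-1))^{2}\int_{0}^{\infty}(1+s)^{2k/(p-1)-2}\,ds$ in the power case, which is finite if and only if $2k/(p-1)<1$, i.e.\ $p>p_c$.

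Next, I would introduce the Emden--Fowler type change of variables $t=\log(-\log(r/e))\in[0,\infty)$, under which the radial equation reduces to
\begin{equation*}
u_{tt}-u_{t}=-\lambda\,e^{-kt}f(u).
\end{equation*}
In the exponential case, the further substitution $u=v+kt$ yields the autonomous ODE $v_{tt}-v_{t}+\lambda e^{v}=k$, whose equilibrium $v\equiv 0$ at $\lambda=k$ corresponds exactly to $U_*$. In the power case, $1+u=e^{\gamma t}v$ with $\gamma=k/(p-1)$ yields
\begin{equation*}
v_{tt}+(2\gamma-1)v_{t}+\gamma(\gamma-1)v+\lambda v^{p}=0,
\end{equation*}
whose equilibrium $v\equiv 1$ at $\lambda=\gamma(1-\gamma)$ corresponds to $U_*$.

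With the autonomous ODE in hand, uniqueness follows from a phase plane analysis: a radial singular solution of \eqref{gelfand} translates to a trajectory satisfying the boundary datum at $t=0$ (from $u(1)=0$) and remaining bounded in an appropriate sense as $t\to\infty$, and I would show via the phase plane structure together with Lyapunov or monotonicity arguments that the only such trajectory is the equilibrium itself. The exponential case works for every $k>0$; in the power case the threshold $p>p_c$ enters because only then is the relevant stable manifold at the equilibrium a single curve in phase space, consistent with $U_*\in H^1_0$ requiring the same condition. For the convergence of the bifurcation curve, since $\lambda(\beta)\le\lambda^*$ and $\alpha(\beta)\to\infty$, any sequence $\beta_n\to\infty$ admits a subsequence with $\lambda(\beta_n)\to\lambda_\infty$ and $u(\cdot,\beta_n)\to U_\infty$ in $C^2_{\mathrm{loc}}(0,1]$ by elliptic regularity. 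Radial monotonicity gives $\lim_{r\to 0^+}U_\infty(r)\in(0,\infty]$; if this limit were finite, $U_\infty$ would extend to a regular radial solution and the continuous parametrization of Theorems \ref{diagramth}--\ref{diagramthp} would force $\alpha(\beta_n)$ to stay bounded, a contradiction. So $U_\infty$ is a radial singular solution, which by uniqueness equals $(\lambda_*,U_*)$; independence of the subsequence gives convergence of the whole sequence.

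The crux of the proof is the phase plane analysis. I must rule out nonconstant bounded trajectories of the autonomous ODE satisfying the prescribed data at $t=0$, and pin down the threshold $p>p_c$ in the power case; this requires careful orbit-following in the $(v,v_t)$ plane, possibly combined with an energy/stability argument via the quadratic form in \eqref{sta} applied to the linearization at $U_*$. A secondary issue is to exclude $\lambda_\infty=0$ in the limit extraction above, which should also fall out of the phase plane picture by pinning down the precise parameter at which a trajectory originating from the admissible one-parameter family at $t=0$ can remain bounded as $t\to\infty$.
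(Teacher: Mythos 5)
Your outline follows the same route as the paper (direct verification, the transformation $t=\log(-\log(r/e))$ to an autonomous ODE, phase-plane/Lyapunov arguments, and a compactness argument for the convergence of the bifurcation curve), and your explicit computations — the reduced equations, the equilibria, and the $H^1_0$ criterion $2k/(p-1)<1$ — are correct. However, the step you yourself flag as ``the crux'' is precisely where the real work lies, and it is not supplied. The assertion that a radial singular solution ``translates to a trajectory remaining bounded in an appropriate sense as $t\to\infty$'' is not part of the translation: a priori a singular solution only satisfies $u(r)\to\infty$ as $r\to0$, and nothing prevents $u$ from growing slower or faster than $kt$ (resp.\ than $e^{\gamma t}$), in which case $v$ would be unbounded below or above. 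The paper devotes three lemmas to exactly this point: an upper bound $v(r)\le k\log(-\log(r/e))+C$ (resp.\ $v\le C(-\log(r/e))^{k/(p-1)}$) valid for \emph{all} solutions, obtained by integrating $(rv')'=-rV_k e^v\le 0$, and a matching lower bound $\limsup_{r\to0}\bigl(v-k\log(-\log(r/e))\bigr)>-\infty$ (resp.\ $\limsup(-\log(r/e))^{-k/(p-1)}v>0$) valid for singular solutions, whose proof in the power case genuinely uses $p>p_c$. Only after these estimates can one invoke a ``bounded trajectories converge to the equilibrium'' lemma and then conclude from the fact that both characteristic exponents at the equilibrium have positive real part (which, in the power case, again holds exactly when $p>p_c$) that the trajectory is the equilibrium itself. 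Your phrase ``only then is the relevant stable manifold a single curve'' points in the right direction but inverts the picture: for $p>p_c$ the equilibrium is a source, so its stable set is trivial, and that triviality is what yields uniqueness.

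Two secondary gaps in the convergence part also need attention. First, to extract a $C^2_{\mathrm{loc}}(0,1]$ limit you need \emph{two-sided} bounds on $u(\cdot,\beta)$ on compact subsets of $(0,1]$ uniformly in $\beta$; the upper bound comes from the a priori estimate, but the lower bound (equivalently, excluding $\lambda_\infty=0$) is obtained in the paper by observing that all regular solutions are time-translates of a single trajectory $\hat w$ of the autonomous ODE and that the Lyapunov function keeps $\hat w,\hat w'$ bounded as $t\to-\infty$ — you defer this to ``the phase plane picture'' without proof. Second, your argument that the limit $U_\infty$ must be singular (``$C^2_{\mathrm{loc}}$ convergence plus the parametrization forces $\alpha(\beta_n)$ bounded'') does not follow from local convergence away from the origin; the paper instead gets singularity of the limit directly from the uniform bound $|v(r,\beta)-W(r)|<C$ near $r=0$ for large $\beta$, which again comes from the translated-trajectory observation. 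None of these gaps indicates a wrong approach, but as written the proposal proves existence and the $H^1_0$ characterization only; uniqueness and convergence remain open.
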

Next, we define the Morse index $m(U_{*})$ as the maximal dimension of a subspace $X\subset H^1(B_1)$ such that $Q_{U_{*}}(\xi)<0$ for all $\xi\in X\setminus\{0\}$, where $Q_{U_{*}}$ is that in \eqref{sta}. We remark that $U_{*}$ is stable if and only if $m(U_{*})=0$. In addition, we remark that the Morse index of the singular solution plays a key role in the bifurcation structure in the classical case (see \cite{Br,Dan,Guowei,Mi2014}). In the next theorem, we study the Morse index of the singular solutions.
\begin{theorem}
\label{morseth}
Let $N=2$, $k>0$ and $V_k$ be that in \eqref{pote}. We assume that $(\lambda_{*},U_{*})$ be that in Theorem \ref{singularth}. Then, 
\begin{itemize}
    \item[\rm{{(i)}}] when $f(u)=e^u$, we have $m(U_*)=0$ for $k\le \frac{1}{4}$ and $m(U_{*})=\infty$ for $k>\frac{1}{4}$.
\item[\rm{{(ii)}}] when $f(u)=(1+u)^p$ with $p>p_s$, we have $m(U_*)=0$ for $p\le p^{-}_{\mathrm{JL}}$ or $p\ge p^{+}_{\mathrm{JL}}$, and $m(U_{*})=\infty$ for $p^{-}_{\mathrm{JL}}<p<p^{+}_{\mathrm{JL}}$.
\end{itemize}
\end{theorem}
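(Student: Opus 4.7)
The plan is to reduce the Morse index computation in both cases to a single Hardy--Leray inequality with logarithmic weight on $B_1$, exploiting a cancellation between the weight $V_k$ and $f'(U_*)$.

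Writing $t := -\log(r/e) = 1-\log r$, a direct substitution using the explicit formulas from Theorem~\ref{singularth} shows that in both cases
\begin{equation*}
\lambda_* V_k(|x|)\,f'(U_*(|x|)) \;=\; \frac{C_k}{|x|^2\,(-\log(|x|/e))^2},
\end{equation*}
with $C_k=k$ in case (i) and $C_k=\tfrac{kp(p-1-k)}{(p-1)^2}$ in case (ii); the exponent $2+k$ in $V_k$ is tailored precisely so that the $t^k$-factor arising from $f'(U_*)$ cancels the $t^{2+k}$ denominator down to $t^2$. Hence
\begin{equation*}
Q_{U_*}(\xi) \;=\; \int_{B_1}|\nabla\xi|^2\,dx \;-\; C_k\int_{B_1}\frac{\xi^2}{|x|^2\,(-\log(|x|/e))^2}\,dx.
\end{equation*}

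To obtain $m(U_*)=0$, I would invoke the classical sharp Hardy--Leray inequality with logarithmic weight,
\begin{equation*}
\int_{B_1}|\nabla\xi|^2\,dx \;\ge\; \frac{1}{4}\int_{B_1}\frac{\xi^2}{|x|^2\,(-\log(|x|/e))^2}\,dx \qquad (\xi\in H^1_0(B_1)),
\end{equation*}
which follows from the standard Leray inequality on $B_e$ after extending $\xi$ by zero. This gives $Q_{U_*}\ge 0$ as soon as $C_k\le 1/4$. In case~(i) this is exactly $k\le 1/4$. In case~(ii), the critical equation $C_k=1/4$ reads $4kp(p-1-k)=(p-1)^2$, a quadratic in $p$ whose roots are precisely $p^\pm_{\mathrm{JL}}$ (with $p^+_{\mathrm{JL}}=\infty$ for $k\ge 1/4$), and $C_k\le 1/4$ becomes $p\le p^-_{\mathrm{JL}}$ or $p\ge p^+_{\mathrm{JL}}$.

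For the unstable range $C_k>1/4$, I would construct an infinite-dimensional subspace of radial test functions on which $Q_{U_*}$ is negative definite via the Emden--Fowler-type change of variables $s:=\log t=\log(-\log(r/e))$, $\eta(s):=\xi(r)$, together with $\zeta(s):=e^{-s/2}\eta(s)$. For $\zeta$ compactly supported in $(0,\infty)$ (so that the boundary term $[\zeta^2/2]_0^\infty$ drops), a direct computation gives
\begin{equation*}
Q_{U_*}(\xi) \;=\; 2\pi\int_0^{\infty}\Bigl[\zeta_s^2+\bigl(\tfrac{1}{4}-C_k\bigr)\zeta^2\Bigr]\,ds.
\end{equation*}
Since $1/4-C_k<0$, a compactly supported bump $\zeta_0$ of sufficiently wide support makes this integral strictly negative. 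Translating $\zeta_n(s):=\zeta_0(s-s_n)$ along $s_n\to\infty$ with pairwise disjoint supports produces, via the inverse change of variables, radial $\xi_n\in H^1_0(B_1)$ supported in disjoint spherical shells of $B_1\setminus\{0\}$, and disjointness gives $Q_{U_*}(\sum c_n\xi_n)=\sum c_n^2\,Q_{U_*}(\xi_n)<0$ for every non-trivial finite combination. Hence $\mathrm{span}\{\xi_n\}$ is infinite-dimensional and negative definite, yielding $m(U_*)=\infty$.

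The main obstacle is executing the Emden--Fowler reduction cleanly so that the cross term $\int\zeta\zeta_s\,ds$ dies against admissible boundary data, and invoking the Hardy--Leray inequality in the precise form required on $B_1$ with the log weight $-\log(|x|/e)$. The identification of $C_k=1/4$ with $p^\pm_{\mathrm{JL}}$ in case~(ii) is routine algebra (matching the classical exponents through the correspondence $N-2\leftrightarrow 2/k$).
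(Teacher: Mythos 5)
Your proposal is correct and takes essentially the same route as the paper: the exact cancellation reducing $Q_{U_*}(\xi)$ to $\int_{B_1}|\nabla\xi|^2\,dx - C_k\int_{B_1}\xi^2|x|^{-2}(\log(e/|x|))^{-2}\,dx$ with $C_k=k$ resp. $\tfrac{kp}{p-1}(1-\tfrac{k}{p-1})$, the critical Hardy inequality with sharp constant $\tfrac14$ for the stable range, and infinitely many disjointly supported test functions built in the $\log(-\log(r/e))$ variable for the range $C_k>\tfrac14$. The only (immaterial) difference is the shape of the negative-energy test functions: the paper uses sine bumps after the substitution $\xi=(\log(R/|x|))^{-1/2}\varphi$, while you use translated plateau bumps after the additional weight $e^{-s/2}$.
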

As mentioned in subsection \ref{a1}, the stability/instability of singular solutions is obtained by the Hardy inequality in the classical case with $N\ge 3$. On the contrary, we mention that in the weighted case, the exponents $\frac{1}{4}$ and $p^{\pm}_{\mathrm{JL}}$ arise from the best constant of the critical Hardy inequality and this inequality plays a key role in the stability/instability of the singular solution. Thanks to the stability analysis and the phase plane analysis,
we get the main theorems.
\begin{theorem}
\label{mainthe}
Let $N=2$ and $f(u)=e^u$ and $V_k$ is that in \eqref{pote}.
Then, the bifurcation diagram of \eqref{gelfand} is of 
\begin{itemize}
    \item[\rm{{(i)}}] Type I if $k>\frac{1}{4}$.
    \item[\rm{{(ii)}}] Type II if $k\le \frac{1}{4}$. 
\end{itemize}
Moreover, $\lambda$ is globally parameterized by $\alpha$ if $k\le \frac{1}{4}$.
\end{theorem}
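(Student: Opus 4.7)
The plan is to combine the global structure of $\mathcal{C}$ from Theorem \ref{diagramth}, the convergence of the curve to the singular solution from Theorem \ref{singularth}(i), and the stability/Morse-index dichotomy from Theorem \ref{morseth}(i). Part (ii) follows by a weighted Brezis--V\'azquez stability argument; part (i) follows by a Morse-index blow-up combined with a Crandall--Rabinowitz-type fold analysis.

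\textbf{Part (ii), $k\le 1/4$.} By Theorems \ref{singularth}(i) and \ref{morseth}(i), $(\lambda_*,U_*)$ is a stable $H^1_0$-singular solution of \eqref{gelfand}. I would invoke the Brezis--V\'azquez principle, stated in the introduction for \eqref{gelfand} verbatim: since $U_*$ is stable, $Q_{U_*}\ge 0$, and the convexity of $e^u$ together with the sub-/super-solution method produces a monotone increasing family of minimal classical solutions $u_\lambda\le U_*$ for $\lambda\in(0,\lambda_*)$. Uniqueness of radial solutions within $\mathcal{C}$ (Theorem \ref{diagramth}) forces $u(\cdot,\beta)=u_{\lambda(\beta)}$, so $\lambda$ is strictly monotone in $\beta$; hence no turning points occur, and Theorem \ref{singularth}(i) yields $\lambda\nearrow \lambda_*$. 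This is Type II; moreover strict monotonicity of $\lambda$ in $\beta$, together with the strict monotonicity of $\alpha$ in $\beta$ that follows from the relation $\alpha(\beta)=\beta-\log\lambda(\beta)$ of Theorem \ref{diagramth}, yields the global parameterization by $\alpha$.

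\textbf{Part (i), $k>1/4$.} Here $m(U_*)=\infty$. The first step is to upgrade this to $m(u(\cdot,\beta))\to\infty$ as $\beta\to\infty$: given $M\in\mathbb{N}$, pick $M$ linearly independent $\xi_1,\ldots,\xi_M\in C^{0,1}_0(B_1)$ realizing $Q_{U_*}(\xi_i)<0$, truncate them away from $r=0$, and exploit the $C^2_{\mathrm{loc}}(0,1]$-convergence from Theorem \ref{singularth}(i) to transfer the negativity to $Q_{u(\cdot,\beta)}$ for large $\beta$. The second step is the Morse-index/turning-point dictionary along the analytic curve $\mathcal{C}$: the Morse index is locally constant except at degenerate solutions where the radial linearization has a non-trivial kernel, and at each such point the Crandall--Rabinowitz theorem---whose hypotheses are verified by the simplicity of the first eigenvalue of the weighted radial operator---produces a local fold, i.e.\ $\lambda'(\beta)=0$. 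An unbounded Morse index then forces infinitely many turning points $\beta_i\to\infty$; combined with $\lambda(\beta_i)\to \lambda_*$ and the convergence of the whole curve from Theorem \ref{singularth}(i), this is exactly Type I.

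\textbf{Main obstacle.} The delicate step is in part (i): converting each Morse-index jump into exactly one fold, without higher-order tangencies or simultaneous eigenvalue crossings. This requires the radial linearization of \eqref{gelfand} at $u(\cdot,\beta)$ to have a simple kernel at every degenerate parameter, which in turn requires a simplicity and compactness result for the weighted eigenvalue problem $-\Delta\phi-\lambda V_k e^{u(\cdot,\beta)}\phi=\mu\phi$ on $H^1_0(B_1)$. The critical Hardy inequality that already drives Theorem \ref{morseth} supplies the necessary compactness, after which the analyticity of $\mathcal{C}$ renders the degenerate points isolated and the infinite-oscillation conclusion closes. A secondary but routine obstacle in part (ii) is to check that the Brezis--V\'azquez monotone-iteration argument survives the singular weight $V_k$ at $r=0$; this is handled by the same Hardy inequality which guarantees $V_k e^{U_*}\in L^1(B_1)$ times an $H^1_0$-test function, making the weak formulation well-defined.
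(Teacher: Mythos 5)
Your overall architecture (curve structure + convergence to the singular solution + stability dichotomy) matches the paper's, but both of your core mechanisms have genuine gaps, and in each case the paper uses a different, more elementary device.

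\textbf{Part (ii).} The step ``Uniqueness of radial solutions within $\mathcal{C}$ (Theorem \ref{diagramth}) forces $u(\cdot,\beta)=u_{\lambda(\beta)}$'' is a non sequitur. Theorem \ref{diagramth} says every radial solution occurs at some $\beta$; it does \emph{not} say that distinct $\beta$ give distinct $\lambda$, so the minimal solution at level $\lambda(\beta)$ need only coincide with $u(\cdot,\beta')$ for \emph{some} $\beta'$ with $\lambda(\beta')=\lambda(\beta)$. Ruling out several solutions at the same $\lambda$ is precisely the content of Type II, so your argument is circular at this point. Likewise, $\alpha(\beta)=\beta-\log\lambda(\beta)$ with $\lambda$ nondecreasing is a difference of two increasing functions, so monotonicity of $\alpha$ does \emph{not} ``follow from the relation''; the paper needs a separate contradiction argument with the truncated function $\hat u$ built from $\dot u$ to get $\dot\alpha>0$. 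The paper's actual route is: stability of $W$ on the \emph{larger} ball $B_e$ (Remark \ref{stre}, via the critical Hardy inequality with $R=e$) gives the separation property $v(\cdot,\beta)<v(\cdot,\gamma)<W$ for $\beta<\gamma$ (Proposition \ref{sepae}), hence $\dot\lambda\ge 0$ directly from $\lambda(\beta)=e^{v(1,\beta)}$; the Brezis--V\'azquez principle is never invoked.

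\textbf{Part (i).} Your Morse-index blow-up step is fine (the test functions of Proposition \ref{critical prop} are supported away from the origin, so $C^2_{\mathrm{loc}}(0,1]$-convergence transfers negativity), but the ``dictionary'' converting index jumps into folds is not established and is genuinely delicate: Crandall--Rabinowitz requires the transversality condition $\int_{B_1}V_k e^{u}\phi\,dx\neq 0$ for the kernel element $\phi$, which you do not verify, and even granting infinitely many folds you only get infinitely many local extrema of $\lambda$ accumulating at $\lambda_*$, not that $\lambda(\beta)-\lambda_*$ changes sign infinitely often, which is what Type I asserts. The paper bypasses all of this: after the Emden--Fowler transformation, $\lambda(\beta)=k\,e^{\hat w(-\beta/k+\log k/k)}$ where $\hat w$ is one fixed solution of the autonomous equation \eqref{ew} tending to $0$ as $s\to-\infty$, and Proposition \ref{osceu} (Sturm comparison, Lemma \ref{strumlem}, against the linearization whose eigenvalues are complex for $k>1/4$) shows $\hat w$ changes sign infinitely often. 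This yields the oscillation of $\lambda$ around $\lambda_*=k$ directly, with no bifurcation-theoretic machinery.
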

\begin{theorem}
\label{mainthp}
Let $N=2$, $k>0$, $f(u)=(1+u)^p$ and $V_k$ be that in \eqref{pote}.
Then, the bifurcation diagram of \eqref{gelfand} is of 
\begin{itemize}
    \item[\rm{{(i)}}] Type $0$ if 
    $p_s<p\le p_c$.
    \item[\rm{{(ii)}}] Type I if $p_c < p<p^{+}_{\mathrm{JL}}$.
    \item[\rm{{(iii)}}] Type II if $p\ge p^{+}_{\mathrm{JL}}$.
\end{itemize}
Moreover, $\lambda$ is globally parameterized by $\alpha$ if $p_s<p\le p_c$ or $p\ge p_{\mathrm{JL}}$.
\end{theorem}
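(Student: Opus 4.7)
The plan is to combine the global structure of $\mathcal{C}$ from Theorem \ref{diagramthp}, the convergence to the singular solution from Theorem \ref{singularth}, the Morse index dichotomy from Theorem \ref{morseth}, and a phase plane analysis of an Emden--Fowler type system in the spirit of \cite{korman2018}. Case (iii) $p\ge p^+_{\mathrm{JL}}$ is the cleanest: by Theorems \ref{singularth}(ii) and \ref{morseth}(ii), $U_*\in H^1_0(B_1)$ is a stable singular solution, so the Brezis--V\'azquez principle (already invoked for the classical case in Subsection \ref{a1}) produces a monotone branch of minimal classical solutions on $(0,\lambda_*)$ converging to $U_*$. Since $\mathcal{C}$ is connected, emanates from $(0,0)$, stays in $\{\lambda\le \lambda_*\}$ (no solution exists beyond $\lambda_*$), and converges to $(\lambda_*,U_*)$ by Theorem \ref{singularth}, an argument ruling out turning points on the stable branch (the linearized operator stays positive along it) identifies $\mathcal{C}$ with the minimal branch, giving Type II and global parameterization by $\alpha$.

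For case (ii) $p_c<p<p^+_{\mathrm{JL}}$: by Theorems \ref{singularth}(ii) and \ref{morseth}(ii), $U_*\in H^1_0(B_1)$, $\lambda(\beta)\to\lambda_*$, and $m(U_*)=\infty$. Under the Emden--Fowler type change of variables from Theorem \ref{diagramthp}, the radial equation becomes an autonomous planar system whose equilibrium corresponds to $U_*$. The linearization at this equilibrium has a characteristic polynomial whose discriminant changes sign precisely at $p=p^{\pm}_{\mathrm{JL}}$ (this is the algebraic origin of the $\tfrac{1}{4}$-Hardy threshold appearing in Theorem \ref{morseth}): for $p^-_{\mathrm{JL}}<p<p^+_{\mathrm{JL}}$ the roots are complex with negative real part, making the equilibrium a stable focus. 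The orbit corresponding to $\mathcal{C}$, parameterized by $\beta\in(0,\infty)$, then spirals into this focus, and tracking the winding number of the orbit yields infinitely many zeros of $d\lambda/d\beta$, i.e.\ infinitely many turning points around $\lambda_*$, giving Type I.

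For case (i) $p_s<p\le p_c$: Theorem \ref{diagramthp} gives $\beta^*<\infty$. The plan is to show that as $\beta\uparrow\beta^*$, $\lambda(\beta)\to 0$ and $\alpha(\beta)\to\infty$; in view of $\alpha(\beta)=\lambda(\beta)^{-1/(p-1)}\beta-1$, the first limit combined with boundedness of $\beta$ forces the second. Global boundedness $\lambda\le\lambda^*$ is already from Theorem \ref{diagramthp}, so it remains to locate the unique maximum $\lambda^*$ of $\lambda(\beta)$ and to show strict monotonicity of $\lambda$ on each side of it. In the transformed system this reduces to showing that orbits emanating from the point corresponding to $(0,0)$ do not encircle any equilibrium in the subcritical regime (the equilibrium $U_*$ is either outside the admissible region for $p<p_c$ or degenerate for $p=p_c$), and a direct analysis of monotonicity of the orbit in the appropriate phase variable concludes. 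Monotonicity of $\alpha(\beta)$ on the entire interval, i.e.\ $\alpha'(\beta)\neq 0$, then yields global parameterization by $\alpha$.

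I expect case (ii) to be the principal obstacle: translating the qualitative stable-focus picture into the quantitative claim that $\lambda(\beta)$ oscillates around $\lambda_*$ with infinitely many $\beta$ satisfying $\lambda'(\beta)=0$ requires careful angular control of the orbit as it spirals inward. Analogous arguments in the classical Type I regime (see e.g.\ \cite{Mi2014,Mi2015}) can serve as a guide, but the singular weight $V_k$ forces a modified phase variable and hence a new Lyapunov-type quantity to justify convergence of the winding number.
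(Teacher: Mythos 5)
Your overall architecture for (i) and (ii) tracks the paper's: the Emden--Fowler change of variables $w=W^{-1}v-1$, $t=\log(-\log(r/e))$ turns the problem into the autonomous equation \eqref{pw}, a Lyapunov function controls the orbit (Proposition \ref{importantprop}), and the complex eigenvalues of the linearization \eqref{lineap} for $p^{-}_{\mathrm{JL}}<p<p^{+}_{\mathrm{JL}}$ yield the oscillation --- the paper implements your ``winding number'' step via Sturm comparison (Proposition \ref{oscup}, Lemma \ref{strumlem}) applied to $\hat w$, together with the explicit formula $\lambda(\beta)=\theta(1-\theta)(\hat w(-\theta^{-1}\log\beta)+1)^{p-1}$. (A small slip: for $p>p_c$ the real part $\tfrac{1}{2}(1-2\theta)$ of the eigenvalues is \emph{positive}; the orbit spirals into the equilibrium as $s\to-\infty$, not forward in time.) The genuine gap is in case (iii). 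The Brezis--V\'azquez principle you invoke is stated in Subsection \ref{a1} for the classical \emph{unweighted} problem; transplanting it to the singular weight $V_k$ in two dimensions is exactly the content that must be proved here (the paper emphasizes that Type II had never been realized in $N=2$), and your phrase ``the linearized operator stays positive along it'' assumes the conclusion. The paper instead proves, via the critical Hardy inequality (Proposition \ref{critical prop}, Remark \ref{strp}), that $W$ is stable in $B_e$ when $p\ge p^{+}_{\mathrm{JL}}$, deduces the separation property $v(r,\beta)<W(r)$ and $v(r,\beta)<v(r,\gamma)$ for $\beta<\gamma$ (Proposition \ref{sepap}) by a cut-off/test-function comparison, and only then concludes $\dot\lambda\ge 0$. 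Your appeal to ``no solution exists beyond $\lambda_*$'' is likewise circular at this stage: Theorem \ref{diagramthp} only gives some bound $\lambda\le\lambda^*$, and identifying $\sup\lambda$ with $\lambda_*$ is a consequence of the monotonicity being proved.

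The second under-specified step is the global parameterization by $\alpha$, i.e.\ $\dot\alpha>0$, in both (i) and (iii); you state it as a goal but give no mechanism. In the paper this is a nontrivial stability argument: in case (i), at the unique zero $\beta_0$ of $\dot\lambda$ (which exists and is unique because $\hat w'$ changes sign exactly once by Proposition \ref{importantprop}(ii)--(iii)), the derivative $\dot v(\cdot,\beta_0)$ is a \emph{positive} eigenfunction of $-\Delta-pV_kv^{p-1}$ with eigenvalue $0$, so $0$ is the first eigenvalue and $v(\cdot,\beta_0)$ is stable; monotonicity of $v$ in $\beta$ on $(0,\beta_0]$ transfers stability to all $\beta_1<\beta_0$, and then testing the linearized equation for $\dot u$ against its negative part on $(0,r_0)$ contradicts $\dot\alpha\le 0$. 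In case (iii) the same test-function contradiction runs against the separation bound $v<W$ and the stability of $W$. Without this ingredient your case (i) does not exclude additional turning points of $\alpha$, and your case (iii) does not yield monotonicity of the curve in $\alpha$. Finally, for (i) you still need $\lambda(\beta)\to 0$ and $\alpha(\beta)\to\infty$ as $\beta\uparrow\beta^*$; this does follow from $v(1,\beta)\to 0$ (Proposition \ref{importantprop}(ii)) and $\alpha=\lambda^{-1/(p-1)}\beta-1$, so that part of your plan is sound.
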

Here, we explain related works for the weighted case. When $N\ge 3$, the bifurcation structure changes depending on the singularity of the weight at $r=0$. Indeed, when $f(u)=e^u$, Korman \cite{korman2014} proved for the weight $\hat{V}_k(r)=r^k$ with $k>0$ that the bifurcation diagram for radial solutions is of Type I if $3\le N<10+4k$. Moreover, Bae \cite{Bae} proved for the same weight with $k>-2$ that the bifurcation diagram is of Type II if $10+4k\le N$. On the other hand, when $f(u)=(1+u)^q$, for the same weight with $k>-2$, it is known by \cite{korman2018,kor2020} that the bifurcation diagram for radial solutions is of Type 0 if $1<q\le \hat{q}_{c}$, of Type I if $\hat{q}_{c}<q<\hat{q}_{\mathrm{JL}}$, and of Type II if $\hat{q}_{\mathrm{JL}}\le q$,
where $\hat{q}_{c}=\frac{N+2+2k}{N-2}$ if $N\ge 3$ and $\hat{q}_c=\infty$ if $N=2$. In addition, $\hat{q}_{\mathrm{JL}}$ is defined as
\begin{equation*}
\hat{q}_{\mathrm{JL}}=
   \begin{cases}
   \frac{(N-2)^2 - 2(k+2)(N+k)+2(k+2)\sqrt{(N+k)^2-(N-2)^2}}{(N-2)(N-10-4k)} &\text{if $10+4k<N$}, \\
   \infty &\text{if $3\le N \le 10+4k$}.
    \end{cases}
\end{equation*}
For related works with general weights, see \cite{Bae,Kuma}. In addition, we also mention the results concerning the separation and intersection property for radial solutions of $-\Delta v(r,\beta) = r^k f(v(r,\beta))$ in $\mathbb{R}^N$ with $N\ge 3$ and $k>-2$ satisfying $v(0,\beta)=\beta$. Bae \cite{Bae} showed that when $f(v)=e^v$, any radial solution $v(r,\beta)$ and $v(r,\gamma)$ intersect infinitely many times for $0<\beta<\gamma$ if $3\le N < 10+4k$, while it follows that $v(r,\beta)<v(r,\gamma)$ for $0<\beta<\gamma$ if $10+4k\le N$. In addition, Ni and Yotsutani \cite{Yotsu} showed that when $f(v)=v^q$, $v(r,\beta)$ has a finite zero for every $\beta>0$ if $1<q<\hat{q}_c$ and $v(r,\beta)$ is a positive entire solution for every $\beta>0$ if $q\ge \hat{q}_c$. Then, Wang \cite{wang} showed that when $f(v)=v^q$ with $\hat{q}_{c}<q<\hat{q}_{\mathrm{JL}}$, any radial solution $v(r,\beta)$ and $v(r,\gamma)$ intersect infinitely many times for $0<\beta<\gamma$, while when $\hat{q}_{\mathrm{JL}}\le q$, it follows that $v(r,\beta)<v(r,\gamma)$ for $0<\beta<\gamma$.
For more general results on this direction, see \cite{Bae2003,Bae2004,Bae2002-2,Bae2013,Bae,Baenaito2,Baenaito,Baeni,LLD,Yotsu,Yana,Yana2}.

On the contrary, when $N=2$, to the best of the author's knowledge, any changes of bifurcation structure depending on the weights have not been confirmed. Moreover, even bifurcations which satisfy the oscillation phenomenon have not been confirmed except in \cite{Bou,kuma2024,Naimen}, and bifurcations of Type II have not been confirmed in the literature.
The novelty of this paper is not only to establish bifurcations of Type II in two dimensions for the first time, but also to realize analogues of the phenomena obtained by \cite{Br,JL} in two dimensions by considering the suitable weight $V_k$.
Moreover, we obtain the following separation and intersection property analogous to the results in \cite{Bae,Yotsu,wang}.

\begin{theorem}
    \label{sepathm}
Let $N=2$, $\beta<\gamma$, $k>0$ and $V_k$ be that in \eqref{pote}. We consider the radial solution $v=v(r,\beta)$ of 
$-\Delta v(r,\beta) = V_{k}(|x|)f(v(r,\beta))$ in $B_e$ satisfying $v(0,\beta)=\beta$.
Then, the solution $v\in C^0(0,e)\cap C^2(0,e)$ is unique. Moreover, 
\begin{itemize}
\item[{\rm{(i)}}] when $f(v)=e^v$, we obtain that $v(r, \beta)<v(r,\gamma)$ in $B_e$ if $k\le \frac{1}{4}$. Moreover, $v(r,\beta)$ and $v(r, \gamma)$ intersect infinitely many times if $k>\frac{1}{4}$.
\item [\rm{{(ii)}}] when $f(v)=|v|^p$ with $p_s<p<p_c$, then we have $v(r,\beta)=0$ with some $r<e$ for any $\beta>0$.
\item[\rm{{(iii)}}] when $f(v)=|v|^p$ with $p\ge p_c$ and $\beta>0$, we obtain that $v(r,\beta)>0$ in $B_e$. Moreover, we get
$v(r,\beta)<v(r,\gamma)$ in $B_e$ if $p\ge p^{+}_{\mathrm{JL}}$, while $v(r,\beta)$ intersects $v(r,\gamma)$ infinitely many times if $p_c<p<p^{+}_{\mathrm{JL}}$.
\end{itemize}
\end{theorem}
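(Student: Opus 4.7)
The plan is to convert the radial ODE $-\Delta v=V_k(|x|)f(v)$ in $B_e$ into an autonomous planar system using the Emden--Fowler type changes of variables developed earlier in the paper for Theorems \ref{diagramth}--\ref{singularth}, and then read off the intersection/ordering dichotomy from a phase-plane analysis near the equilibrium that represents the singular solution from Theorem \ref{singularth}. I would first establish uniqueness of the radial Cauchy problem $v(0,\beta)=\beta$ on $(0,e)$: near $r=0$ one has $rV_k(r)\to 0$ sufficiently fast that a fixed-point argument on $rv_r(r)=-\int_0^r sV_k(s)f(v(s))\,ds$ gives a unique solution in $C^0[0,\epsilon)\cap C^2(0,\epsilon)$, and near $r=e$ I pass to the new variable below, in which the boundary singularity of $V_k$ disappears.

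The substitution is $t=\log(-\log(r/e))\in\mathbb{R}$, so $t\to+\infty$ corresponds to $r\to 0$ and $t\to-\infty$ to $r\to e$, together with a new unknown $w$ obtained by subtracting (exponential case) or multiplicatively rescaling by (power case) the explicit $U_*$ of Theorem \ref{singularth}, chosen so that $w\equiv 0$ represents $U_*$. A direct computation, essentially the one that produces $U_*$, turns the equation into an autonomous second-order ODE $w_{tt}+Aw_t+Bw+g(w)=0$ with $g(w)=o(w)$, equivalently a planar system with an equilibrium at the origin. The discriminant $A^2-4B$ of its linearization coincides with the Morse-index threshold of Theorem \ref{morseth}: it is nonnegative exactly when $k\le 1/4$ in the exponential case, or when $p\notin(p^{-}_{\mathrm{JL}},p^{+}_{\mathrm{JL}})$ in the power case. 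In the real-eigenvalue regime the equilibrium is a stable node and trajectories in the regular family $\{v(\cdot,\beta)\}_{\beta}$ are totally ordered by $\beta$, yielding $v(r,\beta)<v(r,\gamma)$ via a sliding/comparison argument. In the complex-eigenvalue regime the equilibrium is a stable spiral, so distinct trajectories rotate around it infinitely many times and must cross each other infinitely often as $r\to 0$.

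For part (ii) with $p_s<p<p_c$, the singular solution is not in $H^1_0$ by Theorem \ref{singularth}(ii) and $\beta^*<\infty$ by Theorem \ref{diagramthp}, which in the phase plane means that no regular trajectory is trapped; an energy argument on $w$ then forces $v(\cdot,\beta)$ to reach $0$ at some $r<e$. For part (iii) with $p\ge p_c$, the equilibrium corresponding to $U_*$ attracts the regular trajectories in $w$-coordinates, so $v(r,\beta)>0$ throughout $B_e$, and the same node-versus-spiral dichotomy at $p=p^{+}_{\mathrm{JL}}$ delivers the stated ordering or oscillation.

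The hard part will be the oscillatory case: I need to upgrade \emph{each trajectory spirals around the equilibrium} to \emph{two distinct trajectories actually cross each other infinitely often}. The cleanest route is a Sturm-type comparison between the difference $v(r,\beta)-v(r,\gamma)$ and a solution of the linearization at $U_*$, which oscillates infinitely near $r=0$ precisely in the negative-discriminant (infinite-Morse-index) regime of Theorem \ref{morseth}. A secondary technical point is to verify that the regular trajectories do approach the equilibrium in the $t$-plane rather than escape to infinity; this should follow from a Lyapunov functional constructed directly from the autonomous ODE together with the convergences $\lambda(\beta)\to\lambda_*$ and $u(\cdot,\beta)\to U_*$ already recorded in Theorem \ref{singularth}.
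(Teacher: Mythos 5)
Your overall architecture (Emden--Fowler transformation $t=\log(-\log(r/e))$, Lyapunov functional, node-versus-spiral dichotomy at the discriminant threshold, and a Sturm comparison between the difference of two solutions and the linearization at the singular solution) is the same as the paper's. But there is a concrete error in where you place the oscillation. You correctly record that $t\to+\infty$ corresponds to $r\to0$, yet you then claim the trajectories ``cross each other infinitely often as $r\to0$'' and that the linearized equation ``oscillates infinitely near $r=0$.'' This cannot happen: $v(0,\beta)=\beta<\gamma=v(0,\gamma)$, so the two solutions are strictly ordered near the origin, and in the $w$-variable the regular solutions satisfy $w\to-\infty$ as $t\to+\infty$, i.e.\ they \emph{leave} the equilibrium in forward time. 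The eigenvalues $\lambda_{\pm}=\tfrac12(1\pm\sqrt{1-4k})$ have positive real part, so the equilibrium is a source, not a ``stable node/spiral''; the regular trajectories converge to it only as $t\to-\infty$, i.e.\ as $r\to e$, where the weight \eqref{pote} is singular. Accordingly, the infinitely many intersections accumulate at the boundary $r=e$, and the Sturm argument (Lemma \ref{strumlem} in the paper) must be run on $(-\infty,l)$ using $w_0=w(\cdot,\gamma)-w(\cdot,\beta)\to0$ as $t\to-\infty$. As written, your argument would be looking for crossings at the wrong end of the interval and would fail there.

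The second gap is the ordering in the non-oscillatory regimes $k\le\tfrac14$ and $p\ge p^{+}_{\mathrm{JL}}$. A real-eigenvalue equilibrium does not by itself force the family to be totally ordered: a trajectory approaching a node can still overshoot, so ``a sliding/comparison argument'' needs substance. The paper gets the separation not from the phase plane but from the stability of the explicit singular solution $W$, proved via the critical Hardy inequality with optimal constant $\tfrac14$ (Proposition \ref{critical prop}, Remarks \ref{stre} and \ref{strp}): one first shows $v(\cdot,\beta)<W$ by testing the stability inequality against $(W-v)_{+}$ supported on a hypothetical contact ball, concludes each regular solution is stable, and then repeats the argument for two regular solutions (Propositions \ref{sepae} and \ref{sepap}). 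You never invoke the Hardy inequality or the stability of $W$, and without some such ingredient the ordering claim is unproven. Your treatment of (ii) and of positivity in (iii) via the Lyapunov function is essentially the paper's Proposition \ref{importantprop} and is fine in outline.
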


This paper is organized as follows. In Section \ref{expsec}, we deal with the exponential case. In Section \ref{powersec}, we deal with the case $f(u)=(1+u)^p$.

%%%%%%%%%%%%%%%%%%%%%%%%%%%%%%%%%%%%%%%%%%%%%%%%%%%%%%%%%%%%%%%%%%%%%%%%%%%%%%%%%%%%%%%%%%%%%%%%%%%%%%%%%%%%%%%%%%%%%%%%%%%%%%%%%%%%%%%%%%%%%%%%%%%%%%%%%%%%%%%%%%%%%%%%%%%%%%%%%%%%%%%%%%%%%%%
\section{The exponential case}
\label{expsec}
In this section, we deal with the exponential case $f(u)=e^u$. We begin by 
introducing a specific change of variables which is used in \cite{korman2014,Kuma}. Assume that $(\lambda,u)$ is a radial solution of \eqref{gelfand} with $\lVert u \rVert_{L^\infty(B_1)}=\alpha$. We define 
$v:=u+\log \lambda$ and $\beta:=\alpha+\log \lambda$. Then, $v$ can be extended to $(0,e)$ such that $v$ satisfies 
\begin{equation}
\label{ev}
\left\{
\begin{alignedat}{4}
&v''+  \frac{1}{r}v' + V_k (r) e^v=0, \hspace{4mm}0<r<e,\\
&v(0)=\beta, \hspace{4mm}v\in C^2(0,e)\cap C^{0}[0,e).
\end{alignedat}
\right.
\end{equation}

In the following, we consider a solution of the equation
\begin{equation}
\label{apriorieq}
v''+  \frac{1}{r}v' + V_k (r) e^v=0, \hspace{6mm}0<r<e, \hspace{6mm} v\in C^2(0,e)
\end{equation}
satisfying 
\begin{equation}
\label{zimei}
    \liminf_{r\to 0} v(r)>-\infty,
\end{equation}
where $V_k(r)$ is that in \eqref{pote}. Here, we say that $v$ is a radial singular solution of \eqref{apriorieq} if $v$ satisfies \eqref{apriorieq} and $\lim_{r\to 0} v(r)=\infty$.

\subsection{A priori estimates}
We start from introducing the following a 
priori estimates.
\begin{lemma}
\label{apriorileme}
Assume that $v$ is a solution of \eqref{apriorieq} satisfying \eqref{zimei}. Then, there exist $C_1>0$ and $C_2>0$ depending only on $k$ such that 
\begin{equation*}
v(r)\le k\log\left(-\log (r/e)\right) + C_1, \hspace{4mm} 0\le -v'(r) \le -\frac{C_2}{r\log (r/e)} \hspace{4mm} \text{for $0<r<e$},
\end{equation*}
and
\begin{equation}
    \label{aprioriestimate 2-2}
    -rv'(r)= \int_{0}^{r}sV_{k}(s)e^{v(s)}\,ds \hspace{4mm} \text{for $0<r<e$}.
\end{equation}
\end{lemma}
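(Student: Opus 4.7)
The plan is to establish the three assertions in the order (c), (a), (b), chaining each into the next.

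First, I would set $w(r) := rv'(r)$ and rewrite the equation as $w'(r) = -rV_k(r)e^{v(r)} \le 0$, so that $w$ is nonincreasing on $(0,e)$ and $L := \lim_{r\to 0^+} w(r)$ exists in $\mathbb{R}\cup\{+\infty\}$. I would then show that hypothesis \eqref{zimei} forces $L=0$: if $L > 0$ (allowing $L = +\infty$), then $v'(r) \ge L/(2r)$ near $0$ gives $v(r) \to -\infty$ as $r \to 0^+$, contradicting \eqref{zimei}; if $L < 0$, then $v'(r) \le L/(2r) < 0$ forces $v(r) \ge v(r_0) + (|L|/2)\log(r_0/r) \to +\infty$, whereupon a direct calculation shows that $sV_k(s)e^{v(s)}$ fails to be integrable near $s=0$, contradicting the finite value $\int_0^{r_0} sV_k(s)e^{v(s)}\,ds = L - w(r_0)$ obtained by integrating $w'$. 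With $L=0$ in hand, integrating $w'(s) = -sV_k(s)e^{v(s)}$ from $0$ to $r$ yields identity \eqref{aprioriestimate 2-2}, and the nonnegativity of its right-hand side gives $-v' \ge 0$, so $v$ is nonincreasing.

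Next, since $v$ is nonincreasing, $v(s) \ge v(r)$ for $s \in (0, r)$, and identity \eqref{aprioriestimate 2-2} combined with the explicit computation $\int_0^r sV_k(s)\,ds = \frac{1}{(1+k)(-\log(r/e))^{1+k}}$ (obtained via the substitution $u = -\log(s/e)$) produces the differential inequality
\begin{equation*}
\frac{d}{dr} e^{-v(r)} = -v'(r)e^{-v(r)} \ge \frac{1}{(1+k)\,r\,(-\log(r/e))^{1+k}}.
\end{equation*}
I would integrate this from $r_0 \in (0, r)$ up to $r$, using the antiderivative $\frac{1}{k(1+k)(-\log(s/e))^k}$, and then let $r_0 \to 0^+$. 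Because $v$ is nonincreasing, the limit $\lim_{r_0 \to 0^+} e^{-v(r_0)}$ exists in $[0,\infty)$; discarding it by the trivial bound $\ge 0$, the inequality collapses to
\begin{equation*}
e^{-v(r)} \ge \frac{1}{k(1+k)(-\log(r/e))^k},
\end{equation*}
whence $v(r) \le k\log(-\log(r/e)) + C_1$ with $C_1 = \log(k(1+k))$, enlarged to be positive if necessary.

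Finally, substituting this upper bound on $v$ back into identity \eqref{aprioriestimate 2-2} gives
\begin{equation*}
-rv'(r) \le e^{C_1}\int_0^r \frac{ds}{s(-\log(s/e))^2} = \frac{e^{C_1}}{-\log(r/e)},
\end{equation*}
whose antiderivative $\frac{1}{-\log(s/e)}$ is checked directly; dividing by $r$ produces the desired bound with $C_2 = e^{C_1}$.

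The main obstacle lies in the middle step: the natural input (monotonicity of $v$) only provides a \emph{lower} bound on $-rv'(r)$, equivalently a differential inequality stating that $e^{-v}$ grows \emph{faster} than a known rate. Converting this into an \emph{upper} bound on $v$ requires integrating from the origin rather than towards it and then discarding an unknown boundary term using only the trivial positivity $e^{-v} \ge 0$ — an argument that is tight precisely at the singular solution $v_*(r) = k\log(-\log(r/e)) + \log k$, which explains why the resulting constant in the exponent is exactly $k$.
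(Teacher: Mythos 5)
Your proof is correct and follows essentially the same route as the paper: monotonicity of $rv'$, the comparison $e^{v(s)}\ge e^{v(r)}$ for $s\le r$ leading to the differential inequality for $e^{-v}$ (with the same constant $C_1=\log(k(k+1))$), and back-substitution of the resulting upper bound into the integral identity to get the gradient estimate. The only difference is organizational: you establish $\lim_{r\to 0}rv'(r)=0$ (hence \eqref{aprioriestimate 2-2}) up front, ruling out a negative limit by a non-integrability argument, whereas the paper deduces the same fact after the pointwise bound on $v$ by contradiction with that bound; both are valid.
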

We remark that this lemma can be proved by a similar argument to that in the proof of \cite[Lemma 2.3]{Mi2020}.
For readers convenience, we show the proof.

\begin{proof}
We first prove that $v'\le 0$ in $(0,e)$ by contradiction. Thus, we assume that $v'(t)>0$ for some $t>0$. Since 
\begin{equation}
\label{inte}
    (rv')'=-r V_k(r)e^v\le 0 \hspace{4mm} \text{in $(0,e)$},
\end{equation}
we have $v'(s)\ge \frac{t}{s}v'(t)$ for all $0<s<t$. Integrating this inequality over $(r,t)$, we get $v(r)\le v(t) -tv'(t)\log(t/r)\to-\infty$ as $r\to 0$, which contradicts \eqref{zimei}.

Next, we take $0<r_0\le s\le r<e$. Thanks to the fact that $v'\le 0$  in $(0,e)$,
by integrating \eqref{inte} over $(r_0, s)$, we get 
\begin{align*}
    -sv'(s)&=-r_0v'(r_0)+\int_{r_0}^{s}tV_k(t)e^v\,dt\\
    &\ge \frac{1}{1+k}e^{v(s)}\left((-\log (s/e))^{-(1+k)}-(-\log (r_0/e))^{-(1+k)}\right).
\end{align*}
Letting $r_0\to 0$, we have
\begin{equation*}
-e^{-v(s)}v'(s)\ge \frac{1}{1+k} s^{-1} (-\log (s/e))^{-(1+k)}.
\end{equation*}
Integrating this inequality over $(\rho,r)$ and letting $\rho\to 0$, we get 
\begin{equation*}
   e^{v(r)} \le k(1+k) (-\log (r/e))^{k}. 
\end{equation*}
Hence, we obtain 
\begin{equation}
\label{aiyae}
    v(r)\le k\log(-\log(r/e))+C_1 \hspace{6mm} \text{with $C_1=\log(k(k+1))$}.
\end{equation}
Then, we prove that $rv'(r)\to 0$ as $r\to 0$ by contradiction. Therefore, we assume that $\limsup_{r\to 0} -r v'(r)>c$ for some $c>0$. Since $-rv'$ is non-decreasing, we get $-v'(r)\ge c/r$ for all $r<e$. Therefore, we deduce that $v(r)\ge v(1)-c\log r$, which contradicts \eqref{aiyae}. 

Therefore, by integrating \eqref{inte} over $(r_0,r)$ and then letting $r_0\to 0$, we get \eqref{aprioriestimate 2-2}. Moreover, by using \eqref{aiyae} again, we have 
\begin{equation*}
    -v'(r)\le r^{-1}\int_{0}^{r}sV_k(s)e^v\,ds\le \int_{0}^{r} \frac{e^{C_1}}{rs(-\log(s/e))^{2}}\,ds\le \frac{-e^{C_1}}{r\log(r/e)}.
\end{equation*}
Thus, we get the result.
\end{proof}

As a result of Lemma \ref{apriorileme}, we obtain an existence/uniqueness result for the solution of \eqref{ev}.

\begin{corollary}
\label{formeu}
Let $k>0$ and $\beta\in \mathbb{R}$. Then, the equation \eqref{ev} has a unique solution $v=v(r,\beta)\in C^2(0,e)\cap C^0[0,e)$. Moreover, $v\in H^1(B_{r_0})$ for all $0<r_0<e$. In addition, $v$ satisfies
\begin{equation}
\label{inteqe}
    v(r,\beta)=\beta-\int_{0}^{r} \int_{0}^{s}\frac{t}{s}V_k(t)e^{v(t)}\,dt\,ds.
\end{equation}
\end{corollary}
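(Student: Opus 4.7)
The plan is to prove the corollary in three stages: local existence on $[0,r_0]$ for some small $r_0>0$ via a Banach fixed point applied to \eqref{inteqe}, extension to $[0,e)$ using the a priori bounds of Lemma \ref{apriorileme}, and uniqueness together with the $H^1$ regularity again from that lemma.

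For local existence, the key preliminary is the kernel computation
\[
\int_0^r\int_0^s\frac{t}{s}\,V_k(t)\,dt\,ds=\frac{1}{k(k+1)\bigl(-\log(r/e)\bigr)^k}\xrightarrow[r\to 0^+]{}0,
\]
obtained by the substitution $u=-\log(t/e)$ performed twice. This decay lets me work on the closed ball $X_{r_0}:=\{w\in C([0,r_0]):\|w-\beta\|_\infty\le 1\}$ with the operator $T[w](r):=\beta-\int_0^r\int_0^s\frac{t}{s}V_k(t)e^{w(t)}\,dt\,ds$. Because $|e^{w_1}-e^{w_2}|\le e^{\beta+1}|w_1-w_2|$ on $X_{r_0}$, the Lipschitz constant of $T$ is bounded by $e^{\beta+1}/[k(k+1)(-\log(r_0/e))^k]$, which can be made $<1/2$ by shrinking $r_0$. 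Hence $T$ is a contraction of $X_{r_0}$ into itself, and its fixed point $v$ lies in $C^0[0,r_0]$ with $v(0)=\beta$; differentiating \eqref{inteqe} twice shows $v\in C^2(0,r_0]$ and solves \eqref{apriorieq}.

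For the extension, on each compact $[a,b]\subset(0,e)$ the equation is a regular second-order ODE, and Lemma \ref{apriorileme} gives $v'\le 0$ together with $v(r)\le k\log(-\log(r/e))+C_1$, which keeps $v$ bounded on $[a,b]$; the solution therefore extends to all of $[0,e)$ and automatically inherits the bounds of Lemma \ref{apriorileme}. Dividing \eqref{aprioriestimate 2-2} by $r$ and integrating over $[0,r]$ (justified by the integrable bound $|v'|\le C_2/[r(-\log(r/e))]$ and the continuity $v(0)=\beta$) recovers \eqref{inteqe}. Uniqueness then follows by taking two solutions $v_1,v_2$ and $M:=\sup_{[0,r_0]}e^{\max(v_1,v_2)}$, plugging $|e^{v_1}-e^{v_2}|\le M|v_1-v_2|$ into \eqref{inteqe} to obtain
\[
\sup_{[0,r]}|v_1-v_2|\le\frac{M}{k(k+1)\bigl(-\log(r/e)\bigr)^k}\sup_{[0,r]}|v_1-v_2|,
\]
which forces $v_1\equiv v_2$ near $r=0$; standard ODE uniqueness propagates this to all of $[0,e)$. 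Finally, $r|v'(r)|^2\le C_2^2/[r\log^2(e/r)]$ is integrable near $0$, and $v$ is bounded on $[0,r_0]$, hence $v\in H^1(B_{r_0})$ for every $0<r_0<e$.

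The only delicate point is handling the singular endpoint $r=0$ in the fixed-point argument; everything hinges on the decay $(-\log(r/e))^{-k}\to 0$ as $r\to 0$, which is precisely why the assumption $k>0$ is essential here and consistent with the non-existence result quoted for $k\le 0$.
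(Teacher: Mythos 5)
Your proposal is correct and follows the same route as the paper: derive the integral equation \eqref{inteqe} from \eqref{aprioriestimate 2-2}, then solve it by what the paper calls ``a standard ODE argument,'' which you simply spell out as a contraction mapping near $r=0$ plus regular continuation, with the $H^1$ bound read off from the gradient estimate of Lemma \ref{apriorileme}. The kernel computation $\int_0^r\int_0^s\frac{t}{s}V_k(t)\,dt\,ds=\frac{1}{k(k+1)(-\log(r/e))^k}$ and the resulting contraction constant are accurate, so no gap.
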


\begin{proof}
Assume that $v\in C^2(0,e)\cap C^0[0,e)$ is a solution of \eqref{apriorieq}. Let $0<s<r<e$. Then, thanks to Lemma \ref{apriorileme}, we have
\begin{equation*}
    -v'(s)=\int_{0}^{t}\frac{t}{s}V_k(t)e^{v(t)}\,dt.
\end{equation*}
Integrating the above over $(\rho,r)$ and letting $\rho\to 0$, we get \eqref{inteqe}. Moreover, by a standard ODE argument, we deduce that \eqref{inteqe} has a unique solution $v\in C^2(0,e)\cap C^0[0,e)$, which satisfies \eqref{ev}. Finally, we can easily confirm that $v\in H^1(B_{r_0})$ for all $0<r_0<e$.
\end{proof}
As a result of Corollary \ref{formeu}, by using the specific change of variables introduced at the beginning of this section, we show Theorem \ref{diagramth}.
\begin{proof}[Proof of Theorem \ref{diagramth}]
Let $\beta\in \mathbb{R}$. Then, $(\lambda(\beta), u(r,\alpha(\beta))):=(e^{v(1,\beta)}, v(r, \beta)-\log \lambda(\beta))$ is a radial solution of \eqref{gelfand} with $f(u)=e^u$ and $\alpha(\beta):=\beta-\log \lambda(\beta)$. On the other hand, as mentioned at the beginning of this section, we deduce that every radial solution of \eqref{gelfand} is parameterized by $\beta$. In addition, thanks to Corollary \ref{formeu}, we deduce that every solution $u(r,\alpha(\beta))\in H^1_{0}(B_1)$ and $u$ satisfies \eqref{gelfand} in the weak sense. Moreover, by the fact that $\lambda(\beta)=e^{v(1,\beta)}$ and Lemma \ref{apriorileme}, we obtain $\lambda(\beta)\le \lambda^{*}$ with some $\lambda^{*}>0$ depending only on $k$ and thus we verify that the bifurcation curve is emanating from $(0,0)$. Finally, the analyticity of the bifurcation curve follows from the analyticity of $f(u)=e^u$. Thus, we get the result.
\end{proof}

At the end of this subsection, we prove the following estimate for singular solutions.
\begin{lemma}
\label{lowerleme}
Assume that $v$ is a singular 
solution of \eqref{apriorieq}. Then, we have
\begin{equation*}
    \limsup_{r\to 0} \left(v-k\log\left(-\log(r/e)\right)\right)>-\infty.
\end{equation*}
\end{lemma}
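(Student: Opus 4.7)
The plan is to argue by contradiction, feeding the assumed asymptotics into the integral identity \eqref{aprioriestimate 2-2} and bootstrapping. Writing $s(r) := -\log(r/e)$ for brevity, I suppose that $v(r) - k\log s(r) \to -\infty$ as $r\to 0^+$, equivalently $e^{v(r)}/s(r)^k \to 0$. The goal is to use this to improve the pointwise upper bound on $v$ twice, reaching a bound that contradicts the singularity $v(r) \to +\infty$.

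First, I would fix $\epsilon > 0$ and choose $\delta > 0$ so that $e^{v(t)} \leq \epsilon\, s(t)^k$ for all $0 < t \leq \delta$. Substituting into \eqref{aprioriestimate 2-2} and using the elementary primitive $\int_0^r \frac{dt}{t\, s(t)^2} = \frac{1}{s(r)}$, I obtain $-r v'(r) \leq \epsilon/s(r)$ on $(0,\delta]$. A second integration, together with $\int_r^\delta \frac{dt}{t\, s(t)} = \log s(r) - \log s(\delta)$, then yields
\[
v(r) \leq v(\delta) + \epsilon\bigl(\log s(r) - \log s(\delta)\bigr) \qquad \text{for } 0 < r \leq \delta,
\]
and hence $e^{v(r)} \leq C_\epsilon\, s(r)^\epsilon$ on $(0,\delta]$ for some constant $C_\epsilon$.

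Next I would bootstrap by plugging this refined bound back into \eqref{aprioriestimate 2-2}. Using $\int_0^r \frac{dt}{t\, s(t)^{2+k-\epsilon}} = \frac{1}{(1+k-\epsilon)\, s(r)^{1+k-\epsilon}}$ (valid whenever $\epsilon < k+1$), I get $-r v'(r) \leq C'_\epsilon\, s(r)^{-(1+k-\epsilon)}$. Integrating once more, the relevant primitive is $\frac{1}{(k-\epsilon)\, s(r)^{k-\epsilon}}$, which remains \emph{bounded} as $r \to 0$ provided $\epsilon < k$. Thus $v(r) \leq M$ on $(0,\delta]$ for some finite constant $M$, contradicting $v(r) \to \infty$ and proving the lemma.

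The only delicate point is the closure of the bootstrap: the first integration only earns an arbitrarily small exponent $\epsilon > 0$, and the second integration must still converge at $r = 0$. The latter forces $\epsilon < k$, and it is no accident that the threshold is exactly $k$ -- this is the logarithmic growth rate predicted by the formal Emden--Fowler analysis of \eqref{apriorieq}, and it is what singles out the rate $k\log s(r)$ appearing in the statement.
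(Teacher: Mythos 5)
Your argument is correct and follows essentially the same route as the paper's own proof: both feed the contradiction hypothesis into the identity \eqref{aprioriestimate 2-2}, first extract the intermediate bound $e^{v}\le C\,(-\log(r/e))^{\varepsilon}$ with $\varepsilon$ small, then plug it back in to get a second integration that converges at $r=0$, contradicting $v\to\infty$. The only cosmetic differences are that the paper phrases the first integration as the monotonicity of $(-\log(s/e))^{-\varepsilon}e^{v}$ and fixes $\varepsilon<k/2$ rather than tracking the sharp threshold $\varepsilon<k$.
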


\begin{proof}
Assuming $v-k\log\left(-\log(r/e)\right)\to -\infty$ as $r\to 0$, let us derive a contradiction. Let $0<\varepsilon<k/2$. Then, there exists $0<r_0<1$ such that 
\begin{equation*}
   rV_k(r)e^v= \frac{1}{r(\log (r/e))^2}e^{(v-k\log\left(-\log(r/e)\right))}<\frac{\varepsilon}{r(\log (r/e))^2}, \hspace{4mm} 0<r<r_0.
\end{equation*}
Hence, by Lemma \ref{apriorileme}, we have 
\begin{equation*}
 -v'(s)=\frac{1}{s}\int_{0}^{s}tV_k(t)e^{v(t)}\,dt\le \int_{0}^{s}\frac{\varepsilon}{st(\log (t/e))^2}\,dt\le \frac{-\varepsilon}{s\log (s/e)}, \hspace{4mm} 0<s<r_0.
\end{equation*}
Thanks to the above estimate, we get
\begin{equation*}
    \left((-\log (s/e))^{-\varepsilon}e^v\right)'=(-\log (s/e))^{-\varepsilon}e^v(v'-\varepsilon(s\log (s/e))^{-1})\ge 0, \hspace{4mm} 0<s<r_0
\end{equation*}
and thus we deduce that 
\begin{equation*}
e^{v(s)}\le (-\log (r_0 /e))^{-\varepsilon}e^{v(r_0)}(-\log (s/e))^{\varepsilon}\le C (-\log (s/e))^{\varepsilon}, \hspace{4mm} 0<s<r_0,  
\end{equation*}
where $C>0$ is a constant. Therefore, by the fact that $\varepsilon<\frac{k}{2}$ and Lemma \ref{apriorileme}, we have
\begin{equation*}
    -v'(s)=\frac{1}{s} \int_{0}^{s}tV_k(t)e^{v(t)}\,dt\le \int_{0}^{s}\frac{C}{st(-\log (t/e))^{\frac{k}{2}+2}}\,dt\le \frac{C}{s(-\log (t/e))^{\frac{k}{2}+1}}
\end{equation*}
for all $0<s<r_0$. Integrating the above inequality over $(\rho, r_0)$, we get
\begin{equation*}
    v(\rho)-v(r_0)\le \frac{C}{{r_0(-\log (r_0/e))^{\frac{k}{2}}}}, 
\end{equation*}
which contradicts the assumption that $v(r)\to \infty$ as $r\to 0$.
\end{proof}
\subsection{An Emden-Fowler type transformation}
In this subsection, we obtain a new Emden-Fowler type transformation and prove the uniqueness of a singular solution and the oscillation of the bifurcation curve. At first, we mention that 
\begin{equation*}
    W(r)=k \log\left(-\log (r/e)\right)+\log k
\end{equation*}
is a singular solution of \eqref{apriorieq}.

Let $v$ be a solution of \eqref{apriorieq} 
satisfying \eqref{zimei}.
For any $r\in (0,e)$, we apply the following Emden-Fowler type transformation  
 \begin{equation*}
 \label{transe}
 w(t)=v(r)-W(r)\hspace{4mm}\text{with} \hspace{2mm}  t=\log(-\log (r/e)).  
 \end{equation*}
Then, $w$ satisfies 
\begin{equation}
\label{ew}
    \frac{d^2}{dt^2}w-\frac{d}{dt}w+k(e^w-1)=0, \hspace{4mm} t\in \mathbb{R}.
\end{equation}
Then, we observe the linearized equation
\begin{equation}
\label{lineae}
    \frac{d^2}{dt^2}w-\frac{d}{dt}w+kw=0.
\end{equation}
The associated eigenvalues are given by
\begin{equation*}
    \lambda_{\pm}=\frac{1}{2}\left(1\pm \sqrt{1-4k}\right).
\end{equation*}
Hence, all nontrivial solutions of \eqref{lineae} change sign infinitely many times provided $k>1/4$. Applying the Sturm's comparison theorem (see Lemma \ref{strumlem}), we have the following

\begin{proposition}
\label{osceu}
All nontrivial solutions of \eqref{ew} satisfying $w\to 0$ as $t\to -\infty$
change sign infinitely many times when $k>1/4$.
\end{proposition}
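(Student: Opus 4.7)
The plan is to argue by contradiction via Sturm's comparison theorem, exploiting that near $w=0$ the equation \eqref{ew} is a small perturbation of the linearization \eqref{lineae}. First, I would rewrite \eqref{ew} as a Sturm--Liouville equation with a $w$-dependent coefficient. Defining
\begin{equation*}
\phi(s):=\frac{e^s-1}{s}\text{ for } s\ne 0,\qquad \phi(0):=1,
\end{equation*}
one checks that $\phi$ is continuous and positive on $\mathbb{R}$ with $\phi(0)=1$. Multiplying \eqref{ew} by $e^{-t}$ then yields
\begin{equation*}
(e^{-t}w')' + ke^{-t}\phi(w)w = 0,\qquad t\in\mathbb{R},
\end{equation*}
which I would compare with $(e^{-t}z')' + Ke^{-t}z = 0$ for a constant $K>1/4$. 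The general solution of the latter is $z(t)=e^{t/2}(A\cos(\omega t)+B\sin(\omega t))$ with $\omega=\sqrt{K-1/4}>0$, so every nontrivial $z$ has infinitely many simple zeros on each left half-line $(-\infty,T]$.

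Suppose, toward a contradiction, that the nontrivial $w$ in question has only finitely many sign changes. Since \eqref{ew} is a second-order ODE with analytic right-hand side, uniqueness prevents $w$ from having a double zero (otherwise $w\equiv 0$), so all zeros of $w$ are simple sign changes; finitely many of them implies the existence of $T_0\in\mathbb{R}$ with $w$ of constant sign on $(-\infty,T_0]$. Using $w(t)\to 0$ as $t\to-\infty$ together with continuity of $\phi$ at $0$, I then choose $\varepsilon\in\bigl(0,\,1-(4k)^{-1}\bigr)$---possible precisely because $k>1/4$---and $T_1\le T_0$ such that $\phi(w(t))\ge 1-\varepsilon$ for every $t\le T_1$. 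With $K:=k(1-\varepsilon)$ one has $K>1/4$ and the pointwise inequality $ke^{-t}\phi(w(t))\ge Ke^{-t}$ on $(-\infty,T_1]$.

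I would then apply Sturm's comparison theorem (Lemma \ref{strumlem}) on each compact interval $[a,T_1]$ and let $a\to-\infty$: between any two consecutive zeros of $z$ in $[a,T_1]$ there must lie a zero of $w$. Since $z$ has infinitely many zeros on $(-\infty,T_1]$, so does $w$, contradicting the constant-sign property on $(-\infty,T_0]$ and completing the argument.

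The main---and essentially the only---obstacle is the choice of the comparison coefficient $K$: it must be strictly larger than $1/4$ so that $z$ genuinely oscillates on a left half-line, while at the same time $K\le k\phi(w(t))$ has to hold there. Both conditions can be satisfied simultaneously exactly when $k>1/4$, which is precisely the threshold in the statement; this explains why $k=1/4$ is critical. Once $K$ is fixed, the remainder is a standard Sturm--Picone comparison, and no phase-plane analysis beyond the linearization is required at this stage.
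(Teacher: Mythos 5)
Your argument is correct and is essentially the paper's own proof: the paper also writes $k(e^w-1)=k\frac{e^w-1}{w}\,w$, notes that the coefficient tends to $k>\frac14$ as $t\to-\infty$ because $w\to 0$, observes that the linearization \eqref{lineae} oscillates since its eigenvalues $\frac12(1\pm\sqrt{1-4k})$ are non-real, and concludes via Lemma \ref{strumlem}. The only cosmetic point is that Lemma \ref{strumlem} assumes the \emph{strict} inequality $a(t)>b(t)$, so you should take the comparison constant $K$ strictly below $k(1-\varepsilon)$ (still above $\frac14$), which your choice of $\varepsilon$ clearly permits.
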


\begin{proof}[Proof of Theorem \ref{singularth} \rm{(i)} \textit{and Theorem \ref{mainthe}} \rm{(i)}]
We first prove the uniqueness of a singular solution.
Let $(\lambda_{*}, U_{*})$ be a radial singular solution of \eqref{gelfand} for $f(u)=e^u$. 
Then, $v:=U_{*}+\log \lambda_{*}$ is a singular solution of \eqref{apriorieq} satisfying $v(1)=\log \lambda_{*}$. Moreover, we define $w(t):=v(r)-W(r)$ with $t=\log(-\log (r/e))$. Thanks to Lemma \ref{apriorileme} and Lemma \ref{lowerleme}, we have 
\begin{equation*}
-\infty<\limsup_{t\to\infty} w(t)\le C,
\end{equation*}
where $C>0$ is depending only on $k$. Therefore, by \cite[Lemma 3.2]{Mi2020}, we get $w(t)\to 0$ as $t\to\infty$. Moreover, since the real parts of the associated eigenvalues of \eqref{lineae} are positive, we get $w(t)=0$. In particular, since $v(1)=W(1)$, it follows that $(\lambda_{*},U_{*})=(\log k, W-\log k)$. Moreover, we can confirm that $U_{*}\in H^1_{0}(B_1)$.

Next, we show that the bifurcation curve converges to the singular solution. Let $(\lambda,u)=(\lambda(\beta),u(r,\alpha(\beta)))$ be a radial solution of \eqref{gelfand} for $f(u)=e^u$. We define $v=u+\log\lambda$ and $w(t)=v(r)-W(r)$. In addition, we define $\hat{w}(s):=w(t)=w(t,\beta)$ with $s=t-\frac{\beta}{k}+\frac{\log k}{k}$. Then, $\hat{w}$ satisfies \eqref{ew} and the initial condition $\lim_{s\to\infty}(\hat{w}+ks)=0$. Moreover, by Corollary \ref{formeu}, we deduce that $\hat{w}(s)$ is independent of $\beta$. Here, we observe the following Lyapunov function
\begin{equation*}
    \mathcal{L}(\hat{w}(t)):=\frac{1}{2}(\hat{w}')^2+k(e^{\hat{w}}-\hat{w}).
\end{equation*}
Then, we verify that this function is non-decreasing and thus $\hat{w}$ and  $\hat{w}'$ remain bounded as $t\to-\infty$. 
Since $v(r,\beta)=\hat{w}(t-\frac{\beta}{k}+\frac{\log k}{k})+W(r)$, for each $\varepsilon>0$, we obtain
$|v(r,\beta)|, |v'(r,\beta)|<C(\varepsilon)$
for all $r\in[\varepsilon, 2]$ and $\beta>1$, where $C(\varepsilon)>0$ is depending only on $\varepsilon$ and $k$. In addition, for each $\varepsilon>0$, there exists some $\beta(\varepsilon)\in \mathbb{R}$ depending only on $\varepsilon$ and $k$ such that $|v(r,\beta)-W(r)|<C$
for all $r\in[\varepsilon, 2]$ and $\beta>\beta(\varepsilon)$, where $C>0$ is depending only on $k$. Therefore, by the elliptic regularity theory (see \cite{gil}), Arzel\`a-Ascoli theorem, and a diagonal argument, there exist a sequence $\{\beta_n\}_{n\in \mathbb{N}}$ and a singular solution $V\in C^2(0,1]$ of \eqref{ev} such that $\beta_n\to\infty$ and $v(r,\beta_n)\to V$ in $C^2_{\mathrm{loc}}(0,1]$ as $n\to\infty$. By the uniqueness of a singular solution, it follows that $V=W$ and thus we get the result. 

Finally, we prove the oscillation of the bifurcation curve. Thanks to the above argument, we have $\hat{w}(s)\to 0$ as $s\to-\infty$. Since
\begin{equation*}
\lambda(\beta)=e^{W(1)+w(0,\beta)}=ke^{\hat{w}(-\frac{\beta}{k}+\frac{\log k}{k})},
\end{equation*}
by Proposition \ref{osceu}, we deduce that $\lambda(\beta)$ turns around $k$ infinitely many times when $k>1/4$. 
\end{proof}

\subsection{Stability of singular solutions}
In this subsection, we study the stability of the singular solution $W$. From the stability of $W$, we obtain the separation property. As a result, we prove Theorem \ref{mainthe} (ii) and Theorem \ref{sepathm} (i). In order to study the stability of $W$, we introduce the following 
\begin{proposition}
\label{critical prop}
Let $R\ge 1$. Then, for any $\varphi\in C^{0,1}_{0}(B_1)$, we have
\begin{equation}
\label{criticalhardy}
    \frac{1}{4}\int_{B_1}\frac{\varphi^2}{|x|^2(\log(R/|x|))^2}\,dx \le \int_{B_1}|\nabla \varphi|^2\,dx.
\end{equation}
Moreover, for any $\varepsilon>0$, there exists a sequence $\{\varphi_n\}_{n\in \mathbb{N}}\subset C^{0,1}_{0}(B_1)$ such that 
$\mathrm{supp}(\varphi_i)\cap \mathrm{supp}(\varphi_j)=\emptyset$ for $i\neq j$ and
\begin{equation*}
    \frac{1+\varepsilon}{4}\int_{B_1}\frac{\varphi^2_{i}}{|x|^2(\log(R/|x|))^2}\,dx > \int_{B_1}|\nabla \varphi_{i}|^2\,dx \hspace{4mm} \text{for any $i\in \mathbb{N}$}.
\end{equation*}
\end{proposition}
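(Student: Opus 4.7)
The plan is to reduce the two-dimensional inequality to a classical one-dimensional Hardy inequality on the half-line, and then to realize the sharpness by constructing near-extremals that live on disjoint logarithmic shells approaching the origin.

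\textbf{Reduction to one dimension.} For $\varphi\in C^{0,1}_{0}(B_1)$, writing in polar coordinates gives $|\nabla\varphi|^2 = (\partial_r\varphi)^2 + r^{-2}(\partial_\theta\varphi)^2\ge (\partial_r\varphi)^2$, so by Fubini
\begin{equation*}
\int_{B_1}|\nabla\varphi|^2\,dx \ge \int_0^{2\pi}\!\int_0^1 (\partial_r\varphi)^2\,r\,dr\,d\theta.
\end{equation*}
For each fixed $\theta$, I would apply the change of variables $s=\log(R/r)$, i.e.\ $r=Re^{-s}$, which is well-defined because $R\ge 1$; setting $\psi(s):=\varphi(Re^{-s},\theta)$, a direct computation gives
\begin{equation*}
\int_0^1 (\partial_r\varphi)^2\,r\,dr = \int_{\log R}^\infty (\psi'(s))^2\,ds, \qquad
\int_0^1 \frac{\varphi^2}{r(\log(R/r))^2}\,dr = \int_{\log R}^\infty \frac{\psi(s)^2}{s^2}\,ds.
\end{equation*}
Thus \eqref{criticalhardy} reduces to proving the classical one-dimensional inequality $\int_{\log R}^\infty \psi^2/s^2\,ds\le 4\int_{\log R}^\infty (\psi')^2\,ds$ for bounded Lipschitz $\psi$ that vanishes near $s=\log R$.

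\textbf{Proof of the 1D inequality.} Since $(1/s)'=-1/s^2$, integration by parts gives
\begin{equation*}
\int_{\log R}^\infty \frac{\psi^2}{s^2}\,ds = \left[-\frac{\psi^2}{s}\right]_{\log R}^\infty + 2\int_{\log R}^\infty \frac{\psi\psi'}{s}\,ds = 2\int_{\log R}^\infty \frac{\psi\psi'}{s}\,ds,
\end{equation*}
the boundary terms vanishing because $\psi\equiv 0$ near $\log R$ and $\psi$ is bounded (so $\psi^2/s\to 0$ at $\infty$). Cauchy--Schwarz on the right-hand side, followed by dividing by $(\int\psi^2/s^2)^{1/2}$, yields the claim.

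\textbf{Sharpness and disjoint supports.} The formal extremal is $\psi(s)=s^{1/2}$, which is not admissible. I would take the ansatz $\psi_n(s):=s^{1/2}\chi_n(\log s)$, where $\chi_n$ is a trapezoidal smooth cutoff supported on $[a_n,a_n+L]$, identically $1$ on $[a_n+1,a_n+L-1]$, with slope bounded by $1$. After the substitution $t=\log s$ the integrals collapse to
\begin{equation*}
\int(\psi_n')^2\,ds = \tfrac{1}{4}\int \chi_n^2\,dt + \int(\chi_n')^2\,dt, \qquad \int \psi_n^2/s^2\,ds = \int \chi_n^2\,dt,
\end{equation*}
the cross term $\int \chi_n\chi_n'\,dt$ vanishing by integration by parts. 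Hence
\begin{equation*}
\frac{\int(\psi_n')^2}{\int \psi_n^2/s^2} = \frac{1}{4} + \frac{\int(\chi_n')^2}{\int \chi_n^2} \le \frac{1}{4} + \frac{2}{L-2},
\end{equation*}
which is strictly less than $(1+\varepsilon)/4$ once $L$ is chosen large depending only on $\varepsilon$. Choosing the intervals $[a_n,a_n+L]$ pairwise disjoint with $a_n>\log R$ produces radial functions $\varphi_n(x):=\psi_n(\log(R/|x|))\in C^{0,1}_0(B_1)$ supported on disjoint annuli in $B_1$, establishing the second part of the proposition.

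\textbf{Main obstacle.} The only real technical points are verifying the 1D reduction for merely Lipschitz, non-radial $\varphi$ (which requires Fubini to produce admissible slices a.e.\ in $\theta$) and justifying the vanishing of the boundary term at $s=\infty$ when $\varphi$ is not compactly supported away from the origin; boundedness of $\varphi$ handles the latter, and slice-by-slice Lipschitz regularity the former.
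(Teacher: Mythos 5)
Your proof is correct, but it takes a somewhat different route from the paper's. For the inequality \eqref{criticalhardy}, the paper never reduces to one dimension: it performs a ground-state substitution $\xi=(\log(R/|x|))^{-1/2}\varphi$ directly in $B_1$, expands $|\nabla\varphi|^2$, and observes that the cross term $\frac{x}{2|x|^2}\cdot\nabla(\xi^2)$ integrates to zero, leaving a manifestly nonnegative remainder $\int(\log(R/|x|))|\nabla\xi|^2$. You instead drop the angular derivative, pass to $s=\log(R/r)$ on each ray, and invoke the classical 1D Hardy inequality via integration by parts and Cauchy--Schwarz; this is equally valid (the slicing of a Lipschitz function and the finiteness of $\int\psi^2/s^2$ needed to divide are both routine, and the boundary term at $s=\log R$ vanishes by $\psi(\log R)=0$ plus Lipschitz continuity even when $R=1$), and it has the mild advantage of quoting a standard inequality rather than redoing the algebra. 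For the sharpness, the two constructions are close cousins: both place test functions on disjoint annuli of the form $|x|\in[Re^{-e^{b_{n+1}}},Re^{-e^{b_n}}]$ and both multiply the formal extremal $(\log(R/|x|))^{1/2}$ by an oscillation in the doubly-logarithmic variable $t=\log\log(R/|x|)$; the paper uses $\sin(\tfrac{\varepsilon}{2}t)$ on one half-period (which makes the deficit exactly computable as a difference of $\cos^2$ and $\sin^2$ integrals), while you use a long trapezoidal plateau, trading an exact computation for the elementary bound $\int(\chi')^2/\int\chi^2\le 2/(L-2)$. Both yield the desired strict inequality; the only cosmetic slip in your write-up is the containment condition on the supports, which should read $e^{a_n}>\log R$ (equivalently $a_n>\log\log R$ when $R>1$) rather than $a_n>\log R$ --- though your stronger condition is of course also sufficient.
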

We mention that \eqref{criticalhardy} is well-known and called the critical Hardy inequality (see \cite{II, ST} and the references therein).

\begin{proof}
For $\varphi\in C^{0,1}_{0}(B_1)$, we define $\xi=(\log(R/|x|))^{-\frac{1}{2}}\varphi$. Since $\xi(0)=0$ and 
\begin{align*}
    |\nabla \varphi|^2&=|(\log(R/|x|))^{\frac{1}{2}}\nabla \xi-\frac{1}{2}(\log(R/|x|))^{-\frac{1}{2}}\frac{x}{|x|^2}\xi|^2\\
    &=\frac{\xi^2}{4|x|^2 (\log(R/|x|))}+(\log(R/|x|))|\nabla\xi|^2- \frac{x}{2|x|^2}\cdot\nabla (\xi^2)\\
    &=\frac{\varphi^2}{4|x|^2 (\log(R/|x|))^2}+(\log(R/|x|))|\nabla\xi|^2- \frac{x}{2|x|^2}\cdot\nabla (\xi^2),
\end{align*}
we get
\begin{align*}
&\int_{B_1}|\nabla \varphi|^2\,dx-\frac{1+\varepsilon}{4}\int_{B_1}\frac{\varphi^2}{|x|^2(\log(R/|x|))^2}\,dx\\
&=\int_{B_1}(\log(R/|x|))|\nabla\xi|^2\,dx
-\int_{B_1}\frac{\varepsilon\xi^2}{4|x|^2 (\log(R/|x|))}\,dx-\int_{B_1}\frac{x}{2|x|^2}\cdot\nabla (\xi^2)\,dx\\
&=\int_{B_1}(\log(R/|x|))|\nabla\xi|^2\,dx
-\int_{B_1}\frac{\varepsilon\xi^2}{4|x|^2 (\log(R/|x|))}\,dx.
\end{align*}
Thus, we get \eqref{criticalhardy} by setting $\varepsilon=0$. In addition, for any $0<\varepsilon<1$, we take $\xi_{n}(r)=\chi_{[r_{n+1},r_{n}]}(r)\sin t$
with $r=|x|$, $t=\frac{\varepsilon}{2}\log(\log(R/r))$, and 
$r_{n}=Re^{-e^{2\pi n/\varepsilon}}$. Then,
it follows that $\varphi_n:=(\log(R/|x|))^{\frac{1}{2}}\xi_n \in C^{0,1}(B_1)$ and 
\begin{align*}
\int_{B_1}|\nabla \varphi_n|^2\,dx&-\frac{1+\varepsilon}{4}\int_{B_1}\frac{\varphi_{n}^2}{|x|^2(\log(R/|x|))^2}\,dx\\
   &=\int_{B_1}(\log(R/|x|))|\nabla\xi_{n}|^2\,dx
-\int_{B_1}\frac{\varepsilon\xi_{n}^2}{4|x|^2 (\log(R/|x|))}\,dx\\
&=\varepsilon\pi\int_{n\pi}^{(n+1)\pi}\cos^{2} t\,dt - \pi\int_{n\pi}^{(n+1)\pi}\sin^{2} t\,dt<0.
\end{align*}
Thus, we get the result.
\end{proof}

\begin{proof}[Proof of Theorem \ref{morseth} \rm{(i)}]
Take $\varphi\in C^{0,1}_{0}(B_1)$. Then, we deduce that 
\begin{align*}
\mathcal{Q}_{U_{*}}(\varphi)=\int_{B_1}|\nabla \varphi|^2\,dx-\int_{B_1} \lambda_{*}&V_ke^{U_{*}}\varphi^2\,dx= \int_{B_1}|\nabla \varphi|^2\,dx-\int_{B_1}V_k e^{W}\varphi^2\,dx\\
   &=\int_{B_1}|\nabla \varphi|^2\,dx-\int_{B_1}\frac{k}{|x|^2(\log(e/|x|))^2}\varphi^2\,dx.
\end{align*}
Therefore, thanks to Proposition \ref{critical prop}, we get the result.
\end{proof}
\begin{remark}
\label{stre}
\rm{By a similar method to that in the proof of Theorem \ref{morseth} (i), we deduce that the singular solution $W$ is stable in $B_e$ if and only if $k\le 1/4$.}
\end{remark}

Next, we prove the following separation result, which plays a key role in studying the bifurcation structure. 
\begin{proposition}
\label{sepae}
Assume that $k\le\frac{1}{4}$. Let $\beta<\gamma$ and $v(r,\beta)$ be the solution of \eqref{ev}. Then, we have
\begin{itemize}
    \item[{\rm{(i)}}] $v(r,\beta)<W(r)$ in $(0,e)$.
    \item [{\rm{(ii)}}] $v(r,\beta)<v(r,\gamma)$ in $(0,e)$. 
\end{itemize}
\end{proposition}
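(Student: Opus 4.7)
The plan is to prove both parts by contradiction, exploiting the stability of $W$ in $B_e$ guaranteed by Remark \ref{stre} under the assumption $k\le 1/4$ (namely $\int_{B_e}|\nabla\xi|^2\,dx\ge\int_{B_e}V_ke^W\xi^2\,dx$ for all admissible test functions $\xi$), together with the strict convexity of the exponential.

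For (i), since $v(0,\beta)=\beta$ is finite while $W(r)\to+\infty$ as $r\to 0$, one has $v<W$ on a right neighbourhood of the origin. I would argue by contradiction, assuming the existence of a smallest $r_1\in(0,e)$ with $v(r_1,\beta)=W(r_1)$; then $\phi:=W-v$ is positive on $(0,r_1)$, vanishes at $r_1$, and solves $-\Delta\phi=V_k(e^W-e^v)$ in $B_{r_1}$. Strict convexity of $e^s$ gives the pointwise strict bound $e^W-e^v<e^W(W-v)=e^W\phi$ wherever $v<W$. Multiplying by $\phi$, integrating by parts over $B_{r_1}$ (boundary terms at $r\to 0^+$ vanishing thanks to Lemma \ref{apriorileme} and the explicit form of $W$), and extending $\phi$ by zero to $\tilde\phi\in H^1_0(B_e)$, the stability of $W$ applied to $\tilde\phi$ produces
\[
\int_{B_e}V_ke^W\tilde\phi^2\,dx\le\int_{B_e}|\nabla\tilde\phi|^2\,dx=\int_{B_{r_1}}V_k(e^W-e^v)\phi\,dx<\int_{B_{r_1}}V_ke^W\phi^2\,dx=\int_{B_e}V_ke^W\tilde\phi^2\,dx,
\]
a contradiction.

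For (ii), I would differentiate $v(r,\beta)$ in $\beta$; the function $\psi:=\partial_\beta v$ solves the linearised equation $\psi''+r^{-1}\psi'+V_ke^v\psi=0$ with $\psi(0,\beta)=1$, so $\psi>0$ near the origin. If $\psi>0$ on all of $(0,e)$, then $v(r,\gamma)-v(r,\beta)=\int_\beta^\gamma\psi(r,s)\,ds>0$ establishes (ii). To preclude a zero, let $r_0\in(0,e)$ be the smallest one; the truncation $\tilde\psi:=\psi\chi_{B_{r_0}}$ belongs to $H^1_0(B_e)$, and integrating $(-\Delta\psi)\psi=V_ke^v\psi^2$ over $B_{r_0}$ yields $\int_{B_{r_0}}|\nabla\psi|^2\,dx=\int_{B_{r_0}}V_ke^v\psi^2\,dx$. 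Combining part (i) (which gives the strict inequality $v<W$ on $B_e$) with the stability of $W$ then closes the loop:
\[
\int_{B_{r_0}}|\nabla\psi|^2\,dx=\int_{B_{r_0}}V_ke^v\psi^2\,dx<\int_{B_{r_0}}V_ke^W\psi^2\,dx\le\int_{B_e}|\nabla\tilde\psi|^2\,dx=\int_{B_{r_0}}|\nabla\psi|^2\,dx,
\]
again a contradiction.

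The main technical obstacle lies in (i): because $\phi=W-v$ is unbounded at the origin, one must verify that $\tilde\phi\in H^1_0(B_e)$ via the asymptotics $\phi\sim k\log\log(e/r)$ and $|\phi'|\le C/(r\log(e/r))$ as $r\to 0$ (from the explicit form of $W$ together with Lemma \ref{apriorileme}), which keep both $\phi$ and $\nabla\phi$ in $L^2(B_{r_1})$ despite the logarithmic blow-up and make the boundary term $r\phi\phi'$ vanish in the limit $r\to 0^+$. A secondary point is the extension of the stability inequality from $C^{0,1}_0(B_e)$ to $H^1_0(B_e)$ by density, which is legitimate because the critical Hardy inequality of Proposition \ref{critical prop} bounds the weighted term $\int V_k e^W\xi^2\,dx$ by the Dirichlet energy $\int|\nabla\xi|^2\,dx$ whenever $k\le 1/4$.
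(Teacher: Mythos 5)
Your proof is correct and, for part (i), follows the paper's argument essentially verbatim: assume a first touching point, truncate $W-v$, and play the stability of $W$ on $B_e$ (Remark \ref{stre}, valid for $k\le\frac14$) against the convexity inequality $e^W-e^v\le e^W(W-v)$; your extra care about $\tilde\phi\in H^1_0$ and the vanishing of the boundary term $r\phi\phi'$ at the origin is a welcome addition that the paper compresses into the remark that $v,W\in H^1(B_{r_0})$. For part (ii) you take a slightly different route: the paper notes that $v(\cdot,\gamma)$ is itself stable (since it lies below $W$ and $e^{v(\cdot,\gamma)}\le e^W$) and then reruns the first-touching-point argument on the finite difference $v(\cdot,\gamma)-v(\cdot,\beta)$, again using convexity of the exponential, whereas you pass to the infinitesimal difference $\psi=\partial_\beta v$ and exclude a first zero of $\psi$ by the same stability comparison $e^v<e^W$. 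The two versions are interchangeable here; your variant trades the convexity step for the (standard but worth stating) smooth dependence of the solution of the singular initial value problem \eqref{inteqe} on $\beta$, and the paper's variant avoids differentiating in the parameter altogether. Both close by the identical mechanism, so I regard your proposal as a correct proof in the same spirit as the paper's.
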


\begin{proof}
We first prove (i) by contradiction. Thus, we assume that there exists $r_0\in (0,e)$ such that $W(r)-v(r,\beta)>0$ in $(0,r_0)$ and $W(r_0)=v(r_0,\beta)$. We define 
\begin{equation*}
\hat{v}(r)=
\begin{cases}
    W(r)-v(r,\beta) &\text{if $0\le r<r_0$,}\\
    0               &\text{otherwise.}
\end{cases}
\end{equation*}
Since $W$ and $v$ satisfy \eqref{ev} and $v, W \in H^1(B_{r_0})$, we have
\begin{equation*}
    \int_{B_{r_0}}|\nabla \hat{v}|^2\,dx= \int_{B_{r_0}}V_k(|x|)(e^{W}-e^{v})\hat{v}\,dx\le \int_{B_{r_0}}V_k(|x|)e^{W}\hat{v}^2\,dx.
\end{equation*}
On the other hand, by Remark \ref{stre}, we get
\begin{equation*}
\int_{B_{r_0}}V_k(|x|)e^{W}\hat{v}^2\,dx\le \int_{B_{r_0}}|\nabla \hat{v}|^2\,dx.
\end{equation*}
Therefore, we get $\hat{v}=0$, which is a contradiction.

Moreover, thanks to the above assertion, we deduce that $v(r,\gamma)$ is stable for all $\gamma\in \mathbb{R}$. Hence, by using a method similar to the proof of (i), we get (ii). 
\end{proof}

\begin{proof}[Proof of Theorem \ref{mainthe} \rm{(ii)}]
Let us denote by $\cdot$ the differentiation with respect to $\beta$. By Proposition \ref{sepae}, we have $\dot{\lambda}\ge 0$ for all $\beta\in \mathbb{R}$. Thus, it suffices to prove $\dot{\alpha}>0$ for all $\beta\in \mathbb{R}$. 
We prove the assertion by contradiction. Thus, we assume that $\dot{\alpha}(\beta_0)\le 0$ for some $\beta_0 \in \mathbb{R}$.
Since $\alpha=\beta-\log \lambda$, we get $\dot{\lambda}(\beta_0)>0$. By differentiating \eqref{gelfand} with respect to $\beta$, we have
\begin{equation*}
\left\{
\begin{alignedat}{4}
 -\Delta \dot{u}&=\lambda V_k(|x|)e^u\dot{u}+ \dot{\lambda}V_k(|x|)e^u\hspace{4mm} \text{in } B_1,\\
\dot{u}(0)&=\dot{\alpha}, \hspace{4mm} \dot{u}(1)=0,\hspace{4mm}\dot{u}\in C^{0}[0,1]\cap C^{2}_{\mathrm{loc}}(0,1]\cap H^1(B_1).
\end{alignedat}
\right.
\end{equation*}
Hence, by using a similar argument to that in the proof of Lemma \ref{apriorileme} if $\dot{\alpha}(\beta_0)=0$, we deduce that there exists $0<r_0\le 1$ such that $\dot{u}(r,\beta_0)<0$ in $(0,r_0)$ and $\dot{u}(r_0,\beta_0)=0$. We define 
\begin{equation*}
    \hat{u}(r)=
\begin{cases}
    \dot{u}(r,\beta_0) &\text{if $0\le r<r_0$,}\\
    0              &\text{otherwise.}
\end{cases}
\end{equation*}
Since $\dot{\lambda}(\beta_0)>0$, we have 
\begin{align*}
    \int_{B_{1}}|\nabla \hat{u}|^2\,dx&= \int_{B_{1}}\lambda(\beta_0) V_k(|x|)e^{u}\hat{u}^2\,dx+ \dot{\lambda}(\beta_0)\int_{B_{1}}V_k(|x|)e^{u}\hat{u}\,dx\\
    &<\int_{B_{1}}\lambda(\beta_0) V_k(|x|)e^{u}\hat{u}^2\,dx.
\end{align*}
On the other hand, by Proposition \ref{sepae}, we get
\begin{align*}
\int_{B_{1}}\lambda(\beta_0) V_k(|x|)e^{u}\hat{u}^2\,dx= \int_{B_{1}} V_k(|x|)e^{v}\hat{u}^2\,dx&\le \int_{B_{1}} V_k(|x|)e^{W}\hat{u}^2\,dx\\
&\le \int_{B_{1}}|\nabla \hat{u}|^2\,dx, 
\end{align*}
which is a contradiction.
\end{proof}

Finally, we prove Theorem \ref{sepathm} (i).
\begin{proof}[Proof of Theorem \ref{sepathm} \rm{(i)}]
We remark that the result follows from Proposition \ref{sepae} (ii) in the case $k\le \frac{1}{4}$. Thus, it remains the case $k>\frac{1}{4}$. For $0<\beta<\gamma$, we define 
$w(t,\beta)=v(r,\beta)-W(r)$ with $t=\log (-\log(r/e))$. Here, we remark that $w(t,\beta)\to 0$ as $t\to -\infty$ for all $\beta>0$ by the proof of Theorem \ref{singularth} (i). In addition, we define $w_0(t)=w(t,\gamma)-w(t,\beta)$. Then
$w_0$ satisfies
\begin{equation*}
    \frac{d^2}{dt^2}w_0 -\frac{d}{dt}w_0+ k g(t) w_0, \hspace{4mm} t\in \mathbb{R}
\end{equation*}
with
\begin{equation*}
    g(t)=\frac{e^{w(t,\gamma)}-e^{w(t,\beta)}}{w(t,\gamma)-w(t,\beta)}\in C^0(\mathbb{R}).
\end{equation*}
Then, by the fact that $w(t,\beta)\to 0$ as $t\to -\infty$ for all $\beta>0$, we get $kg(t)\to k$ as $t\to -\infty$. Moreover,
we remind that all nontrivial solutions of \eqref{lineae} change sign infinitely many times provided $k>1/4$. Applying the Sturm's comparison theorem (see Lemma \ref{strumlem}), we get the result.
\end{proof}

\section{Power case}
\label{powersec}
In this section, we deal with the case $f(u)=(1+u)^p$. 
We begin by introducing a specific change of variables which is used in \cite{korman2018}. Let $(\lambda,u)$ be a radial solution of \eqref{gelfand} with $\lVert u \rVert_{L^\infty(B_1)}=\alpha$. We define 
$v:=\lambda^{\frac{1}{p-1}}(u+1)$ and $\beta:=\lambda^{\frac{1}{p-1}}(\alpha+1)$. Then, $v$ can be extended on $(0,e)$ such that $v$ satisfies 
\begin{equation}
\label{vp}
\left\{
\begin{alignedat}{4}
&v''+  \frac{1}{r}v' + V_k (r) |v|^p=0, \hspace{4mm}0<r<e,\\
&v(0)=\beta, \hspace{6mm}v\in C^2(0,e)\cap C^0[0,e).
\end{alignedat}
\right.
\end{equation}

In the following, we consider a solution of the equation
\begin{equation}
\label{apriorieqp}
v''+  \frac{1}{r}v' + V_k (r) |v|^p =0, \hspace{4mm}0<r<e, \hspace{4mm} v\in C^2(0,e)
\end{equation}
satisfying
\begin{equation}
\label{zimeip}
    \liminf_{r\to 0} v(r)>0,
\end{equation}
where $V_k(r)$ is that in \eqref{pote}. Here, we say that $v$ is a singular solution of \eqref{apriorieqp} if $v$ satisfies \eqref{apriorieqp} and $\lim_{r\to 0} v(r)=\infty$.

\subsection{A priori estimates}
We begin by introducing the following a priori estimates.
\begin{lemma}
\label{apriorilemmap}
Assume that $v$ is a solution of \eqref{apriorieqp} satisfying \eqref{zimeip}. Then, there exist $C_1>0$ and $C_2>0$ depending only on $k$ and $p$ such that 
\begin{equation*}
\label{aprioriestimate 3-1}
    v(r)\le C_1\left(-\log (r/e)\right)^{\frac{k}{p-1}} \hspace{2mm}\text{and} \hspace{2mm} 0\le -v'(r) \le  \frac{C_2}{r} \left(-\log (r/e)\right)^{\frac{k-p+1}{p-1}}
\end{equation*}
for all $r\in (0,e)$ satisfying $v(r)>0$. Moreover, it follows that
\begin{equation}
    \label{aprioriestimate 3-2}
    -rv'(r)= \int_{0}^{r}sV_{k}(s)|v(s)|^{p}\,ds \hspace{4mm} \text{for all $0<r<e$}.
\end{equation}
\end{lemma}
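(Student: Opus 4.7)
The plan is to adapt the argument of Lemma~\ref{apriorileme} to the power nonlinearity. The structural feature that carries over is that $(rv')'=-rV_k(r)|v|^p\le 0$, so $r\mapsto rv'(r)$ is non-increasing on $(0,e)$. First I would show $v'\le 0$ on $(0,e)$ by contradiction: if $v'(t)>0$ at some $t$, then $sv'(s)\ge tv'(t)$ for $0<s\le t$, and integrating over $(r,t)$ forces $v(r)\to -\infty$ as $r\to 0$, contradicting \eqref{zimeip}.

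Next I would derive the upper bound on $v$. Fix $r\in(0,e)$ with $v(r)>0$; by monotonicity, $v>0$ on $(0,r]$. Integrating $(tv')'=-tV_k(t)v^p$ from $r_0$ to $s\le r$, using $v(t)\ge v(s)$ for $t\le s$, and sending $r_0\to 0$, one gets
\begin{equation*}
-sv'(s)\ge \frac{v(s)^p}{k+1}\bigl(-\log(s/e)\bigr)^{-(k+1)}.
\end{equation*}
Rewriting this as a lower bound on $\tfrac{d}{ds}v^{1-p}(s)$, integrating from $\rho$ to $r$, and passing to the limit $\rho\to 0$ (using $v^{1-p}(\rho)\ge 0$ and $(-\log(\rho/e))^{-k}\to 0$) yields $v(r)^{p-1}\le \tfrac{k(k+1)}{p-1}(-\log(r/e))^{k}$, giving the desired bound with $C_1=(k(k+1)/(p-1))^{1/(p-1)}$.

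For the remaining assertions, I would first show $\lim_{r\to 0}(-rv'(r))=0$: this limit exists in $[0,\infty]$ by monotonicity, and if it were some $c>0$ then $-v'(r)\ge c/(2r)$ near $0$ would force $v(r)$ to grow like $\log(1/r)$, contradicting the bound just obtained (the exponent $k/(p-1)$ is strictly less than $1$ under the working hypothesis $p>p_s=k+1$). Consequently, integrating $(tv')'=-tV_k(t)|v|^p$ over $(r_0,r)$ and letting $r_0\to 0$ yields \eqref{aprioriestimate 3-2}. Finally, for $v(r)>0$, substituting the bound on $v$ into \eqref{aprioriestimate 3-2} and computing $\int_0^{r}sV_k(s)(-\log(s/e))^{kp/(p-1)}\,ds$ via $u=-\log(s/e)$ (an integral of $u^{-(2-k/(p-1))}$, convergent since the exponent exceeds $1$) gives $-v'(r)\le (C_2/r)(-\log(r/e))^{(k-p+1)/(p-1)}$ with $C_2=C_1^{p}(p-1)/(p-1-k)$.

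The main technical point, and the only essential deviation from Lemma~\ref{apriorileme}, is the implicit use of $p>p_s=k+1$: this is exactly what makes $k/(p-1)<1$, which is needed both to contradict $c>0$ in the proof of $rv'(r)\to 0$ and to ensure that the integral yielding the gradient bound converges at $0$. Some bookkeeping around $\{v>0\}$ (a set that contains an interval $(0,r_*)$ by \eqref{zimeip} and monotonicity) is also required for the pointwise bounds on $v$ and $-v'$ to be meaningful, although the identity \eqref{aprioriestimate 3-2} itself holds on the full interval $(0,e)$.
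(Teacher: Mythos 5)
Your proposal is correct and follows essentially the same route as the paper: the monotonicity of $rv'$, the integration of $(rv')'=-rV_k|v|^p$ to get the differential inequality for $v^{1-p}$, the contradiction argument showing $rv'(r)\to 0$ (using $k/(p-1)<1$, i.e.\ $p>p_s$), and the substitution of the pointwise bound into \eqref{aprioriestimate 3-2} to bound $-v'$. Even the constants $C_1=(k(k+1)/(p-1))^{1/(p-1)}$ and $C_2=C_1^p(p-1)/(p-1-k)$ match the paper's computation.
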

We remark that this lemma can be proved by a similar argument to that in the proof of \cite[Lemma 2.1]{Mi2020}. For readers convenience, we show the proof.
\begin{proof}
Since 
\begin{equation}
\label{intep}
    (rv')'=-r V_k(r)|v|^p\le 0 \hspace{4mm}\text{in $(0,e)$,}
\end{equation}
we obtain that $v'\le 0$ in $(0,e)$ by the same argument as in the proof of Lemma \ref{apriorileme}. Next, we take $0<r_0\le s\le r<e$ such that $v(r)>0$. Thanks to the fact that $v'\le 0$ in $(0,e)$, by integrating \eqref{intep} over $(r_0, r)$, we get 
\begin{align*}
    -sv'(s)&=-r_0v'(r_0)+\int_{r_0}^{s}tV_k(t)|v|^p\,dt\\
    &\ge \frac{1}{1+k}|v(s)|^p\left((-\log (s/e))^{-(1+k)}-(-\log (r_0/e))^{-(1+k)}\right).
\end{align*}
Letting $r_0\to 0$, we have
\begin{equation*}
-v^{-p}(s) v'(s)\ge \frac{1}{1+k} s^{-1} (-\log (s/e))^{-(1+k)}.
\end{equation*}
Integrating the above inequality over $(\rho,r)$ and then letting $\rho\to 0$, we get
\begin{equation*}
   v^{p-1}(r) \le \frac{k(1+k)}{p-1} (-\log (r/e))^{k}. 
\end{equation*}
Hence, we obtain 
\begin{equation}
\label{aiyap}
    v(r)\le C_1\left(-\log (r/e)\right)^{\frac{k}{p-1}}
\end{equation}
with some $C_1>0$ depending only on $k$ and $p$. Thus, we obtain that $rv'(r)\to 0$ as $r\to 0$ by the 
same method as in the proof of Lemma \ref{apriorileme}. Therefore, by integrating \eqref{intep} over $(r_0,r)$ and then letting $r_0\to 0$, we get \eqref{aprioriestimate 3-2}. Moreover, by using \eqref{aiyap} again, we have 
\begin{align*}
    -v'(r)\le r^{-1}\int_{0}^{r}sV_k(s)|v|^p\,ds&\le \int_{0}^{r} \frac{C_{1}^p}{rs(-\log(s/e))^{-\frac{k-p+1}{p-1}+1}}\,ds\\
    &\le \frac{C_2}{r}(-\log(r/e))^{\frac{k-p+1}{p-1}}
\end{align*}
for all $0<r<e$ satisfying $v(r)>0$, where $C_2>0$ depends only on $k$ and $p$. 
Thus, we get the result.
\end{proof}
As a result of Lemma \ref{apriorilemmap}, we obtain an existence/uniqueness result for the solution of \eqref{vp}.

\begin{corollary}
\label{formup}
Let $k>0$ and $\beta>0$. Then, the equation \eqref{vp} has a unique solution $v=v(r,\beta)\in C^2(0,e)\cap C^0[0,e)$. Moreover, $v\in H^1(B_{r_0})$ for all $0<r_0<e$. In addition, $v$ satisfies
\begin{equation*}
\label{inteqp}
    v(r,\beta)=\beta-\int_{0}^{r} \int_{0}^{s}\frac{t}{s}V_k(t)|v(t)|^{p}\,dt\,ds.
\end{equation*}
\end{corollary}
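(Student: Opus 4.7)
The plan is to mirror the proof of Corollary \ref{formeu} in the exponential case, adapting the three-step structure to the nonlinearity $|v|^p$.

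First, I will derive the integral representation from a classical solution. Suppose $v\in C^2(0,e)\cap C^0[0,e)$ satisfies \eqref{vp}. Since $\beta>0$ and $v$ is continuous at $0$, we have $v>\beta/2>0$ on some $[0,r_1]$, so condition \eqref{zimeip} is met and Lemma \ref{apriorilemmap} applies. Identity \eqref{aprioriestimate 3-2} then gives $-v'(s)=s^{-1}\int_0^{s}tV_k(t)|v(t)|^p\,dt$ for all $0<s<e$. Integrating over $(\rho,r)$ and letting $\rho\to 0$, using the continuity $v(\rho)\to\beta$, yields the stated integral identity.

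Second, I will establish existence and uniqueness via a contraction-mapping argument on a small interval near $r=0$, followed by continuation. The key observation, obtained exactly as in the proof of Lemma \ref{apriorilemmap}, is the estimate $\int_0^{s}tV_k(t)\,dt\le C(-\log(s/e))^{-(1+k)}$, which is integrable in $s$ near $0$. Therefore, on a sufficiently small $[0,r_1]$ the map
\[
T[v](r):=\beta-\int_{0}^{r}\int_{0}^{s}\frac{t}{s}V_k(t)|v(t)|^{p}\,dt\,ds
\]
is a contraction on the closed ball of radius $\beta/2$ around the constant function $\beta$ in $C^0[0,r_1]$, using that $w\mapsto |w|^p$ is Lipschitz on $[\beta/2,3\beta/2]$ since $p>1$. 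This produces a unique local solution $v\in C^0[0,r_1]$, which is automatically in $C^2(0,r_1)$ by differentiation of the integral equation and satisfies \eqref{vp} on $(0,r_1)$. Beyond $r_1$, equation \eqref{apriorieqp} is a standard second-order ODE with coefficients smooth on $(0,e)$ and $C^1$-nonlinearity $|v|^p$ (as $p>1$), so $v$ extends uniquely to the maximal interval $(0,r_\mathrm{max})$ in $(0,e)$ on which it remains bounded. The a priori bound of Lemma \ref{apriorilemmap} prevents blow-up so long as $v>0$, and should $v$ become $\leq 0$ it can only decrease further (since $v'\le 0$), keeping $|v|$ controlled; hence $r_\mathrm{max}=e$.

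Third, I will verify $v\in H^1(B_{r_0})$ for every $0<r_0<e$. Away from $0$ this is automatic from $v\in C^2$, so only the origin is an issue. Since $v\in C^0[0,r_0]$ is bounded, $|v(r)|\le M$ on $[0,r_0]$, and \eqref{aprioriestimate 3-2} yields
\[
|v'(r)|\le M^p r^{-1}\int_{0}^{r}sV_k(s)\,ds\le C r^{-1}(-\log(r/e))^{-(1+k)}.
\]
Writing $\int_{B_{r_0}}|\nabla v|^2\,dx=2\pi\int_0^{r_0}r|v'(r)|^2\,dr$ and substituting $u=-\log(r/e)$ reduces the integral to $\int_{-\log(r_0/e)}^{\infty}u^{-2(1+k)}\,du$, which converges because $2(1+k)>1$.

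The main obstacle is the fixed-point argument at $r=0$, where the singularity of $V_k$ at the origin and the fact that $|v|^p$ is only $C^1$ (not smooth) away from $v=0$ must be handled simultaneously; however, the positivity of $\beta$ keeps us bounded away from $v=0$ on a small interval, reducing the nonlinear term to a Lipschitz one and the singular kernel is tamed by the explicit bound above.
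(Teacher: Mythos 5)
Your overall strategy is exactly the paper's: the paper proves this corollary by deriving the integral identity from Lemma \ref{apriorilemmap} (via \eqref{aprioriestimate 3-2} and integration from $\rho$ to $r$ with $\rho\to 0$) and then invoking ``a standard ODE argument'' for existence and uniqueness, just as in Corollary \ref{formeu}. Your first step, your contraction-mapping construction near $r=0$ (where the positivity $v\ge\beta/2$ makes $w\mapsto|w|^p$ Lipschitz and the kernel bound $\frac{1}{s}\int_0^s tV_k\,dt\le \frac{C}{s}(-\log(s/e))^{-(1+k)}$ is integrable because $k>0$), and your $H^1$ computation are all correct and are faithful expansions of what the paper leaves implicit.

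The one place where your added detail is actually wrong is the continuation argument past a zero of $v$. You write that ``should $v$ become $\le 0$ it can only decrease further (since $v'\le 0$), keeping $|v|$ controlled.'' This is backwards: once $v<0$, further decrease makes $|v|$ \emph{grow}, and since the forcing $V_k(r)|v|^p$ is superlinear in $|v|$ (and $V_k(r)\to\infty$ as $r\to e$), monotonicity alone does not rule out $v\to-\infty$ at some $r^*<e$. Indeed, setting $G=-v$ in the region $v<0$ one only gets $G''\le V_k G^p - G'/r$, which yields $G'\le C(1+G^{(p+1)/2})$ on compact subintervals, an inequality compatible with finite-$r$ blow-up; a genuine argument (e.g.\ via the Emden--Fowler variable $t=\log(-\log(r/e))$ and the Lyapunov function of Proposition \ref{importantprop}, or a Gronwall-type bound exploiting the precise decay of $\int_0^r sV_k\,ds$) is needed to extend the solution to all of $(0,e)$ after $v$ changes sign. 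To be fair, the paper's own proof does not supply this step either --- it is buried in ``a standard ODE argument'' --- but your explicit justification, as stated, is a non sequitur and should be replaced rather than merely tightened.
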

\begin{proof}
Thanks to Lemma \ref{apriorilemmap}, we can get the result by using a similar argument to that in the proof of Corollary \ref{formeu}.
\end{proof}

\subsection{An Emden-Fowler type transformation}
In this section, we obtain a new Emden-Fowler type transformation and prove the uniqueness of a singular solution and the oscillation of the bifurcation curve. At first, we mention that 
\begin{equation*}
    W(r)= \left(\theta(1-\theta)\right)^{\frac{\theta}{k}}(-\log (r/e))^\theta \hspace{4mm}\text{with} \hspace{4mm} \theta=\frac{k}{p-1} 
\end{equation*}
is a singular solution of \eqref{apriorieqp} provided $p>p_s:=k+1$.

Let $v$ be a solution of \eqref{apriorieqp} 
satisfying \eqref{zimeip}.
For any $r\in (0,e)$, we apply the Emden-Fowler type transformation  
 \begin{equation}
 \label{transep}
 w(t)=W^{-1}(r)v(r)-1\hspace{4mm}\text{with} \hspace{2mm}  t=\log(-\log (r/e)). 
 \end{equation}
Then, $w$ satisfies 
\begin{equation}
\label{pw}
    \frac{d^2}{dt^2}w+(2\theta-1)\frac{d}{dt}w+\theta(1-\theta)(|w+1|^p-(w+1))=0, \hspace{4mm} t\in \mathbb{R}.
\end{equation}
By using the transformation, we prove the following estimate for singular solutions.
\begin{lemma}
\label{lowerlemp}
Assume that $p>p_c:=2k+1$. Let $v$ be a singular solution of \eqref{apriorieqp}. Then, we have 
\begin{equation*}
    \limsup_{r\to 0} (-\log (r/e))^{-{\frac{k}{p-1}}}v>0.
\end{equation*}
\end{lemma}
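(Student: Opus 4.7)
The plan is to argue by contradiction using the Emden--Fowler variables from \eqref{transep}, together with a monotonicity constraint from Lemma \ref{apriorilemmap} and a Lyapunov/hyperbolic-saddle analysis of the resulting autonomous ODE.

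First I would put $u(t):=w(t)+1=W(r)^{-1}v(r)$ with $t=\log(-\log(r/e))$, so that by \eqref{pw} the strictly positive $u$ satisfies
\begin{equation*}
u''+(2\theta-1)u'+\theta(1-\theta)(u^{p}-u)=0,\qquad t\in\mathbb{R}.
\end{equation*}
Since $W(r)=(\theta(1-\theta))^{\theta/k}(-\log(r/e))^{\theta}=:c\,e^{\theta t}$, the conclusion of the lemma is equivalent to $\limsup_{t\to\infty}u(t)>0$, and the hypothesis $p>p_c=2k+1$ translates into $\theta=k/(p-1)<\tfrac12$. Suppose for contradiction that $u(t)\to 0$ as $t\to\infty$.

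The next ingredient is a geometric constraint. Lemma \ref{apriorilemmap} yields $v'(r)\le 0$, so $v(r)=c\,e^{\theta t}u(t)$ is non-decreasing in $t$, which gives $u'(t)\ge-\theta\,u(t)$ and hence $\liminf_{t\to\infty}u'(t)\ge 0$.

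The heart of the argument is the Lyapunov functional
\begin{equation*}
\mathcal{L}(t):=\tfrac12(u'(t))^{2}+\theta(1-\theta)\Bigl(\tfrac{u(t)^{p+1}}{p+1}-\tfrac{u(t)^{2}}{2}\Bigr),
\end{equation*}
satisfying $\mathcal{L}'(t)=(1-2\theta)(u'(t))^{2}\ge 0$ thanks to $\theta<\tfrac12$. I would prove the sign bound $\mathcal{L}(t)\le 0$ for every $t$: otherwise $\mathcal{L}(t_{*})>0$ at some $t_{*}$; since the potential part of $\mathcal{L}$ tends to $0$ with $u$, monotonicity gives $(u'(t))^{2}\ge\tfrac32\mathcal{L}(t_{*})$ for all large $t$; combined with $u'(t)\ge-\theta u(t)=o(1)$, the square root must take the positive sign, i.e.\ $u'(t)\ge\sqrt{3\mathcal{L}(t_{*})/2}$ eventually, which forces $u(t)\to\infty$, a contradiction. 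Since $\mathcal{L}$ is non-decreasing, bounded above by $0$, and bounded below by $E(u(t))\to 0$, I would conclude $\mathcal{L}(t)\to 0$ and therefore $(u'(t))^{2}=2(\mathcal{L}(t)-E(u(t)))\to 0$.

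At this stage $(u(t),u'(t))\to(0,0)$, which is a hyperbolic saddle of the phase-plane system with eigenvalues $-\theta<0<1-\theta$ and stable eigenvector $(1,-\theta)$. A standard stable-manifold/linearization argument then yields $u(t)\sim Ce^{-\theta t}$ for some $C>0$, and consequently
\begin{equation*}
v(r)=W(r)u(t)=c\,e^{\theta t}u(t)\longrightarrow cC<\infty,
\end{equation*}
contradicting $v(r)\to\infty$ as $r\to 0$.

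The hard part will be the sign bound $\mathcal{L}\le 0$: since $\mathcal{L}$ is only known to be monotone, such a bound does not come for free, and it is the geometric inequality $u'\ge-\theta u$ (coming from $v'\le 0$) that prevents $\mathcal{L}$ from becoming positive while $u$ tends to zero. Once $(u,u')\to(0,0)$ has been established, the concluding hyperbolic-equilibrium step is standard ODE theory.
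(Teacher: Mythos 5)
Your argument is correct in substance but follows a genuinely different route from the paper's. The paper also argues by contradiction from $(-\log(r/e))^{-\theta}v\to 0$, but it stays essentially in the original variable $r$: it first upgrades the hypothesis to the monotonicity $\left((-\log(r/e))^{-\theta}v\right)'\ge 0$ near the origin, uses this to absorb $v$ into the integral identity \eqref{aprioriestimate 3-2} and deduce $v(s)\le C(-\log(s/e))^{\delta}$ for an arbitrarily small $\delta>0$, and then feeds this back into \eqref{aprioriestimate 3-2} to conclude that $v'$ is integrable at $0$, so $v$ stays bounded, contradicting singularity; this is entirely elementary (two rounds of integral estimates) and never needs the phase plane. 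Your proof instead runs the contradiction through the autonomous system \eqref{pw}: the Lyapunov monotonicity available for $\theta<\tfrac12$, combined with the constraint $u'\ge-\theta u$ coming from $v'\le 0$, forces $(u,u')\to(0,0)$, and the saddle at the origin then pins the decay rate. Your handling of the Lyapunov step is fine (and the sign constraint is used correctly there). The one place you are too quick is the final step, which is in fact where the real work of your route lies: note first that you only need $e^{\theta t}u(t)$ to remain \emph{bounded} (even $C=0$ yields a contradiction, since then $v\to cC<\infty$), but obtaining boundedness is not a consequence of linearization in the Hartman--Grobman sense, which gives only a topological conjugacy and no rate; likewise the crude statement $t^{-1}\log u(t)\to-\theta$ still allows $e^{\theta t}u\to\infty$. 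What is needed is the quantitative asymptotics at a hyperbolic saddle --- Levinson-type asymptotic integration, or a bootstrap in the eigencoordinates $a=(1-\theta)u-u'$, $b=u'+\theta u$ exploiting that the nonlinear term is $O(u^{p})$ with $p>1$ (here the sign $b\ge 0$ from $v'\le 0$ is again helpful). This is classical and does go through, so I would not call it a gap, but it deserves at least as much care as the bound $\mathcal{L}\le 0$ that you single out as the hard part. The trade-off, then, is that the paper's proof is self-contained and short, while yours exposes the dynamical structure (the same saddle analysis that drives Proposition \ref{importantprop}) at the cost of invoking nontrivial ODE asymptotics.
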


\begin{proof}
We prove the assertion by contradiction. Therefore, we assume that $(-\log (r/e))^{-\theta}v\to 0$ as $r\to 0$ with $\theta=\frac{k}{p-1}$.
Then, we claim that there exists $r_1>0$ such that $v(r)>0$ for $0<r<r_1$ and 
\begin{equation}
\label{asyeq}
    \left((-\log (r/e))^{-\theta}v\right)' \ge 0 \hspace{4mm} \text{for all $0<r<r_1$.}
\end{equation}
Indeed, we define $w$ as that in \eqref{transep}. Then, it follows from the above assumption that $w(t)\to -1$ as $t\to\infty$. Moreover,
since $w$ satisfies \eqref{pw}, we have $w(t)>-1$ for $t_1<t$ and 
\begin{equation*}
    (e^{(2\theta-1)t}w')'=-e^{(2\theta-1)t}\theta(1-\theta)(|w+1|^p-(w+1))\ge0 \hspace{4mm}\text{for $t_1<t$}
\end{equation*}
with some large $t_1$. Therefore, by using a similar argument to that in the proof of Lemma \ref{apriorileme}, we get
\eqref{asyeq}.

Let $0<\varepsilon<\frac{k(1-\theta)}{2p}$. Then, there exists $0<r_0<r_1$ such that 
\begin{equation*}
   rV_k(r)|v|^{p-1}= \frac{1}{r(\log (r/e))^2} 
   |(-\log (r/e))^{-\theta}v|^{p-1}
   <\frac{\varepsilon}{r(\log (r/e))^2}, \hspace{1mm} 0<r<r_0.
\end{equation*}
Hence, by Lemma \ref{apriorilemmap} and \eqref{asyeq}, we have 
\begin{align*}
 -v'&(s)=\frac{1}{s} \int_{0}^{s}tV_k(t)|v(t)|^p\,dt\le \frac{\varepsilon}{s}\int_{0}^{s}\frac{v}{t(\log (t/e))^2}\,dt\\
 &\le \frac{\varepsilon v(s)}{s(-\log (s/e))^{\theta}}
\int_{0}^{s}\frac{1}{t(-\log (t/e))^{2-\theta}}\,dt\le 
 \frac{-\varepsilon v(s)}{(1-\theta)s\log (s/e)}, \hspace{1mm} 0<s<r_0.
\end{align*}
Thanks to the above estimate, we get
\begin{equation*}
    \left((-\log (s/e))^{-\delta}v\right)'=(-\log (s/e))^{-\delta}(v'-\delta v(s\log (s/e))^{-1})\ge 0, \hspace{4mm} 0<s<r_0,
\end{equation*}
where $\delta=\frac{\varepsilon}{1-\theta}$. Therefore, we deduce that 
\begin{equation*}
|v(s)|^p\le (-\log (r_0 /e))^{-p\delta}v(r_0)^p(-\log (s/e))^{p\delta}\le C (-\log (s/e))^{p\delta}, \hspace{2mm} 0<s<r_0,  
\end{equation*}
where $C>0$ is a constant. Accordingly, by using Lemma \ref{apriorilemmap} again, we have
\begin{align*}
    -v'(s)=\int_{0}^{s}\frac{t}{s}V_k(t)|v(t)|^p\,dt&\le \int_{0}^{s}\frac{C}{st(-\log (t/e))^{\frac{k}{2}+2}}\,dt
    \le \frac{C}{s(-\log (t/e))^{\frac{k}{2}+1}}.
\end{align*}
Integrating the above inequality over $(\rho, r_0)$, we get
\begin{equation*}
    v(\rho)-v(r_0)\le \frac{C}{{r_0(-\log (r_0/e))^{\frac{k}{2}}}}, 
\end{equation*}
which contradicts the assumption that $v(r)\to \infty$ as $r\to 0$.
\end{proof}

Next, we observe the linearized equation
\begin{equation}
\label{lineap}
    \frac{d^2}{dt^2}w+(2\theta-1)\frac{d}{dt}w+k(1-\theta)w=0.
\end{equation}
The associated eigenvalues are given by
\begin{equation*}
    \lambda_{\pm}=\frac{1}{2}\left((1-2\theta)\pm \sqrt{4\theta^2+4(k-1)\theta+1-4k}\right).
\end{equation*}
Hence, all nontrivial solutions of \eqref{lineap} change sign infinitely many times provided 
\begin{equation*}
    4\theta^2+4(k-1)\theta+1-4k<0 \hspace{6mm}\iff \hspace{6mm} p^{-}_{\mathrm{JL}}<p<p^{+}_{\mathrm{JL}}.
\end{equation*}
Applying the Sturm's comparison theorem (see Lemma \ref{strumlem}), we have the following 
\begin{proposition}
\label{oscup}
All nontrivial solutions of \eqref{pw} satisfying $w\to 0$ as $t\to -\infty$ change sign infinitely many times when $p^{-}_{\mathrm{JL}}<p<p^{+}_{\mathrm{JL}}$.
\end{proposition}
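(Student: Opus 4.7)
The strategy is to eliminate the first-order drift in \eqref{pw} and then invoke Sturm's comparison theorem against a constant-coefficient oscillator. Define
\[
g(s) := \theta(1-\theta)\,\frac{|s+1|^p-(s+1)}{s}\quad(s\ne 0),\qquad g(0):=\theta(1-\theta)(p-1)=k(1-\theta),
\]
so that $g\in C^0(\mathbb{R})$ and \eqref{pw} can be rewritten as $w''+(2\theta-1)w'+g(w(t))\,w=0$. The substitution $w(t)=e^{-(2\theta-1)t/2}z(t)$ removes the first-order term and yields
\[
z''+\Big[g(w(t))-\tfrac{(2\theta-1)^2}{4}\Big]z=0.
\]

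The key algebraic observation is the identity
\[
k(1-\theta)-\tfrac{(2\theta-1)^2}{4}=-\tfrac{1}{4}\bigl(4\theta^2+4(k-1)\theta+1-4k\bigr),
\]
which shows that the hypothesis $p^-_{\mathrm{JL}}<p<p^+_{\mathrm{JL}}$ is exactly the statement $c_0:=k(1-\theta)-(2\theta-1)^2/4>0$. Since $w(t)\to 0$ as $t\to-\infty$ and $g$ is continuous at $0$ with $g(0)=k(1-\theta)$, for any $\varepsilon\in(0,c_0)$ one obtains $T\in\mathbb{R}$ such that $g(w(t))-(2\theta-1)^2/4\ge c_0-\varepsilon$ for all $t<T$. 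The constant-coefficient comparison equation $\tilde z''+(c_0-\varepsilon)\tilde z=0$ has solutions whose zeros in $(-\infty,T)$ are spaced by $\pi/\sqrt{c_0-\varepsilon}$, so infinitely many of them accumulate at $-\infty$. Sturm's comparison theorem (Lemma \ref{strumlem}) then forces $z$, and therefore $w$, to possess at least one zero between any two consecutive zeros of the comparison solution, producing infinitely many zeros of $w$ in $(-\infty,T)$.

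It remains to upgrade ``zeros'' to ``sign changes''. Viewing $g(w(\cdot))$ as a continuous $t$-dependent coefficient, $w$ solves a linear homogeneous second-order ODE, so by standard uniqueness a nontrivial solution cannot admit a double zero; hence each of the zeros produced above is a genuine sign change. The main technical point one must verify carefully is that $g(w(t))$ really does approach the linearization constant $k(1-\theta)$ as $t\to-\infty$, but this is immediate from continuity of $g$ together with the standing hypothesis $w\to 0$, with no quantitative decay rate required. The entire argument is structurally parallel to the proof of Proposition \ref{osceu}, with \eqref{lineap} in the role of \eqref{lineae} and the extra first-order term removed by the exponential substitution above.
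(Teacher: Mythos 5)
Your proof is correct and follows essentially the same route as the paper: linearize \eqref{pw} at $w=0$, observe that the oscillation condition $4\theta^2+4(k-1)\theta+1-4k<0$ is exactly $p^{-}_{\mathrm{JL}}<p<p^{+}_{\mathrm{JL}}$, and conclude by Sturm comparison against a constant-coefficient oscillator, using the openness of the condition to absorb the fact that $g(w(t))$ only tends to $k(1-\theta)$. The only cosmetic difference is that you remove the drift term via the Liouville substitution before comparing, whereas Lemma \ref{strumlem} is already formulated to handle the first-order term $(2\theta-1)w'$ directly through the integrating factor $e^{qt}$; your version also spells out the $\varepsilon$-room and the no-double-zero argument, which the paper leaves implicit.
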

Let $v=v(r,\beta)$ be a solution of \eqref{vp} with $\beta>0$. We define $\hat{w}(s):=w(t)=w(t,\beta)$ with $s=t-\frac{\log \beta}{\theta}$, where $w$ is that in \eqref{transep}.
Then, $\hat{w}$ is a solution of \eqref{pw} satisfying the initial condition 
\begin{equation}
\label{initialp}
\lim_{s\to\infty}e^{s\theta}(\hat{w}+1)=\left(\theta(1-\theta)\right)^{-\theta/k}.    
\end{equation}
By Corollary \ref{formup}, we verify that $\hat{w}$ is independent of $\beta$.
Then, we have the following
\begin{proposition}
\label{importantprop}
Let $\hat{w}$ be a solution of \eqref{pw}
satisfying \eqref{initialp}. Then,
\begin{itemize}
    \item[{\rm{(i)}}] when $p\ge p_c$, we have $\hat{w}>-1$ in $\mathbb{R}$. Moreover, $\hat{w}$ and $\hat{w}'$ remain bounded as $t\to -\infty$. In addition,  $\liminf_{t\to-\infty}\hat{w}(t)=c-1$ for some $c>0$ if $p>p_c$.
    
    \item[{\rm{(ii)}}] when $p_s<p<p_c$, there exists $-\infty<t_0<t_1<\infty$ such that $\hat{w}'(t)<0$ for $t_1<t$, $\hat{w}'(t_1)=0$, $\hat{w}'(t)>0$ for $t_0\le t<t_1$, $\hat{w}(t)>-1$ for $t_0<t$, and $\hat{w}'(t_0)=-1$.
    \item [{\rm{(iii)}}] when $p=p_c$, there exists $-\infty<t_1<\infty$ such that $\hat{w}'(t)<0$ for $t_1<t$, $\hat{w}'(t_1)=0$, $\hat{w}'(t)>0$ for $t<t_1$, and $\lim_{t\to-\infty}\hat{w}(t)=-1$.
    \end{itemize}
\end{proposition}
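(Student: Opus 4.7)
I plan to rewrite \eqref{pw} in autonomous Hamiltonian form by setting $u := \hat{w} + 1$, which satisfies
\begin{equation*}
u'' + (2\theta - 1) u' + \theta(1-\theta)(|u|^p - u) = 0,
\end{equation*}
and by \eqref{initialp} admits the asymptotics $u(t) \sim c_0 e^{-\theta t}$ as $t \to \infty$ with $c_0 = (\theta(1-\theta))^{-\theta/k}$; in particular $u, u' \to 0$ as $t \to \infty$. Introducing the potential $G(u) := \int_0^u (|s|^p - s)\,ds$ and the energy $E(t) := \tfrac{1}{2}(u')^2 + \theta(1-\theta) G(u)$, a direct computation gives $E'(t) = -(2\theta - 1)(u')^2$, so the sign of $2\theta - 1$ (equivalently, of $p_c - p$) controls the monotonicity of $E$, and the asymptotics yield $E(+\infty) = 0$. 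The potential $G$ vanishes exactly at $0$ and at a unique $u_+ > 1$ with $u_+^{p-1} = (p+1)/2$, is negative on $(-\infty, 0) \cup (0, u_+)$ and positive on $(u_+, \infty)$. The associated planar system has a saddle at $(0, 0)$ (whose one-dimensional stable manifold carries the orbit, tangent to $u' = -\theta u$) and a second equilibrium at $(1, 0)$.

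For part (i), $p \geq p_c$ gives $2\theta - 1 \leq 0$, so $E$ is non-decreasing and $E(t) \leq 0$ for all $t$. This forces $G(u) \leq 0$, confining $u$ to $(-\infty, u_+]$; the level $u = 0$ cannot be attained because uniqueness at the equilibrium $(0, 0)$ would then force $u \equiv 0$. Hence $u > 0$, giving $\hat{w} > -1$, and $(u')^2 \leq 2\theta(1-\theta)|G(u)|$ stays bounded. When $p > p_c$ strictly, $E$ is strictly monotone off $\{u' = 0\}$, and LaSalle's invariance principle applied backward identifies the $\alpha$-limit set with a single equilibrium; the candidate $(0, 0)$ is excluded since $E(-\infty) = E(+\infty) = 0$ would force $E \equiv 0$, hence $u' \equiv 0$. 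So the $\alpha$-limit is $(1, 0)$, giving $u(t) \to 1$ and $\liminf \hat{w} = 0$, i.e., $c = 1$. For part (iii), $p = p_c$ yields $E \equiv 0$, reducing the problem to the first-order autonomous equation $(u')^2 = -G(u)/2$; a sign analysis around the zeros of $G$, combined with the exponential approach $u \sim Ce^{t/2}$ governed by the saddle eigenvalue, produces the claimed profile.

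For part (ii), $p_s < p < p_c$ (so $\theta > 1/2$), $E$ is non-increasing and $E \geq 0$. The key structural step is the existence of the first backward critical time $t_1$ of $u$. I parameterize the orbit by $u$ on the region $u' < 0$ and set $z := (u')^2$; then
\begin{equation*}
\frac{dz}{du} = 2(2\theta - 1)\sqrt{z} - 2\theta(1-\theta)(u^p - u).
\end{equation*}
Starting from $z \sim \theta^2 u^2$ near $u = 0$, a scaling comparison shows that $z$ cannot stay above the threshold $z^*(u) := (\theta(1-\theta)(u^p - u)/(2\theta - 1))^2$, which grows like $u^{2p}$; once $z < z^*$ we have $dz/du < 0$ with decrease rate of order $u^p$, forcing $z$ to hit $0$ at some finite $u_* > u_+$, at which the corresponding $t_1$ is a strict local maximum since $u''(t_1) = -\theta(1-\theta)(u_*^p - u_*) < 0$. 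Tracing backward from $t_1$ in the region $u > 0$, $u' > 0$: if $u$ remained positive, its backward limit $M$ would be an equilibrium, but $M = 1$ contradicts $E \geq 0$ (since $E(-\infty) = \theta(1-\theta)G(1) < 0$) and $M = 0$ contradicts $E(-\infty) \geq E(t_1) > 0$. Hence $u(t_0) = 0$ at some finite $t_0 < t_1$, giving $\hat{w}(t_0) = -1$.

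The main obstacle is the backward-blow-up exclusion in part (ii): the bare inequality $E \geq 0$ is insufficient because $G(u) \to \infty$ permits $E$ to be unbounded backward. The $(u, z)$-plane comparison sketched above provides the sharp quantitative control needed to force $z$ down to $0$ at finite $u$, exploiting the specific balance between the anti-friction coefficient $(2\theta - 1)$ and the superlinear term $(u^p - u)$ characteristic of the regime $p < p_c$.
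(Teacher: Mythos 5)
Your proof is correct in its essentials and rests on the same central object as the paper: your energy $E$ is exactly the paper's Lyapunov function $\mathcal{L}$, with the same monotonicity dichotomy governed by the sign of $2\theta-1$, and the same exclusion of $u=0$ via uniqueness at the saddle. The genuine differences lie in two sub-steps. First, for the existence of the turning time $t_1$ in cases (ii)--(iii), the paper argues by contradiction, using the a priori bound of Lemma \ref{apriorilemmap} (through $v(1,\beta)>0$ for all $\beta$) to force the monotone backward orbit to converge, and then reads off a contradiction from the ODE; you instead stay entirely at the ODE level, comparing $z=(u')^2$ against the nullcline $z^{*}(u)$ and exploiting that $\sqrt{z}$ grows at most linearly in $u$ (since $d\sqrt{z}/du\le 2\theta-1$ for $u\ge 1$) while $\sqrt{z^{*}}$ grows like $u^{p}$. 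This is self-contained and handles backward blow-up explicitly, which the paper's route sidesteps by importing the PDE estimate. Second, for $p>p_c$ you prove the stronger statement $\hat{w}(t)\to 0$ as $t\to-\infty$ by a backward LaSalle argument, whereas the proposition only asserts $\liminf\hat{w}>-1$; the paper defers the full convergence to the proof of Theorem \ref{singularth} (ii), where it is obtained by compactness and uniqueness of the singular solution. A few details you should still write out: in (ii), rule out a second zero of $u'$ at some $u\in(0,u_{*})$ before $u$ reaches $0$ (this follows from $E$ non-increasing together with $G$ increasing on $(1,\infty)$ and $G\le 0$ on $[0,u_{+}]$); justify that the bounded monotone backward orbit limits onto an equilibrium by the standard subsequence argument the paper spells out; and check that the parameter value $u_{*}$ is attained at a finite time $t_1$, which holds since $z(u)\sim C(u_{*}-u)$ near $u_{*}$ makes $\int du/\sqrt{z}$ converge. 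Also, the identity in (iii) should read $(u')^2=-2\theta(1-\theta)G(u)$ rather than $-G(u)/2$.
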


\begin{proof}
We define the Lyapunov function
\begin{equation*}
\mathcal{L}(\hat{w}(t)):= \frac{1}{2}(\hat{w}')^2 + \theta(1-\theta)\left(\frac{1}{p+1}|\hat{w}+1|^p(\hat{w}+1)-\frac{1}{2}(\hat{w}+1)^2\right).
\end{equation*}
We remark that $2\theta-1\le 0$ if and only if $p>p_c:=2k+1$. Thus, we deduce that $L$ is increasing if $p>p_c$, constant if $p=p_c$, and decreasing if $p<p_c$. Moreover, we have $L(t)\to 0$ as $t\to\infty$ by the initial condition. Therefore, $\hat{w}(t)>-1$ for all $t\in \mathbb{R}$, and $\hat{w}$ and $\hat{w}'$ remain bounded as $t\to -\infty$ when $p\ge p_c$. In particular, when $p>p_c$
since $L$ is increasing, we have $\liminf_{t\to-\infty}\hat{w}>-1$. Thus, we get the assertion (i).

Next, we consider the case $p\le p_c$. In this case, since $L$ is non-increasing, we deduce that $w'(t)\neq 0$ if $-1<w(t)\le 0$.
We define 
\begin{equation*}
t_1=\inf\{t; \hat{w}'(s)<0 \hspace{2mm}\text{for $s>t$}\}\in [-\infty,\infty).     
\end{equation*}
Here, we prove that $t_1>-\infty$ by contradiction. Thus, we assume that $t_1=-\infty$. Then, we verify that $v(1,\beta)>0$ for all $\beta$.
Since $\hat{w}(-\frac{\log \beta}{\theta})=W^{-1}(1)v(1,\beta)-1$, 
by Lemma \ref{apriorilemmap}, we get $\lim_{t\to-\infty}\hat{w}=C<\infty$. In particular, it follows that $\liminf_{t\to-\infty}|\hat{w}'|=0$. As a result, by the fact that $L$ is non-increasing, we have $C>0$. Moreover, we claim that $\limsup_{t\to-\infty}|\hat{w}'|=0$. Indeed,
if $\limsup_{t\to-\infty}|\hat{w}'|>0$, 
there exists a sequence $\{t_n\}_{n\in \mathbb{N}}$ satisfying $t_n\to -\infty$ as $n\to \infty$ such that $\hat{w}''(t_n)=0$ and $\hat{w}(t_n)\to C$, $\hat{w}'(t_n)\to 0$ as $n\to \infty$, which contradicts the fact that $\hat{w}$ satisfies $\eqref{pw}$ and $C>0$. Therefore, by using $\eqref{pw}$ again, we have 
\begin{equation*}
   \lim_{t\to -\infty} \hat{w}''(t)= -\theta(1-\theta)((C+1)^p-(C+1))<0,
\end{equation*}
which contradicts that $\hat{w}'(t)\to 0$ as $t\to -\infty$. Thus, we get $t_1>-\infty$.

Then, since $L$ is non-increasing, it follows that $0\le L(t_1)\le L(t)$ for $t<t_1$ and $w(t_1)>0$. Moreover, since $\hat{w}$ satisfies \eqref{pw}, we have $\hat{w}''(t_1)<0$. Therefore,
we verify that if $t<t_1$ and $w(s)>-1$ for all $s>t$, then we have
$\hat{w}'(t)>0$ and $\hat{w}(t)<\hat{w}(t_1)$. In addition, when $p=p_c$, since $\hat{w}(t)>-1$ and $L(t)\ge 0$
for all $t\in \mathbb{R}$, we get $\lim_{t\to-\infty} \hat{w}(t)=-1$. On the other hand, when $p<p_c$, since $L$ is decreasing, we have $|\hat{w}'|(t)>c>0$
for some $c>0$ for all $t<t_1$ satisfying $-1\le w(t)\le 0$. Therefore, we get the result.
\end{proof}

\begin{proof}[Proof of Theorem \ref{diagramthp} and Theorem \ref{mainthp} \rm{(i)}]
We first show Theorem \ref{diagramthp}. Assume that $v(r,\beta)$ is a solution of \eqref{vp} with $\beta>0$. Then, it follows that
$\hat{w}(t-\frac{\log \beta}{\theta})=v(r,\beta)W^{-1}(r)-1$ with $t=\log(-\log (r/e))$. Thus, by Proposition \ref{importantprop}, there exists $\beta^{*}\in (0,\infty]$ such that $v(r,\beta)>0$ for $r\in (0,1]$ if and only if $0<\beta<\beta^{*}$. Proposition \ref{importantprop} also tells us that $\beta^{*}=\infty$ if and only if $p_c\le p$. Thus, for $0<\beta<\beta^{*}$, we deduce that $(\lambda(\beta), u(r,\alpha(\beta))):=(v^{p-1}(1,\beta), v^{-1}(1,\beta)v(r, \beta)-1)$ is a radial solution of \eqref{gelfand} with $f(u)=(1+u)^p$ and $\alpha(\beta)=\lambda(\beta)^{-\frac{1}{p-1}}\beta-1$. On the other hand, as mentioned at the beginning of this section,
every radial solution of \eqref{gelfand} with $f(u)=(1+u)^p$ is parameterized by $\beta$ for some $\beta\in (0,\beta^{*})$. 
Moreover, thanks to Corollary \ref{formup}, we deduce that every solution $u(r,\alpha(\beta))\in H^1_{0}(B_1)$ and $u$ satisfies \eqref{gelfand} in the weak sense. In addition, thanks to Lemma \ref{apriorilemmap} and the fact that $\lambda(\beta)=v^{p-1}(1,\beta)$, we obtain $\lambda(\beta)\le \lambda^{*}$ with some $\lambda^{*}>0$ depending only on $k$ and $p$. As a result, we verify that the bifurcation curve is emanating from $(0,0)$. Finally, the analyticity of the bifurcation curve follows from the analyticity of $f(u)=(1+u)^p$. Therefore, we get the result.

We now prove Theorem \ref{mainthp} (i). Let us denote by $\cdot$ the differentiation with respect to $\beta$. Then, for $0<\beta<\beta^{*}$, we have
\begin{equation*}
\dot{\lambda}=(p-1)v^{p-2}(1,\beta)\dot{v}(1,\beta)=-\frac{p-1}{\beta\theta}W(1)v^{p-2}(1,\beta)\hat{w}'\left(-\frac{\log \beta}{\theta}\right).
\end{equation*}
Therefore, by using Proposition \ref{importantprop} again, we verify that there exists some $0<\beta_0<\beta^{*}$ such that $\dot{\lambda}(\beta)>0$ in $(0,\beta_0)$, $\dot{\lambda}(\beta_0)=0$, and $\dot{\lambda}(\beta_0)<0$ in $(\beta_0,\beta^{*})$. Moreover, we deduce that for any fixed $r\in [0,1]$, $v(r,\beta)$ is non-decreasing with respect to $\beta$ in $(0,\beta_0]$. 

Thus, it suffices to prove $\dot{\alpha}(\beta)>0$ for all $0<\beta<\beta^{*}$. We prove the assertion by contradiction. Thus, we assume that $\dot{\alpha}(\beta_1)\le 0$ for some $\beta_1\in (0,\beta^{*})$. 
Since $\alpha=\lambda^{-\frac{1}{p-1}}\beta-1$, we get $0<\beta_1<\beta_0$. By differentiating \eqref{vp} with respect to $\beta$, we have
\begin{equation*}
\left\{
\begin{alignedat}{4}
 -\Delta \dot{v}&=pV_k(|x|)v^{p-1}\dot{v}\hspace{4mm} \text{in } B_1,\\
\dot{v}(0)&=1, \hspace{4mm} \dot{v}(1)=\frac{1}{p-1}\lambda^{\frac{2-p}{p-1}}\dot{\lambda},\hspace{4mm}\dot{v}\in C^{0}[0,1]\cap C^{2}_{\mathrm{loc}}(0,1]\cap H^1(B_1).
\end{alignedat}
\right.
\end{equation*}
We first fix $\beta=\beta_0$. Then, since $\dot{\lambda}(\beta_0)=0$ and the fact that $v(r,\beta)$ is non-decreasing with respect to $\beta$ in $(0,\beta_0)$, we verify that $\dot{v}(r,\beta_0)$ is a positive eigenfunction for the operator $-\Delta_{D}-pV_{k} v^{p-1}$ corresponding to the eigenvalue $0$. Therefore, we deduce that $0$ is the first eigenvalue and thus $v(r,\beta_0)$ is stable. Since $v(r,\beta)$ is non-decreasing with respect to $\beta$, we verify that $v(r,\beta_1)$ is stable. Then, we denote  
$\alpha=\alpha(\beta_1)$, $\lambda=\lambda(\beta_1)$, and $u=u(r,\alpha(\beta_1))$. By differentiating \eqref{gelfand} with respect to $\beta$, we have
\begin{equation*}
\left\{
\begin{alignedat}{4}
 -\Delta \dot{u}&=\lambda pV_k(|x|)(1+u)^{p-1}\dot{u}+ \dot{\lambda}V_k(|x|)(1+u)^p\hspace{4mm} \text{in } B_1,\\
\dot{u}(0)&=\dot{\alpha}, \hspace{4mm} \dot{u}(1)=0,\hspace{4mm}\dot{u}\in C^{0}[0,1]\cap C^{2}_{\mathrm{loc}}(0,1]\cap H^1(B_1).
\end{alignedat}
\right.
\end{equation*}
Hence, by a similar argument to that in the proof of Lemma \ref{apriorileme} if $\dot{\alpha}=0$, we deduce that there exists $0<r_0\le 1$ such that $\dot{u}(r)<0$ in $(0,r_0)$ and $\dot{u}(r_0)=0$. We define 
\begin{equation*}
    \hat{u}(r)=
\begin{cases}
    \dot{u}(r) &\text{if $0\le r<r_0$,}\\
    0              &\text{otherwise.}
\end{cases}
\end{equation*}
Since $\dot{\lambda}>0$, we have 
\begin{align*}
    \int_{B_{1}}|\nabla \hat{u}|^2\,dx&= \int_{B_{1}}\lambda pV_k(|x|)(1+u)^{p-1}\hat{u}^2\,dx+ \dot{\lambda}\int_{B_{1}}V_k(|x|)(1+u)^{p}\hat{u}\,dx\\
    &<\int_{B_{1}}\lambda p V_k(|x|)(1+u)^{p-1}\hat{u}^2\,dx.
\end{align*}
On the other hand, since $v(r,\beta_1)$ is stable, we have 
\begin{align*}
\int_{B_{1}}\lambda p V_k(|x|)(1+u)^{p-1}\hat{u}^2\,dx&=\int_{B_{1}}p V_k(|x|){v(r,\beta_1)}^{p-1}\hat{u}^2\,dx\\
&\le \int_{B_{1}}|\nabla \hat{u}|^2\,dx, 
\end{align*}
which is a contradiction.
\end{proof}

\begin{proof}[Proof of Theorem \ref{singularth} \rm{(ii)} \textit{and Theorem \ref{mainthp}} \rm{(ii)}]
Assume that $p>p_c$. We first prove the uniqueness of a singular solution.
Let $(\lambda_{*}, U_{*})$ be a radial singular solution of \eqref{gelfand} for $f(u)=(1+u)^p$. 
Then, $v:=\lambda_{*}^{\frac{1}{p-1}}(U_{*}-1)$ is a singular solution of \eqref{apriorieqp} satisfying $v(1)=\lambda_{*}^{\frac{1}{p-1}}$. Moreover, we define $w(t):=v(r)W^{-1}(r)-1$ with $t=\log(-\log (r/e))$. Thanks to Lemma \ref{apriorilemmap}, Lemma \ref{lowerlemp}, and Theorem \ref{diagramthp}, we have $w>-1$ in $t\in \mathbb{R}$ and 
\begin{equation*}
-1<\limsup_{t\to\infty} w(t)\le C,
\end{equation*}
where $C>0$ is depending only on $k$ and $p$. Therefore, by \cite[Lemma 3.2]{Mi2020}, we get $w(t)\to 0$ as $t\to\infty$. Moreover, since the real parts of the associated eigenvalues of \eqref{lineap} are positive, we get $w(t)=0$. In particular, since $v(1)=W(1)$, it follows that $(\lambda_{*},U_{*})=(\theta(1-\theta), (\theta(1-\theta))^{-\frac{1}{p-1}}W+1)$ with $\theta=\frac{k}{p-1}$. Moreover, we can confirm that $U_{*}:=((\theta(1-\theta))^{-\frac{1}{p-1}}W+1)\in H^1_{0}(B_1)$ if and only if $p>p_c$.

Next, we show that the bifurcation curve converges to the singular solution. Let $(\lambda,u)=(\lambda(\beta), u(r,\alpha(\beta))$ be a radial solution of \eqref{gelfand} for $f(u)=(1+u)^p$. We define
$v:=\lambda^{\frac{1}{p-1}}(u-1)$ and $w(t):=v(r)W^{-1}(r)-1$ with $t=\log(-\log (r/e))$. In addition, we define $\hat{w}(s):=w(t)=w(t,\beta)$ with $s=t-\frac{\log \beta}{\theta}$. Then, we remind that $\hat{w}$ is the unique solution of \eqref{pw} with the initial condition \eqref{initialp}. Moreover, we remind that $W^{-1}(r)v(r,\beta)=\hat{w}(t-\frac{\log \beta}{\theta})+1$. 
Hence, thanks to Proposition \ref{importantprop},
for each $\varepsilon>0$, we obtain
$|v(r,\beta)|, |v'(r,\beta)|<C(\varepsilon)$
for all $r\in[\varepsilon, 2]$ and $\beta>1$, where $C(\varepsilon)>0$ is depending only on $\varepsilon$, $k$ and $p$. In addition, for each $\varepsilon>0$, there exists some $\beta(\varepsilon)$ depending only on $\varepsilon$, $k$ and $p$ such that $c<v(r,\beta)W^{-1}(r)<C$
for all $r\in[\varepsilon, 2]$ and $\beta>\beta(\varepsilon)$, where $0<c<C$ is depending only on $k$ and $p$. Therefore, by the elliptic regularity theory (see \cite{gil}), Arzel\`a-Ascoli theorem, and a diagonal argument, there exist a sequence $\{\beta_n\}_{n\in \mathbb{N}}$ and a singular solution $V\in C^2(0,1]$ of \eqref{vp} such that $\beta_n\to\infty$ and $v(r,\beta_n)\to V$ in $C^2_{\mathrm{loc}}(0,e]$ as $n\to\infty$. By the uniqueness of a singular solution, it follows that $V=W$ and thus we get the result.

Finally, we prove the oscillation of the bifurcation curve. Thanks to the above argument, we have $\hat{w}(s)\to 0$ as $s\to-\infty$. Since
\begin{equation*}
\lambda(\beta)=v^{p-1}(1,\beta)=\theta(1-\theta)(\hat{w}(-\theta^{-1}\log \beta)+1)^{p-1},
\end{equation*}
by Proposition \ref{oscup}, we deduce that $\lambda(\beta)$ turns around $\theta(1-\theta)$ infinitely many times when $p_c<p<p^{+}_{\mathrm{JL}}$. 
\end{proof}

\subsection{Stability of singular solutions}
We first prove Theorem \ref{morseth} (ii).
\begin{proof}[Proof of Theorem \ref{morseth} 
 \rm{(ii)}] For $\varphi\in C^{0,1}_{0}(B_1)$, we get
    \begin{align*}
   \mathcal{Q}_{U_{*}}(\varphi):=&\int_{B_1}|\nabla \varphi|^2\,dx-\int_{B_1} p\lambda_{*}V_k(|x|)(1+U_{*})^{p-1}\varphi^2\,dx\\
   &=\int_{B_1}|\nabla \varphi|^2\,dx-\int_{B_1}pV_k(|x|)W^{p-1}\varphi^2\,dx\\
   &=\int_{B_1}|\nabla \varphi|^2\,dx-\frac{kp}{p-1}\left(1-\frac{k}{p-1}\right)\int_{B_1}\frac{1}{|x|^2(\log(e/|x|))^2}\varphi^2\,dx.
\end{align*}
Since
\begin{equation*}
    \frac{kp}{p-1}\left(1-\frac{k}{p-1}\right)>\frac{1}{4} \hspace{4mm}\iff \hspace{4mm}p^{-}_{\mathrm{JL}}<p<p^{+}_{\mathrm{JL}},
\end{equation*}
thanks to Proposition \ref{critical prop}, we get the result.
\end{proof}
\begin{remark}
\label{strp}
\rm{By a similar method to that in the proof of Theorem \ref{morseth} (ii), we deduce that the singular solution $W$ is stable in $B_e$ if and only if $p_s<p\le p^{-}_{\mathrm{JL}}$ or $p^{+}_{\mathrm{JL}}\le p$. Moreover, by Theorem \ref{diagramthp} and Theorem \ref{singularth} (ii), we deduce that when $p_c<p$, we have $W\in H^1(B_1)$ and $v(r,\beta)>0$ for any $r\in(0,1]$, $\beta>0$.}
\end{remark}
Thanks to Remark \ref{strp}, we obtain the following separation result by a similar argument to that in the proof of Proposition \ref{sepae}.

\begin{proposition}
\label{sepap}
We assume that $p_{\mathrm{JL}}^{+}\le p$. Let $0<\beta<\gamma$ and $v(r,\beta)$ be the solution of \eqref{vp}. Then,
\begin{itemize}
    \item[{\rm{(i)}}] $v(r,\beta)<W(r)$ in $(0,e)$.
    \item [\rm{{(ii)}}] $v(r,\beta)<v(r,\gamma)$ in $(0,e)$. 
\end{itemize}
\end{proposition}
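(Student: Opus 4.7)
The plan is to imitate the proof of Proposition \ref{sepae}, replacing the exponential estimate $e^W-e^v\le e^W(W-v)$ by the strict convexity inequality
\begin{equation*}
W^p - v^p < p\, W^{p-1} (W - v) \quad \text{on } \{W > v > 0\},
\end{equation*}
valid for $p>1$. The three inputs this needs are: (a) stability of $W$ in $B_e$, which holds for $p\ge p^{+}_{\mathrm{JL}}$ by Remark \ref{strp}; (b) strict positivity $v(r,\beta)>0$ on $(0,e)$ for every $\beta>0$, which follows from Proposition \ref{importantprop}(i) since $p\ge p_c$; and (c) $W\in H^1(B_{r_0})$ for any $r_0<e$, which is a direct computation using $\theta=k/(p-1)<1/2$ (equivalent to $p>p_c$), while $v(\cdot,\beta)\in H^1(B_{r_0})$ comes from Corollary \ref{formup}.

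For part (i), assume by contradiction there exists $r\in(0,e)$ with $v(r,\beta)\ge W(r)$. Since $W(r)\to\infty$ while $v(r,\beta)\to\beta$ as $r\to 0$, continuity produces a smallest such $r_0\in(0,e)$, satisfying $W(r_0)=v(r_0,\beta)$ and $W>v(\cdot,\beta)>0$ on $(0,r_0)$. Defining $\hat v:=W-v(\cdot,\beta)$ on $B_{r_0}$ and extending by zero gives $\hat v\in H^1_0(B_e)$ by (c) and the vanishing trace at $\partial B_{r_0}$. Subtracting the equations for $W$ and $v(\cdot,\beta)$, multiplying by $\hat v$, and integrating by parts gives
\begin{equation*}
\int_{B_{r_0}}|\nabla \hat v|^2\,dx=\int_{B_{r_0}}V_k(|x|)\bigl(W^p-v^p\bigr)\hat v\,dx<\int_{B_{r_0}}V_k(|x|)\,pW^{p-1}\hat v^2\,dx,
\end{equation*}
with the strict inequality from strict convexity on $\{W>v>0\}$, a set of full measure in $B_{r_0}$. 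Combining with the stability of $W$ in $B_e$ bounds the right-hand side by $\int_{B_{r_0}}|\nabla \hat v|^2\,dx$, yielding the contradiction.

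For part (ii), part (i) together with the monotonicity of $s\mapsto ps^{p-1}$ yields $\mathcal{Q}_{v(\cdot,\gamma)}(\xi)\ge \mathcal{Q}_W(\xi)\ge 0$ for all $\xi\in H^1_0(B_e)$, so $v(\cdot,\gamma)$ is itself stable in $B_e$. Then the same contradiction argument with $W$ replaced by $v(\cdot,\gamma)$ applies: since $v(0,\gamma)=\gamma>\beta=v(0,\beta)$, any failure of $v(r,\beta)<v(r,\gamma)$ on $(0,e)$ produces a first touching radius $r_0\in(0,e)$, and the analogous energy identity combined with the stability of $v(\cdot,\gamma)$ and strict convexity gives the contradiction. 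The one delicate step throughout is verifying $\hat v\in H^1_0(B_e)$; this is where the condition $p>p_c$ (automatic under $p\ge p^{+}_{\mathrm{JL}}$) enters via the $H^1$-integrability of $W$ at the origin.
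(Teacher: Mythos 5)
Your proposal is correct and follows essentially the same route as the paper, which proves this proposition by repeating the stability-plus-convexity contradiction argument of Proposition \ref{sepae} with the ingredients of Remark \ref{strp} (stability of $W$ for $p\ge p^{+}_{\mathrm{JL}}$, positivity of $v$, and $W\in H^1$ for $p>p_c$). Your only deviations are cosmetic: you invoke strict convexity of $s\mapsto s^p$ to reach the contradiction directly, where the paper uses the non-strict inequality and concludes $\hat v=0$.
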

\begin{proof}[Proof of Theorem \ref{mainthp} \rm{(iii)}]
Let us denote by $\cdot$ the differentiation with respect to $\beta$. By Proposition \ref{sepap}, we have $\dot{\lambda}\ge 0$ for all $\beta\in \mathbb{R}$. Thus, it suffices to prove $\dot{\alpha}(\beta)>0$ for all $\beta\in \mathbb{R}$. We prove the assertion by contradiction. Thus, we assume that $\dot{\alpha}(\beta_0)\le 0$ for some $\beta_0$. Since it satisfies $(\alpha+1)\lambda^{\frac{1}{p-1}}=\beta$, we get $\dot{\lambda}(\beta_0)>0$.
By differentiating \eqref{gelfand} with respect to $\beta$, we have
\begin{equation*}
\left\{
\begin{alignedat}{4}
 -\Delta \dot{u}&=\lambda pV_k(|x|)(1+u)^{p-1}\dot{u}+ \dot{\lambda}V_k(|x|)(1+u)^p\hspace{4mm} \text{in } B_1,\\
\dot{u}(0)&=\dot{\alpha}, \hspace{4mm} \dot{u}(1)=0,\hspace{4mm}\dot{u}\in C^{0}[0,1]\cap C^{2}_{\mathrm{loc}}(0,1]\cap H^1(B_1).
\end{alignedat}
\right.
\end{equation*}
Hence, by using a similar argument to that in the proof of Lemma \ref{apriorileme} provided $\dot{\alpha}(\beta_0)=0$, we deduce that there exists $0<r_0\le 1$ such that $\dot{u}(r,\beta_0)<0$ in $(0,r_0)$ and $\dot{u}(r_0,\beta_0)=0$. We define 
\begin{equation*}
    \hat{u}(r)=
\begin{cases}
    \dot{u}(r,\beta_0) &\text{if $0\le r<r_0$,}\\
    0              &\text{otherwise.}
\end{cases}
\end{equation*}
Since $\dot{\lambda}(\beta_0)>0$, we have 
\begin{align*}
    \int_{B_{1}}|\nabla \hat{u}|^2\,dx&= \int_{B_{1}}\lambda(\beta_0) pV_k(|x|)(1+u)^{p-1}\hat{u}^2+\dot{\lambda}(\beta_0)V_k(|x|)(1+u)^{p}\hat{u}\,dx\\
    &<\int_{B_{1}}\lambda(\beta_0) pV_k(|x|)(1+u)^{p-1}\hat{u}^2\,dx.
\end{align*}
On the other hand, by Proposition \ref{sepap}, we get
\begin{align*}
\int_{B_{1}}\lambda(\beta_0) pV_k(|x|)(1+u)^{p-1}\hat{u}^2\,dx&= \int_{B_{1}} pV_k(|x|)v^{p-1}\hat{u}^2\,dx\\
&\le \int_{B_{1}} pV_k(|x|)W^{p-1}\hat{u}^2\,dx\le \int_{B_{1}}|\nabla \hat{u}|^2\,dx, 
\end{align*}
which is a contradiction.
\end{proof}

Finally, we prove Theorem \ref{sepathm} (ii) and (iii).
\begin{proof}[Proof of Theorem \ref{sepathm} \rm{(ii)} \textit{and} \rm{(iii)}]
We first remark that thanks to Proposition \ref{importantprop}, we get (ii). Moreover,  the result (iii) follows from Proposition \ref{sepap} (ii) in the case $p_{\mathrm{JL}}\le p$.
Then, it remains the case $p_c<p<p_{\mathrm{JL}}$. 
For $0<\beta<\gamma$, we define 
$w(t,\beta)=v(r,\beta)W^{-1}(r)-1$ with $t=\log (-\log(r/e))$. Here, we remark that $w(t,\beta)>-1$ for all $\beta>0$ and 
$w(t,\beta)\to 0$ as $t\to -\infty$ for all $\beta>0$ by Proposition \ref{importantprop} and the proof of Theorem \ref{singularth} (ii). In addition, we define $w_0(t)=w(t,\gamma)-w(t,\beta)$. Then
$w_0$ satisfies
\begin{equation*}
    \frac{d^2}{dt^2}w_0 + (2\theta-1)\frac{d}{dt}w_0+\theta(1-\theta) g(t)w_0, \hspace{4mm} t\in \mathbb{R}
\end{equation*}
with
\begin{equation*}
    g(t)=\frac{(w(t,\gamma)+1)^p-(w(t,\beta)+1)^p}{w(t,\gamma)-w(t,\beta)}-1\in C^0(\mathbb{R}).
\end{equation*}
Then, by the fact that $w(t,\beta)\to 0$ as $t\to -\infty$ for all $\beta>0$, we obtain $\theta(1-\theta)g(t)\to k(1-\theta)$ as $t\to -\infty$. Moreover, we remind that all nontrivial solutions of \eqref{lineap} change sign infinitely many times provided 
$p_c<p<p_{\mathrm{JL}}$. Thus, by Lemma \ref{strumlem}, we get the result.
\end{proof}
\section{Appendix}
In this section, we first prove the following
\begin{lemma}
\label{nlem}
Let $N=2$, $k\le 0$, and $V_k$ is that in \eqref{pote}. Then, if $u\in C^2(0,1]$ is a non-negative radial function satisfying \eqref{gelfand} for some $\lambda>0$ and a positive function $f\in C^0[0,\infty)$. Then, $\lim_{r\to 0} u(r)=\infty$.
\end{lemma}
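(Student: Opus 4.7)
The plan is to argue by contradiction: assume $u$ remains bounded as $r \to 0^+$ and derive a contradiction via a weighted integral identity. In radial form, $(r u'(r))' = -\lambda r V_k(r) f(u(r)) < 0$, so $r u'$ is strictly decreasing on $(0, 1]$ and the limit $L := \lim_{r \to 0^+} r u'(r)$ exists in $(-\infty, +\infty]$. If $L > 0$, then $u'(r) \ge L/(2r)$ for $r$ near zero, and integrating would give $u(r) \to -\infty$, contradicting $u \ge 0$. Hence $L \le 0$, and since $r u'$ is decreasing, $r u'(r) \le L \le 0$ on $(0, 1]$, so $u$ is non-increasing and $M := \lim_{r \to 0^+} u(r) \in (0, +\infty]$ exists. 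I will assume $M < \infty$ for contradiction; then by positivity and continuity of $f$ on $[0, M]$ there exists $m > 0$ with $f(u(r)) \ge m$ on $(0, 1]$.

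The key step will be to establish the identity
\begin{equation*}
u(r) = (\log r) \, r u'(r) + \int_r^1 \lambda s V_k(s) f(u(s)) (-\log s) \, ds,
\end{equation*}
which I would derive by integrating $(s u'(s))' = -\lambda s V_k(s) f(u)$ from $s = r$ to $s = 1$ (using $u(1) = 0$), integrating once more, and applying Fubini to swap the order of the resulting double integral. Each term on the right is non-negative: the first because $\log r$ and $r u'(r)$ are both non-positive on $(0, 1]$; the second because $-\log s \ge 0$ on $(0, 1]$ and every other factor is positive. In particular $u(r) \ge \lambda m \int_r^1 s V_k(s) (-\log s) \, ds$.

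The last step is to show this lower bound tends to $+\infty$ as $r \to 0^+$ when $k \le 0$. With the substitution $t = -\log(s/e)$,
\begin{equation*}
\int_r^{1/e} s V_k(s) (-\log s) \, ds = \int_2^{-\log(r/e)} \frac{t - 1}{t^{2+k}} \, dt \ge \frac{1}{2} \int_2^{-\log(r/e)} t^{-(1+k)} \, dt,
\end{equation*}
using $t - 1 \ge t/2$ for $t \ge 2$. When $k \le 0$ one has $-(1+k) \ge -1$, so the integrand dominates $t^{-1}$ on $[2, \infty)$, and the integral diverges (at rate $\log(-\log(r/e))$ when $k = 0$, faster when $k < 0$). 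This contradicts $M < \infty$, so $M = +\infty$. The main technical point is the integral identity itself; once it is in place, the divergence computation is routine, and the exponent $2 + k$ in $V_k$ is tuned so that $k = 0$ is precisely the critical threshold.
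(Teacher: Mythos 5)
Your argument is correct and follows essentially the same route as the paper: both first establish $u'\le 0$, then assume $u$ bounded so that $f(u)\ge m>0$ near the origin, and derive a contradiction from the divergence of the twice-integrated quantity $\lambda s V_k(s) f(u(s))$, which is exactly where the hypothesis $k\le 0$ enters. The only difference is one of packaging: the paper integrates the pointwise bound $u'(r)\le m\lambda/(r\log(r/e))$ a second time, whereas you record the same double integration as an exact Fubini identity for $u(r)$; both reduce to the same divergent logarithmic integral.
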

\begin{proof}    
By a similar argument to that in the proof of Lemma \ref{apriorileme}, we have $u'\le 0$ in $(0,1)$. Assume to the contrary that
$\lim_{r\to 0} u(r)\to \alpha$ for some $\alpha>0$. We fix $0<r_1<r_2<1$ and define $m:=\inf\{f(t) ; 0\le t\le \alpha\}$.
Since $u$ satisfies \eqref{gelfand} and $u'(r_1)\le 0$, it follows for any $r_1<r<r_2$ that
\begin{equation*}
ru'(r)=r_1 u'(r_1)-\int^{r}_{r_1}\lambda sV_k(s)f(u(s))\,ds\le-\int^{r}_{r_1}\lambda sV_k(s)f(u(s))\,ds.
\end{equation*}
Letting $r_1\to 0$, we have
\begin{align*}
u'(r)\le -\frac{1}{r}\int^{r}_{0}\lambda sV_k(s)f(u(s))\,ds &\le  -\frac{m\lambda}{r} \int^{r}_{0} \frac{1}{s(-\log(s/e))^{2+k}}\,ds\\
&\le -\frac{m\lambda}{r} \int^{r}_{0} \frac{1}{s(-\log(s/e))^{2}}\,ds\\
&=\frac{m\lambda}{r\log(r/e)}.
\end{align*}
Integrating the above over $(\rho, r_2)$, we get 
\begin{equation*}
u(r_2)-u(\rho)\le m\lambda \log(-\log(r_2/e))- m\lambda \log (-\log(\rho/e)).
\end{equation*}
Thus, by letting $\rho\to 0$, we get a contradiction.
\end{proof}
Then, we prove the following type of Sturm's comparison theorem. 
\begin{lemma}
\label{strumlem}
Let $q\in \mathbb{R}$. We consider the following equations
\begin{equation*}
y''+ qy' + a(t)y = 0, \hspace{4mm}t\in \mathbb{R},
\end{equation*}
\begin{equation*}
z''+qz'+b(t)z = 0, \hspace{4mm} t\in \mathbb{R}.
\end{equation*}
We assume that there exists $l\in \mathbb{R}$ such that $a(t)>b(t)$ in $(-\infty,l)$. We assume in addition that a solution $z$ changes sign infinitely many times on $(-\infty,l)$. Then, any nontrivial solutions $y$ change sign infinitely many times on $(-\infty,l)$.
\end{lemma}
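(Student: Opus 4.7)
The plan is to reduce the statement to a classical Sturm comparison argument by multiplying both equations by the integrating factor $e^{qt}$, which puts them in self-adjoint form $(e^{qt}y')'+a(t)e^{qt}y=0$ and $(e^{qt}z')'+b(t)e^{qt}z=0$. Since $e^{qt}>0$, the hypothesis $a>b$ on $(-\infty,l)$ transfers directly to the coefficients of the self-adjoint forms.

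Concretely, I would work with the weighted Wronskian $W(t):=e^{qt}\bigl(y(t)z'(t)-y'(t)z(t)\bigr)$. A short computation, substituting $y''=-qy'-ay$ and $z''=-qz'-bz$, shows that the $q$-terms coming from differentiating the prefactor cancel those coming from the first-order parts of the ODEs, leaving the identity
\[
W'(t)=e^{qt}\bigl(a(t)-b(t)\bigr)y(t)z(t).
\]
This cancellation is the only real bookkeeping step and is what I expect to be the main, and essentially minor, technical point.

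With the identity in hand, I would argue by contradiction to show that any fixed nontrivial solution $y$ must vanish strictly between any two consecutive zeros of $z$ in $(-\infty,l)$. Take such consecutive zeros $t_1<t_2\le l$ and suppose $y$ has no zero in $(t_1,t_2)$. After changing signs if necessary, one may assume $y\ge 0$ on $[t_1,t_2]$ and $z>0$ on $(t_1,t_2)$; uniqueness for the linear ODE forces $z'(t_1)>0>z'(t_2)$, so that $W(t_1)=e^{qt_1}y(t_1)z'(t_1)\ge 0$ and $W(t_2)=e^{qt_2}y(t_2)z'(t_2)\le 0$, giving $W(t_2)-W(t_1)\le 0$. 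The identity for $W'$ integrated over $[t_1,t_2]$ yields $W(t_2)-W(t_1)=\int_{t_1}^{t_2}e^{qt}(a-b)yz\,dt>0$, a contradiction. Since the hypothesis supplies infinitely many zeros of $z$ accumulating at $-\infty$ inside $(-\infty,l)$, one obtains infinitely many disjoint open intervals each containing a zero of $y$; each zero of $y$ is simple by uniqueness, hence a genuine sign change, which gives the conclusion.
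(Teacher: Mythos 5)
Your proposal is correct and follows essentially the same route as the paper: the same integrating factor $e^{qt}$, the same weighted Wronskian $W=e^{qt}(yz'-y'z)$ with $W'=e^{qt}(a-b)yz$, and the same sign contradiction at consecutive zeros of $z$ using $z'(t_1)\ge 0\ge z'(t_2)$. The only differences are cosmetic (you phrase the computation via the self-adjoint form and note the simplicity of zeros explicitly), so nothing further is needed.
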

\begin{proof}
We assume that there exist $t_1<t_2<l$ such that $z(t_1)=z(t_2)=0$, $z(t)>0$ in $(t_1,t_2)$ and $y$ remains positive/negative in $[t_1,t_2]$.
Without loss of generality, we suppose that $y>0$ in $[t_1,t_2]$. Since 
\begin{equation*}
    [e^{qt}(z'y-y'z)]'=e^{qt}y(t)z(t)(a(t)-b(t))>0,
\end{equation*}
by integrating the above over $(t_1,t_2)$, we get
\begin{equation*}
    e^{qt_2}z'(t_2)y(t_2)- e^{qt_1}z'(t_1)y(t_1)>0.
\end{equation*}
Since $z'(t_2)\le 0$ and $z'(t_1)\ge 0$, we get a contradiction.
\end{proof}

\section*{Acknowledgments}
The author would like to thank Professor Michiaki Onodera for his valuable advice to clarify the presentation.
\bibliographystyle{plain}
\bibliography{bifurcation_2_weight}

\end{document}